\documentclass[10pt]{article}
\usepackage[margin=1in]{geometry} 
\usepackage{libertine}
\usepackage[libertine]{newtxmath}

\usepackage[english]{babel} 
\usepackage{amsmath, amsfonts, amsthm, amssymb} 
\usepackage[scr=boondoxo,cal=euler]{mathalfa} 
\usepackage{tikz-cd} 
\usepackage{indentfirst} 
\usepackage{fancyhdr} 
\usepackage{graphicx} 
\usepackage{enumitem} 
\usepackage{microtype} 
\usepackage{pdfpages} 
\usepackage{centernot} 
\usepackage{ragged2e} 
\usepackage{pinlabel}
\usepackage{caption}
\usepackage[hidelinks]{hyperref} 

\allowdisplaybreaks 
\usetikzlibrary{decorations.pathmorphing}

\theoremstyle{plain}
\newtheorem{thm}{Theorem}[section]
\newtheorem*{thm*}{Theorem}
\newtheorem{lem}[thm]{Lemma}
\newtheorem*{lem*}{Lemma}
\newtheorem{cor}[thm]{Corollary}
\newtheorem*{cor*}{Corollary}
\newtheorem{prop}[thm]{Proposition}
\newtheorem*{prop*}{Proposition}

\newtheorem*{conj*}{Conjecture}

\newtheorem*{ques*}{Question}

\newtheorem{obs}[thm]{Observation}

\theoremstyle{definition}
\newtheorem{df}[thm]{Definition}
\newtheorem*{df*}{Definition}
\newtheorem*{dfs*}{Definitions}

\newtheorem*{exercise*}{Exercise}

\theoremstyle{remark}
\newtheorem{rem}[thm]{Remark}
\newtheorem*{rem*}{Remark}

\newtheorem*{example*}{Example}

\usepackage{etoolbox}
\patchcmd{\thmhead}{(#3)}{#3}{}{}
\makeatletter
\g@addto@macro\bfseries{\boldmath}
\makeatother

\newcommand{\cl}[1]{\mathcal{#1}}
\newcommand{\fk}[1]{\mathfrak{#1}}

\newcommand{\sr}[1]{\mathscr{#1}}
\newcommand{\Z}{\mathbf{Z}} 
\newcommand{\R}{\mathbf{R}} 
\newcommand{\C}{\mathbf{C}} 

\newcommand{\x}{\times}

\newcommand{\dt}{\mathrm{d}t}

\renewcommand{\sl}{\fk{sl}}

\newcommand{\ol}[1]{\overline{#1}}

\DeclareMathOperator{\Tr}{Tr}

\DeclareMathOperator{\Stab}{Stab}

\DeclareMathOperator{\Tor}{Tor}
\DeclareMathOperator{\pr}{pr}

\DeclareMathOperator{\Id}{Id}

\DeclareMathOperator{\SU}{SU}
\DeclareMathOperator{\U}{U}

\DeclareMathOperator{\BU}{BU}
\DeclareMathOperator{\PU}{PU}
\DeclareMathOperator{\SO}{SO}

\DeclareMathOperator{\Kh}{Kh}

\DeclareMathOperator{\KR}{KR}
\DeclareMathOperator{\KRC}{KRC}

\DeclareMathOperator{\Sym}{Sym}

\DeclareMathOperator{\Gr}{Gr}

\usepackage{graphicx}
\usepackage{subcaption}
\usepackage{mathtools}
\usepackage{xcolor}
\usepackage{colortbl}
\usepackage{makecell}
\usepackage{wrapfig}
\usetikzlibrary{calc,fit,backgrounds}
\usetikzlibrary{decorations.markings}
\tikzset{
	neg/.style={postaction=decorate,
	decoration={markings,
	mark=at position 0.05cm with \node{$\circ$};
	}}
}
\usepackage{pinlabel}
\usepackage{nicematrix}

\title{Link homology and loop homology}
\author{Joshua Wang}
\date{}

\begin{document}
\maketitle

\begin{abstract}
	We compute the $k$-colored $\sl(N)$ homology of the torus knot $T(2,2m+1)$, and we show that it stabilizes as $m\to\infty$ to the integral homology of the free loop space of the complex Grassmannian $\Gr(k,N)$.  In particular, when $k = 1$ and $N = 2$, we observe that the Khovanov homology of $T(2,2m+1)$ stabilizes to the homology of the free loop space of the $2$-sphere. 
\end{abstract}

\section{Introduction}\label{sec:introduction}

The purpose of this paper is to present a novel connection between link homology and loop homology, afforded by new computations of link invariants. We first explain the simplest instance of the connection, for which new computations are not needed. Consider the Khovanov homology groups \cite{MR1740682} of the torus knot $T(2,2m+1)$, shown in Figure~\ref{fig:KhovanovT2m}. After a downward grading shift by $2m-1$, they stabilize as $m \to \infty$ to a bigraded abelian group known as the stable Khovanov homology of $T(2,\infty)$, shown in Figure~\ref{fig:StableKhT2infty}.

\begin{figure}[!ht]
	\centering
	\vspace{-10pt}\[
		\def\arraystretch{1}
		\scalebox{1.0}{
		\begin{tabular}{ c!{\vrule width 0.9pt}c|c|c|c|c|c| }
		\hline
		$\!15\!$ & $\phantom \Z$ & & & & & \\
		\hline
		$\!13\!$ & & $\!\:\phantom{\Z_2}\!\!$ & & & & \\
		\hline
		$\!11\!$ & & $\phantom \Z$ & $\phantom \Z$ & & & \\
		\hline
		$9$ & & & & $\!\:\phantom{\Z_2}\!\!$ & & \\
		\hline
		$7$ & & & & $\phantom \Z$ & $\phantom \Z$ & \\
		\hline
		$5$ & & & & & & $\!\:\phantom{\Z_2}\!\!$ \\
		\hline
		$3$ & & & & & & $\phantom \Z$\\
		\hline
		$1$ & & & & & & $\Z$\\
		\hline
		$\!\!-1\!\!$ & & & & & & $\Z$\\
		\Xhline{0.9pt}
		& $\!-5\!$ & $\!-4\!$ & $\!-3\!$ & $\!-2\!$ & $\!-1\!$ & $0$
		\end{tabular}
		} \hspace{5pt} \scalebox{1.0}{
		\begin{tabular}{ c!{\vrule width 0.9pt}c|c|c|c|c|c|c|c|c|c|c }
		\hline
		$\!15\!$ & & $\!\:\phantom{\Z_2}\!\!$ & & & & \\
		\hline
		$\!13\!$ & & $\phantom \Z$ & $\phantom \Z$ & & &\\
		\hline
		$\!11\!$ & & & & $\!\:\phantom{\Z_2}\!\!$ & & \\
		\hline
		$9$ & & $\phantom \Z$ & $\Z$ & & & \\
		\hline
		$7$ & & & & $\!\:\Z_2\!\!$ & & \\
		\hline
		$5$ & & & & $\Z$ & &\\
		\hline
		$3$ & & & & & & $\Z$\\
		\hline
		$1$ & & & & & & $\Z$\\
		\hline
		$\!\!-1\!\!$ & $\phantom \Z$ & & & & & \\
		\Xhline{0.9pt}
		& $\!-5\!$ & $\!-4\!$ & $\!-3\!$ & $\!-2\!$ & $\!-1\!$ & $0$
		\end{tabular}
		} \hspace{5pt} \scalebox{1.0}{
		\begin{tabular}{ c!{\vrule width 0.9pt}c|c|c|c|c|c|c|c|c|c|c }
		\hline
		$\!15\!$ & $\Z$ & & & & & \\
		\hline
		$\!13\!$ & & $\!\:\Z_2\!\!$ & & & & \\
		\hline
		$\!11\!$ & & $\Z$ & $\Z$ & & & \\
		\hline
		$9$ & & & & $\!\:\Z_2\!\!$ & & \\
		\hline
		$7$ & & & & $\Z$ & &\\
		\hline
		$5$ & & & & & & $\Z$\\
		\hline
		$3$ & & & & & & $\Z$\\
		\hline
		$1$ & & & & & & \\
		\hline
		$\!\!-1\!\!$ & & & & & & \\
		\Xhline{0.9pt}
		& $\!-5\!$ & $\!-4\!$ & $\!-3\!$ & $\!-2\!$ & $\!-1\!$ & $0$
		\end{tabular}
		}\vspace{-10pt}
	\]
		\captionsetup{width=.8\linewidth}
		\caption{The Khovanov homology groups of the unknot $T(2,1)$, the trefoil $T(2,3)$, and the cinquefoil $T(2,5)$. We use the Khovanov--Rozansky $\sl(2)$ link homology grading conventions \cite{MR2391017}, with the $q$-grading vertical and the $t$-grading horizontal.}
		\label{fig:KhovanovT2m}
\end{figure}

\begin{wrapfigure}{r}{0.45\textwidth}
	\centering
	\vspace{-25pt}\[
		\def\arraystretch{1.00}
		\scalebox{1.0}{
		\begin{tabular}{ c!{\vrule width 0.9pt}c|c|c|c|c|c|c|c|c| }
		\hline
		$\!16\!$ & \!\raisebox{-2pt}{$\smash{\ddots}$}\! & & & & & & & \\
		\hline
		$\!14\!$ & \!\raisebox{-2pt}{$\smash{\ddots}$}\! & $\!\:\Z_2\!\!$ & & & & & &\\
		\hline
		$\!12\!$ & & $\Z$ & $\Z$ & & & & & \\
		\hline
		$\!10\!$ & & & & $\!\:\Z_2\!\!$ & & & & \\
		\hline
		$8$ & & & & $\Z$ & $\Z$ & & & \\
		\hline
		$6$ & & & & & & $\!\:\Z_2\!\!$ & & \\
		\hline
		$4$ & & & & & & $\Z$ & & \\
		\hline
		$2$ & & & & & & & & $\Z$\\
		\hline
		$0$ & & & & & & & & $\Z$\\
		\Xhline{0.9pt}
		& $\!-7\!$ & $\!-6\!$ & $\!-5\!$ & $\!-4\!$ & $\!-3\!$ & $\!-2\!$ & $\!-1\!$ & $0$
		\end{tabular}
		}\vspace{-15pt}
	\]
	\captionsetup{width=.8\linewidth}
	\caption{The stable Khovanov homology of $T(2,\infty)$.}
	\label{fig:StableKhT2infty}
	\vspace{-10pt}
\end{wrapfigure}

The Khovanov homology groups of the torus link $T(n,m)$, with a suitable $q$-grading shift, also stabilize as $m\to\infty$ to a limit known as the stable Khovanov homology of $T(n,\infty)$ \cite{MR2492301}. More precisely, there are chain-level maps defined between the relevant complexes underlying these homology groups, and the stable limit is a colimit at the chain level. The stable Khovanov homology of $T(n,\infty)$ is also the invariant of the unknot in the Cooper--Krushkal categorification of the $n$-colored Jones polynomial \cite{MR2901969}, and these stable groups are the subject of the Gorsky--Oblomkov--Rasmussen conjecture \cite{MR3171092}. Although they play an important role in the categorified representation theory of the quantum group $U_q(\sl(2))$, they have only been computed explicitly for $n \in \{2,3\}$. 

\begin{wrapfigure}{r}{0.32\textwidth}
	\centering
	\vspace{-10pt}\[
		\begin{tabular}{ c!{\vrule width 0.9pt}c| }
		\hline
		$2$ & $\Z$\\
		\hline
		$0$ & $\Z$\\
		\Xhline{0.9pt}
		& $0$
		\end{tabular} \hspace{20pt} \begin{tabular}{ c!{\vrule width 0.9pt}c|c| }
		\hline
		$4$ & $\Z$ &\\
		\hline
		$2$ & & $\!\:\Z_2\!\!$\\
		\hline
		$0$ & & $\Z$\\
		\Xhline{0.9pt}
		& $\!-1\!$ & $0$
		\end{tabular} \vspace{-10pt}
	\]
	\captionsetup{width=.9\linewidth}
	\caption{Two types of summands.}
	\label{fig:TwoTypesOfSummands}
	\vspace{-10pt}
\end{wrapfigure}

Let us examine the stable Khovanov homology of $T(2,\infty)$ more carefully. Up to grading shifts, only two types of direct summands appear, shown in Figure~\ref{fig:TwoTypesOfSummands}. The first type of summand only appears once, while the second type of summand appears infinitely many times with grading shifts $t^{-2l}q^{4l}$ for $l > 0$. Next, for either summand, consider collapsing the bigrading to a single grading by setting $t = q$. The first summand yields the cohomology of $S^2$, while the second summand yields the cohomology of $\SO(3)$. Identifying these summands with the cohomology groups of $S^2$ and $\SO(3)$ goes back to observations of Kronheimer--Mrowka \cite{MR2860345} and Jacobsson--Rubinsztein \cite{https://doi.org/10.48550/arxiv.0806.2902} in connection with meridian-traceless $\SU(2)$ representation varieties of knot groups \cite{MR1158339}. With these interpretations of the summands in mind, we tentatively observe that the stable Khovanov homology of $T(2,\infty)$ is isomorphic to the cohomology groups of $S^2 \sqcup \SO(3) \sqcup \SO(3) \sqcup \cdots$ after certain grading shifts and collapses. 

The key new observation is that the space $S^2 \sqcup \SO(3) \sqcup \SO(3) \sqcup \cdots$ arises in a different context; it is precisely the space of closed geodesics on the round $2$-sphere.
The copy of $S^2$ is the space of constant loops, and the\begin{wrapfigure}{r}{0.3\textwidth}
	\centering
	\vspace{-5pt}
	\includegraphics[width=.2\textwidth]{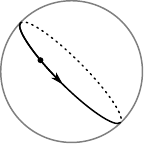}
	\captionsetup{width=.9\linewidth}
	\caption{A closed geodesic.}
	\label{fig:geodesicOnS2}
	\vspace{-5pt}
\end{wrapfigure} $l$th copy of $\SO(3)$ is the space of closed geodesics that go $l$ times around a great equator. For us, a closed geodesic is a map from $S^1$ to $S^2$ that satisfies the geodesic equation. We view the space of closed geodesics as a subspace of the free loop space of $S^2$, denoted by $LS^2$. It is the critical set of the energy functional $E\colon LS^2 \to \R$ given by $E(\gamma) = \frac12\int_{S^1} |\dot{\gamma}|^2 \,\dt$ which, for this metric, is a perfect Morse--Bott function \cite{MR649625}. In particular, the homology of $LS^2$ is isomorphic to the direct sum of the homology groups of the connected components of the space of closed geodesics, shifted by Morse index: \vspace{-5pt}\[
	H_*(LS^2) \cong H_*(S^2) \oplus \bigoplus_{l=1}^\infty H_{*-(2l-1)}(\SO(3)).\vspace{-5pt}
\]

\begin{obs}\label{obs:KhT2infandHLS2}
	The stable Khovanov homology of $T(2,\infty)$ coincides with the homology of the free loop space of $S^2$. 
\end{obs}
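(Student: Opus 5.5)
This observation amounts to comparing two explicitly known graded abelian groups, so the plan is to compute both sides and match them summand by summand. The comparison requires fixing a grading convention. I would collapse the bigrading of stable Khovanov homology to a single grading by $t = q$, up to an overall sign and shift. I would also pass between cohomology and homology using the universal coefficient theorem, since the torsion $\Z_2$ moves up one degree under that passage.

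\emph{Khovanov side.} First I would recall the standard computation of $\Kh(T(2,2m+1))$. Using the long exact sequence for crossing resolutions, or Khovanov's original computation, the groups are as in Figure~\ref{fig:KhovanovT2m}. Concretely, there is one copy of the unknot-like summand $\Z_{(0,-1+2m)}\oplus\Z_{(0,1+2m)}$. In addition, for $l=1,\dots,m$, there is a ``knight move'' summand consisting of $\Z$ in bidegree $(-2l+1,4l-1)$, $\Z_2$ in bidegree $(-2l,4l+1)$, and $\Z$ in bidegree $(-2l,4l+3)$, all shifted by $2m$. Shifting down by $2m-1$ as in the introduction, the summands are independent of $m$ in low degrees. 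Passing to the colimit therefore gives one copy of the first summand in Figure~\ref{fig:TwoTypesOfSummands} and, for each $l\ge1$, one copy of the second summand shifted by $t^{-2l}q^{4l}$. This is Figure~\ref{fig:StableKhT2infty}. The only point that needs care is that the stabilization maps are isomorphisms on these summands in a range growing with $m$. I would take this from the chain-level colimit description in \cite{MR2492301}.

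\emph{Loop space side.} Next I would use the perfect Morse--Bott decomposition of the energy functional quoted above:
\[
H_*(LS^2)\cong H_*(S^2)\oplus\bigoplus_{l\ge1}H_{*-(2l-1)}(\SO(3)),
\]
with $H_*(\SO(3)) = \Z,\Z_2,0,\Z$ in degrees $0,1,2,3$. The $S^2$ summand contributes $\Z$ in degrees $0$ and $2$. The $l$th $\SO(3)$ summand contributes $\Z$ in degree $2l-1$, $\Z_2$ in degree $2l$, and $\Z$ in degree $2l+2$.

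\emph{Matching.} Set the single grading equal to $q+t$, the collapse $t=q$ read additively. The first stable summand has $\Z$ at $(0,0)$ and $(0,2)$, giving degrees $0$ and $2$, which matches $H_*(S^2)$. The $l$th knight-move summand has $\Z$ at $(t,q)=(-2l-1,\,4l+4)$, $\Z_2$ at $(-2l,\,4l+2)$, and $\Z$ at $(-2l,\,4l)$. This gives degrees $2l+3$, $2l+2$, and $2l$, which is off from the loop-space degrees by a uniform shift by one. I would fix the normalization, for example by using $q+t-1$ on these summands or by checking against Figure~\ref{fig:StableKhT2infty} directly, so that the degrees become $2l-1,2l,2l+2$. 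This matches $H_{*-(2l-1)}(\SO(3))$ exactly, including the position of the $\Z_2$.

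The main obstacle is not mathematical depth but bookkeeping: choosing a single regrading that makes both types of summand line up simultaneously. I expect to resolve this by regrading each summand relative to its bottom generator, which is the constant-loop and Morse-index normalization. Because the statement is an observation, it is legitimate to phrase it as an isomorphism of graded groups after this explicitly specified collapse and shift, rather than as a naturally defined map.
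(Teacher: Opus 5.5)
Your decomposition into one $S^2$ piece and one $\SO(3)$ piece for each $l\ge 1$ is correct, and it is how the paper reads the observation at its most basic level. The grading match you then carry out, however, is false.

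Under the grading $q+t$, the $l$th knight-move summand has $\Z$ in degree $2l$, $\Z_2$ in degree $2l+2$, and $\Z$ in degree $2l+3$. This is the pattern $\Z,0,\Z_2,\Z$ of $H^*(\SO(3))$. By contrast, $H_{*-(2l-1)}(\SO(3))$ has the pattern $\Z,\Z_2,0,\Z$ in degrees $2l-1,\ldots,2l+2$. No shift turns one into the other. Your $q+t-1$ puts the $\Z_2$ in degree $2l+1$, not $2l$, so the claim ``including the position of the $\Z_2$'' fails. The correspondence you list, $2l+3,2l+2,2l\mapsto 2l-1,2l,2l+2$, is the reflection $x\mapsto 4l+2-x$, not a shift.

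A cleverer single collapse cannot rescue the match against homology either. For an affine regrading $at+bq+c$, the $S^2$ piece forces $(b,c)=(1,0)$ or $(-1,2)$. In the first case the $\Z_2$'s land in degrees $(4-2a)l+2$, which never enumerate $\{2,4,6,\ldots\}$ bijectively. In the second case the torsion forces $a=-3$, and then no generator lands in degree $1$, although $H_1(LS^2)\cong\Z$.

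The missing step is the homology/cohomology passage you flag at the outset but never apply. The $t=q$ collapse of each stable summand is a \emph{cohomology} group: the paper identifies the collapse of $\Tor_{H^*(\BU(2))}(H^*(\mathrm{B}J),\Z)$ with $H^*(\U(2)/J)$ via Gugenheim--May. There are two correct ways to finish.
\begin{itemize}
\item Compare with $H^*(LS^2)$. By universal coefficients applied to the Morse--Bott splitting, this is $H^*(S^2)\oplus\bigoplus_{l\ge1}H^{*-(2l-1)}(\SO(3))$. With this target, your normalization ($q+t$ on the $S^2$ piece, $q+t-1$ on the others) does match, torsion included.
\item Keep homology and use Poincar\'e duality on $\SO(3)$, i.e.\ negate the collapsed grading, as the paper does. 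Then Equations~(\ref{eq:LS2gradings}) and (\ref{eq:KhT2infgradings}) give shifts $s^{2l}$ versus $s^{-2l}$ on the $l$th summand. These agree only modulo $2N=4$ (Remark~\ref{rem:cyclicGrading}).
\end{itemize}
In both versions the statement is summand-wise with separately specified shifts, which is how the paper phrases it, not a single collapse of the bigrading.

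Separately, your finite-$m$ bidegrees are off. You put the two free generators of the $l$th knight move at $t=-2l+1$ and $t=-2l$. Figure~\ref{fig:KhovanovT2m} has them at $t=-2l$ and $t=-2l-1$, which is what you then correctly use in the stable range.
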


At the most basic level, the observation is simply that both sides are isomorphic to the homology or cohomology of $S^2 \sqcup \SO(3) \sqcup \SO(3) \sqcup \cdots$ with suitable shifts. To compare the gradings on the two sides more precisely, we first set $\mathbf{H}_*(LB) \coloneqq H_{*\,+\,\dim B}(LB)$ and $\mathbf{H}_*(B) \coloneqq H_{*\,+\,\dim B}(B)$ where $B$ is a connected (finite-dimensional) manifold. The first convention is standard in string topology because the Chas--Sullivan loop product \cite{chas1999stringtopology} takes the form of a graded-commutative map $\mathbf{H}_*(LB) \x \mathbf{H}_*(LB) \to \mathbf{H}_*(LB)$ with this grading shift. We use the grading shift notation $s^i\,\mathbf{H}_* \coloneqq \mathbf{H}_{*-i}$. With these conventions, we have \vspace{-5pt} \begin{equation}\label{eq:LS2gradings}
	\mathbf{H}_*(LS^2) \cong \mathbf{H}_*(S^2) \oplus \bigoplus_{l=1}^\infty s^{2l}\mathbf{H}_*(\SO(3)).\vspace{-5pt}
\end{equation}
Next, we explain how the bigradings on $H^*(S^2)$ and $H^*(\SO(3))$ in Figure~\ref{fig:TwoTypesOfSummands} actually arise from the standard $\U(2)$-actions on $S^2$ and $\SO(3)$. Each is a homogeneous space $\U(2)/J$ for a closed subgroup $J \subseteq \U(2)$. Its Borel equivariant cohomology $H^*_{\hspace{1pt}\smash{\U(2)}}(\U(2)/J)$ may be identified with $H^*(\mathrm{B}J)$ as a module over $H^*(\BU(2))$. Its ordinary cohomology $H^*(\U(2)/J)$ is not obtained from its equivariant cohomology by naively killing the action of $H^*(\BU(2))$ by forming the tensor product $H^*(\mathrm{B}J) \otimes_{H^*(\BU(2))} \Z$; instead, it is isomorphic to the torsion product $\Tor_{H^*(\BU(2))}(H^*(\mathrm{B}J),\Z)$ by a theorem of Gugenheim--May \cite{MR0394720}. More precisely, the singly-graded group $H^*(\U(2)/J)$ is obtained by collapsing the bigrading by $t = q$ on the torsion product, where $q$ is the internal grading on the modules and $t$ is the cohomological grading. We set $\mathbf{H}^{**}(\U(2)/J) \coloneqq \Tor_{H^*(\BU(2))}(H^*(\mathrm{B}J),\Z)$ and $t^iq^j\,\mathbf{H}^{**} \coloneqq \mathbf{H}^{(*-i)(*-j)}$. The bigraded groups $\mathbf{H}^{**}(S^2)$ and $\mathbf{H}^{**}(\SO(3))$ are precisely the two types of summands in Figure~\ref{fig:TwoTypesOfSummands}, so the stable Khovanov homology of $T(2,\infty)$ is \vspace{-5pt}\begin{equation}\label{eq:KhT2infgradings}
	\Kh(T(2,\infty)) \cong \mathbf{H}^{**}(S^2) \oplus \bigoplus_{l=1}^\infty t^{-2l}q^{4l} \mathbf{H}^{**}(\SO(3)).\vspace{-5pt}
\end{equation}
Equations~(\ref{eq:LS2gradings}) and (\ref{eq:KhT2infgradings}) provide a grading-refinement of Observation~\ref{obs:KhT2infandHLS2}. Also see Remark~\ref{rem:cyclicGrading} below.

The main result of this paper establishes the same relationship for the stable $k$-colored $\sl(N)$ homology of $T(2,\infty)$ and the homology of the free loop space of the complex Grassmannian $\Gr(k,N)$. The main work that enables this comparison is a complete computation of the $k$-colored $\sl(N)$ homology of $T(2,2m+1)$, which is new beyond the case of the trefoil \cite{MR4903257}. These richer knot invariants are challenging to compute, and our computations here are the main application of \cite{wang2025minimalrickardcomplexesbraids}.

To state our results, we summarize some facts about closed geodesics on $\Gr(k,N)$ equipped with the standard metric that makes it a Riemannian symmetric space. See section~\ref{sec:loop_homology_of_the_grassmannian} for these results, which are derived from \cite{MR649625}. \begin{itemize}[noitemsep]
	\item[$\bullet$] The connected components of the space of closed geodesics on $\Gr(k,N)$ may be indexed by the set of partitions with at most $\min(k,N-k)$ parts. We let $B_\lambda$ denote the connected component corresponding to $\lambda = (\lambda_1,\ldots,\lambda_r)$. Set $|\lambda| = \lambda_1 + \cdots + \lambda_r$, $|\lambda|_2 = (\lambda_1^2 + \cdots + \lambda_r^2)^{1/2}$, and $|\lambda|_\infty = \max(\lambda_1,\ldots,\lambda_r) = \lambda_1$.
	\item[$\bullet$] The space $B_\lambda$ is a homogeneous space $\U(N)/J_\lambda$ for a closed connected subgroup $J_\lambda \subseteq \U(N)$. The subgroup $J_\lambda$ depends only on the multiplicities of the parts of $\lambda$ and is given explicitly in Proposition~\ref{prop:BlambdaAsUnitaryOrbit}. Set $\mathbf{H}^{**}(B_\lambda) \coloneqq \Tor_{H^*(\BU(N))}(H^*(\mathrm{B}J_\lambda),\Z)$. The cohomology groups of $B_\lambda$ are obtained from $\mathbf{H}^{**}(B_\lambda)$ by collapsing the bigrading by $t = q$ \cite{MR0394720}, and $\mathbf{H}_*(B_\lambda)$ is further obtained by negating the resulting grading. 
	\item[$\bullet$] The energy functional $E\colon L\Gr(k,N) \to \R$ is a perfect Morse--Bott function. For any closed geodesic in $B_\lambda$, its length is $\pi|\lambda|_2$, its energy is $\pi^2|\lambda|_2^2/2$, and its Morse index and nullity sum to $2k(N-k) + (2N+2)|\lambda| - 4\sum_{i=1}^r i\lambda_i$.
\end{itemize}

\begin{thm}\label{thm:mainTheorem}
	For $0 < k < N$, the stable $k$-colored $\sl(N)$ homology of $T(2,\infty)$ coincides with the homology of the free loop space of the complex Grassmannian $\Gr(k,N)$. More precisely, we have isomorphisms of (bi)graded abelian groups \begin{align*}
		\mathbf{H}_*(L\Gr(k,N)) &\cong \bigoplus_{\lambda} s^{(2N+2)|\lambda| - 4\sum_i i\lambda_i} \mathbf{H}_*(B_\lambda)\\
		\KR_{k,N}(T(2,\infty)) &\cong \bigoplus_\lambda t^{-2|\lambda|}q^{4\sum_i i\lambda_i} \mathbf{H}^{**}(B_\lambda)
	\end{align*}
	where both direct sums are over partitions with at most $\min(k,N-k)$ parts. The invariant of $T(2,2m+1)$ is \[
		\KR_{k,N}(T(2,2m+1)) \cong q^{k(N-k)(2m-1)} \bigoplus_{|\lambda|_\infty \leq m} t^{-2|\lambda|}q^{4\sum_i i\lambda_i} \mathbf{H}^{**}(B_\lambda)
	\]
	where the direct sum is over partitions with at most $\min(k,N-k)$ parts, each of size at most $m$.
\end{thm}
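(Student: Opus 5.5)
The theorem splits into two independent computations, joined at the end by a combinatorial matching of gradings. The loop-space side is essentially classical and I would do it first. By the facts from section~\ref{sec:loop_homology_of_the_grassmannian}, the energy functional on $L\Gr(k,N)$ is a perfect Morse--Bott function whose critical manifolds are the $B_\lambda$. So $H_*(L\Gr(k,N)) \cong \bigoplus_\lambda H_{*-\mathrm{ind}(\lambda)}(B_\lambda)$, provided the negative bundles are orientable; that holds since the $B_\lambda$ are simply connected homogeneous spaces $\U(N)/J_\lambda$ with $J_\lambda$ connected. The index plus nullity is $2k(N-k) + (2N+2)|\lambda| - 4\sum_i i\lambda_i$, and the nullity equals $\dim B_\lambda$ for a Bott-nondegenerate symmetric space. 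Passing to the shifted gradings $\mathbf{H}_*(L\Gr) = H_{*+2k(N-k)}$ and $\mathbf{H}_*(B_\lambda) = H_{*+\dim B_\lambda}$, the index shift becomes exactly $s^{(2N+2)|\lambda|-4\sum i\lambda_i}$. This gives the first isomorphism.

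The knot side is the real content. I would compute $\KR_{k,N}(T(2,2m+1))$ from the minimal Rickard complex of the $2$-strand braid $\sigma^{2m+1}$ with both strands colored $k$, as in \cite{wang2025minimalrickardcomplexesbraids}. That complex is a one-sided twisted complex built from the webs $W_j$ (two $k$-strands merging to $k+j$ and splitting), $0\le j\le \min(k,N-k)$. Each $W_j$ appears in the power $\sigma^{2m+1}$ with explicit shifts and with differentials given by explicit dot/sym polynomials. After closing up, each closed web $\widehat{W}_j$ has $\sl(N)$ homology equal to the cohomology of a partial flag variety. Concretely it is a quotient ring of symmetric polynomials in alphabets of sizes $k-j, j, j, N-k-j,\dots$, i.e.\ $H^*$ of a homogeneous $\U(N)$-space, viewed as a module over $H^*(\BU(N))$. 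The closure of the twisted complex is then a complex of free $H^*(\mathrm{B}T')$-type modules with polynomial differentials.

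The organizing idea is to recognize, summand by summand, Koszul-type resolutions computing $\Tor_{H^*(\BU(N))}(H^*(\mathrm{B}J_\lambda),\Z)$. I would filter the closed complex by the web index and by the power of $\sigma^2$ contributing. Partitions $\lambda$ with $|\lambda|_\infty\le m$ should index the surviving pieces: the parts $\lambda_i$ record how many full twists each of the $r\le\min(k,N-k)$ "merged strands" absorbs. Multiplicities of equal parts then determine $J_\lambda$ via Proposition~\ref{prop:BlambdaAsUnitaryOrbit}. On each piece I would show the differential is, up to change of basis, the Koszul differential on the regular sequence given by the difference of Chern classes. That piece then computes the required $\Tor$, and the Gugenheim--May identification lets me read off $\mathbf{H}^{**}(B_\lambda)$. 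Tracking the $t$-shifts ($-2$ per unit of $|\lambda|$) and $q$-shifts ($4i$ per unit in the $i$th part) through the Rickard complex gives the third formula, including the overall normalization $q^{k(N-k)(2m-1)}$. The stable statement follows by letting $m\to\infty$: the inclusion $\sigma^{2m+1}\to\sigma^{2m+3}$ chain maps are compatible with the filtration, so the colimit is the sum over all $\lambda$.

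The main obstacle is the middle step. One must show the filtered complex splits, integrally, as a direct sum of these Koszul pieces, with no residual differentials between different $\lambda$ and no extra torsion beyond what $\Tor$ predicts. I would first try to find a $q$-degree argument, or a dot-sliding homotopy, that kills the cross-terms. Failing that, I would compute the associated spectral sequence and show it degenerates by comparing ranks over every prime field against the loop-space side, which is already known.
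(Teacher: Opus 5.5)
\textbf{Loop-space side.} This half follows the paper: Ziller's Morse--Bott decomposition plus the index computation, which you get right. One justification is wrong, though. The $B_\lambda$ need not be simply connected. When $N=2k$ and $\lambda$ has $k$ nonzero parts, $\pi_1(B_\lambda)=\Z/2$; an example is the $\SO(3)$ components for $S^2$. The negative bundle is still orientable, because it is a $\U(N)$-homogeneous bundle with connected isotropy group $J_\lambda$. Also, integral perfection is exactly what must be upgraded from Ziller's $\Z/2$ result. That upgrade requires checking that the completing manifolds $\Gamma_\lambda$ are orientable, as the paper does, and you do not address this.

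\textbf{First gap on the knot side: the splitting into $\lambda$-blocks.} The knot half has genuine gaps at both load-bearing steps. The first is the splitting with no cross-terms, which you leave open.
\begin{itemize}
\item The paper's mechanism is the \emph{twist closure}: close $\sr{F}^{2m}(\sr{P}_k)$ with one extra half twist, rather than closing $\sigma^{2m+1}$.
\item Each $\overline{W(\mu)T}$ then deformation retracts onto a single web $U(\mu)$ concentrated in $t$-degree $-r(\mu)$.
\item After homological perturbation, a $t$-degree count shows that the only possible differentials lower one entry of $\mu$ by $1$ with $r(\mu)$ fixed.
\item Those lowering an odd entry vanish, because they factor through $\overline{V_{r(\mu)-1}T}$, which lives in $t$-degree $-r(\mu)+1$.
\item What survives pairs $\mu_i\in\{2\lambda_i-1,2\lambda_i\}$, giving the blocks.
\end{itemize}
Your fallback, forcing degeneration by comparing ranks with the loop homology, is circular. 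Nothing a priori relates the rank of $\KR_{k,N}$ to $H_*(L\Gr(k,N))$; that relation is the conclusion.

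Relatedly, the minimal complex is not built from the webs $W_j$ alone. Its objects are the refined webs $W_r^{r_1,\ldots,r_m}$, which split the $r$-edge according to the multiplicities of $\mu$. Only these give the partially symmetric state spaces $\Z[\mathbf{X}]^{\fk{S}_v}\otimes\Sym(\mathbf{Y})\otimes\cdots$ from which $H^*(\mathrm{B}J_\lambda)$ arises.

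\textbf{Second gap: the blocks as resolutions.} You assert that each block is, up to change of basis, a Koszul complex, but give no argument. The block is the cube complex $B^{r_1,\ldots,r_m}$. Its differentials are composites $\theta_i\cdots\theta_{r-1}\,Q_r\,\hat\theta_{r-1}\cdots\hat\theta_i$ of Demazure operators and transpositions, restricted to partially symmetric subrings. These are not multiplication maps, and you give no way to see that the block is acyclic outside degree $0$.

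The paper handles this in two steps.
\begin{itemize}
\item For distinct parts the block is all of $C^r$. The elements $Q_{r,j}=e_{r-j}(\mathbf{Y}-\{x_{r-j},\ldots,x_r\})$ form a regular sequence generating $(e_i(\mathbf{X})-e_i(\mathbf{Y}))$. An iterated spectral sequence then shows that $C^r$ resolves the quotient.
\item For repeated parts, the paper builds an action of $\cl{H}_r$ on $C^r$ \emph{by chain maps} (the Appendix). Then $B^{r_1,\ldots,r_m}$ is the image of an idempotent, hence a direct summand of $C^r$, hence again a resolution.
\end{itemize}
Some such integral splitting device is unavoidable. Over $\Z$ you cannot pass to $\fk{S}_{r_1}\x\cdots\x\fk{S}_{r_m}$-invariants of an acyclic complex by averaging. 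This is exactly where the ``extra torsion'' you worry about has to be controlled.
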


\begin{rem}\label{rem:cyclicGrading}
	By collapsing and negating the bigrading on $\KR_{k,N}(T(2,\infty))$, we obtain $\bigoplus_{\lambda} s^{2|\lambda| - 4\sum_i i\lambda_i} \mathbf{H}_*(B_\lambda)$ which differs from $\mathbf{H}_*(L\Gr(k,N))$ by a grading shift of $s^{2N|\lambda|}$ on the summand indexed by $\lambda$. So the graded groups agree if the $\Z$-grading is reduced to a cyclic $\Z/2N$-grading, but the groups are no longer finitely generated in each grading. 
\end{rem}

This coincidence between link homology and loop homology is mysterious to the author. The proof of Theorem~\ref{thm:mainTheorem} is entirely computational, but we believe that the result demands a conceptual explanation. The appearance of the number $2N$ in relation to the grading suggests that Floer theory may play a role. A cyclic grading by $\Z/2N$ arises in instanton Floer homology with gauge group $\SU(N)$ \cite{MR2860345} and in Lagrangian intersection homology within a monotone symplectic manifold with minimal Chern number $N$ \cite{MR987770,MR1223659}. For lack of additional evidence, we refrain from further speculation here. 

We note that Observation~\ref{obs:KhT2infandHLS2} is distinct from the connection between the Khovanov homology of $T(2,2m)$ and the Hochschild homology of $\Z[x]/(x^2)$ of \cite{rozansky2010categorificationstablesu2wittenreshetikhinturaev,MR2657644}. Their connection is most naturally phrased in terms of a stable limit of the Khovanov homology groups of the torus links $T(2,2m)$ with oppositely oriented strands that corresponds to an antisymmetric projector rather than the symmetric projector considered here. 

In section~\ref{sec:loop_homology_of_the_grassmannian}, we review free loop spaces of compact symmetric spaces following Ziller \cite{MR649625}, specialized to the Grassmannian. We upgrade his main result from $\Z/2$-coefficients to $\Z$-coefficients for the Grassmannian and prove the first part of Theorem~\ref{thm:mainTheorem}. In section~\ref{sec:colored_link_homology_of_two_stranded_torus_knots}, we apply \cite{MR4903257,wang2025minimalrickardcomplexesbraids} to compute the $k$-colored $\sl(N)$ homology of $T(2,\infty)$ and $T(2,2m+1)$ and prove the second part of Theorem~\ref{thm:mainTheorem}. The proof uses a chain action of the nil-Hecke algebra $\cl{H}_r$ on an auxiliary complex $C^r$. In Appendix~\ref{sec:nil_hecke_action}, we verify the nil-Hecke relations of this action by straightforward but lengthy computations. 

\theoremstyle{definition}
\newtheorem*{ack}{Acknowledgements}
\begin{ack}
	I would like to thank William Ballinger, Elijah Bodish, Luke Conners, Eugene Gorsky, Matthew Hogancamp, Mikhail Khovanov, Peter Kronheimer, Tomasz Mrowka, Rohil Prasad, David Rose, Raphael Rouquier, Lev Rozansky, Matthew Stoffregen, Catharina Stroppel, Joshua Sussan, Emmanuel Wagner, Paul Wedrich, Alex Waldron, and Michael Willis for many helpful and inspiring conversations. This work is partially supported by the NSF MSPRF grant DMS-2303401 and the Simons Collaboration on ``New Structures in Low-Dimensional Topology''.
\end{ack}

\section{Loop homology of the Grassmannian}\label{sec:loop_homology_of_the_grassmannian}

We review the basics of closed geodesics in a compact symmetric space, specialized to the case of the complex Grassmannian, following Ziller \cite{MR649625}. Our presentation is more explicit than what appears in the literature for two reasons. First, it allows us to upgrade Ziller's main result for the Grassmannian from $\Z/2$-coefficients to $\Z$-coefficients. Second, it allows us to make a comparison with link homology. 

Fix integers $k$ and $N$ that satisfy $0 < k < N$, and let $M \coloneqq \Gr(k,N)$ be the Grassmannian of $k$-dimensional vector subspaces of $\C^N$. For notational simplicity, we assume that $k \leq N-k$ by passing from $\Gr(k,N)$ to $\Gr(N-k,N)$ by taking orthogonal complements. Let $\Lambda_{\text{std}} \in M$ be the span of the first $k$ standard basis vectors of $\C^N$. The special unitary group $G \coloneqq \SU(N)$ acts transitively on $M$, and the stabilizer of $\Lambda_{\text{std}}$ is $K \coloneqq \mathrm{S}(\U(k) \x \U(N-k))$. Let $J \coloneqq -\Id_k \oplus \Id_{N-k} \in \U(N)$, and let $\sigma\colon G \to G$ be the involutive automorphism given by conjugation by $J$. Then $K$ is the fixed point set of $\sigma$, so $(G,K)$ is a symmetric pair and $M = G/K$ is a Riemannian symmetric space. 

Let $\fk{g} \coloneqq \fk{su}(N)$ be the Lie algebra of $G$, and let $\fk{k} \subseteq \fk{g}$ be the Lie algebra of $K$. Let $\fk{m} \subseteq \fk{g}$ be the orthogonal complement to $\fk{k}$ with respect to the Killing form. Explicitly, we have \[
	\fk{k} = \left\{ \begin{pmatrix}
		X & 0\\
		0 & Y
	\end{pmatrix} \:\Bigg|\: X\in \fk{u}(k), Y \in \fk{u}(N-k), \Tr(X) + \Tr(Y) = 0 \right\} \qquad \fk{m} = \left\{ \begin{pmatrix}
		0 & -Z^*\\
		Z & 0
	\end{pmatrix} \:\Bigg|\: Z \in \C^{(N-k) \x k} \right\}
\]
where $Z^*$ denotes the conjugate transpose of $Z$. The subspaces $\fk{k}$ and $\fk{m}$ of $\fk{g}$ are the $\pm1$ eigenspaces of $(D\sigma)_{\Id}$. Let $\pr\colon G \to M$ denote the projection map given by $g \mapsto g\Lambda_{\text{std}}$. We identify $\fk{m}$ with the tangent space of $M$ at $\Lambda_{\text{std}}$ by restricting $(D\pr)_{\Id}$ to $\fk{m}$. Under this identification, the Riemannian exponential map is given by $\pr\circ\exp$ where $\exp\colon \fk{m} \to G$ is the matrix exponential map.

We define an explicit basis for $\fk{m}$. Note that a matrix in $\fk{m}$ is determined by its bottom left block of size $(N-k) \x k$. For $i \in \{1,\ldots,N-k\}$ and $j \in \{1,\ldots,k\}$, let $E_{ij} \in \fk{m}$ denote the matrix whose bottom left block has $(i,j)$ entry equal to $1$ and all other entries zero. Let $F_{ij} \in \fk{m}$ denote the matrix whose bottom left block has $(i,j)$ entry equal to $\sqrt{-1}$ and all other entries zero. Note that the two nonzero entries of $E_{ij}$ are $1$ and $-1$, while the two nonzero entries of $F_{ij}$ are both $\sqrt{-1}$. The $2k(N-k)$ matrices $E_{ij},F_{ij}$ form a basis for $\fk{m}$.

Let $\fk{a} \subseteq \fk{m}$ be the $k$-dimensional subspace with basis $E_{ii}$ for $i \in \{1,\ldots,k\}$. So \[
	\fk{a} = \left\{ \begin{pmatrix}
		0 & -A^t\\
		A & 0
	\end{pmatrix} \:\Bigg|\: A \in \R^{(N-k) \x k} \text{ is diagonal } \right\}.
\]
The image of $\fk{a}$ under the Riemannian exponential is a flat totally geodesic torus $A = (S^1)^k$ within $M$. For $i \in \{1,\ldots,k\}$, the $i$th coordinate factor of $A$ is $\pr(\exp(tE_{ii}))$ for $t \in [0,\pi]$.

We have defined the relevant vector subspaces: $\fk{a} \subseteq \fk{m}$ and $\fk{g} = \fk{k} \oplus \fk{m}$. Next, we consider adjoint actions. Note that $\fk{m}$ is not a Lie subalgebra of $\fk{g}$. Instead, we have $[\fk{m},\fk{m}] \subseteq \fk{k}$ and $[\fk{m},\fk{k}] \subseteq \fk{m}$. The subspace $\fk{a} \subseteq \fk{m}$ is a maximal abelian subspace, where abelian means $[\fk{a},\fk{a}] = 0$. For $x \in \fk{a}$, consider the linear map $\fk{m} \to \fk{m}$ given by $y \mapsto [x,[x,y]]$. The endomorphisms of $\fk{m}$ assigned to any pair of elements of $\fk{a}$ commute, so we may attempt to simultaneously diagonalize the entire action of $\fk{a}$ on $\fk{m}$. This may be done successfully, and below is the result. In each case, we specify a linear map $\alpha\colon \fk{a} \to \R$ and a subspace $\fk{m}_\alpha\subseteq \fk{m}$ such that $[x,[x,y]] = -\pi^2\alpha(x)^2 y$ for all $y\in\fk{m}_\alpha$ and $x \in \fk{a}$. \begin{itemize}
	\item[$\bullet$] For $j \in \{1,\ldots,k\}$, let $\fk{m}_{\alpha_j} \coloneqq \mathrm{span}(F_{jj})$ and $\alpha_j(E_{ii}) = \begin{cases}
					2/\pi & i = j\\
					0 & i \neq j.
				\end{cases}$
	\item[$\bullet$] For $j \in \{1,\ldots,k\}$, let $\fk{m}_{\beta_j} \coloneqq \mathrm{span}(E_{ij},F_{ij} \:|\: i \in \{k+1,\ldots,N-k\})$ and $\beta_j(E_{ii}) = \begin{cases}
		1/\pi & i = j\\
		0 & i \neq j.
	\end{cases}$
	\item[$\bullet$] For $i,j \in \{1,\ldots,k\}$ with $i < j$, let $\fk{m}_{\gamma_{ij}}\coloneqq \mathrm{span}(E_{ij}-E_{ji},F_{ij}+F_{ji})$ and $\gamma_{ij}(E_{ll}) = \begin{cases}
			1/\pi & l \in \{i,j\}\\
			0 & l \notin \{i,j\}.
		\end{cases}$
	\item[$\bullet$] For $i,j \in \{1,\ldots,k\}$ with $i < j$, let $\fk{m}_{\delta_{ij}}\coloneqq \mathrm{span}(E_{ij}+E_{ji},F_{ij} - F_{ji})$ and $\delta_{ij}(E_{ll}) = \begin{cases}
		1/\pi & l = i\\
		-1/\pi & l = j\\
		0 & l \notin \{i,j\}.
	\end{cases}$
\end{itemize}
We obtain a direct sum decomposition \[
	\fk{m} = \fk{a} \oplus \bigoplus_j \fk{m}_{\alpha_j} \oplus \bigoplus_j \fk{m}_{\beta_j} \oplus \bigoplus_{i<j} \fk{m}_{\gamma_{ij}} \oplus \bigoplus_{i<j} \fk{m}_{\delta_{ij}}.
\]
Let $R_+ \coloneqq \{\alpha_j,\beta_j\}_{j} \sqcup \{\gamma_{ij},\delta_{ij}\}_{i < j}$. Regarding terminology, the elements of $R_+$ are essentially the positive roots, except that $\beta_j$ is not considered a positive root when $N = 2k$ because $\fk{m}_{\beta_j} = 0$. In order to avoid handling the case $N = 2k$ separately, we include $\beta_j$ within $R_+$ but refrain from referring to the elements of $R_+$ as positive roots.

We now direct our attention within $\fk{a}$. Each $\alpha \in R_+$ determines a hyperplane $\{\alpha = 0\} \subseteq \fk{a}$. Consider the union of these hyperplanes over all $\alpha \in R_+$. The connected components of the complement of this union are called the Weyl chambers of $\fk{a}$. We define the positive Weyl chamber $F$ to be \[
	F \coloneqq \left\{\:x\in\fk{a} \:\big|\: \alpha(x) > 0 \text{ for all }\alpha\in R_+\:\right\} = \left\{\:a_{1}E_{11} + \cdots + a_{k}E_{kk} \in \fk{a} \:\big|\: a_{1} > \cdots > a_{k} > 0\:\right\},
\]and we note that its closure is $\ol{F} = \{\:a_{1}E_{11} + \cdots + a_{k}E_{kk} \in \fk{a} \:|\: a_{1} \ge \cdots \ge a_{k} \ge 0\:\}$. Next, let $L \subseteq \fk{a}$ be the lattice consisting of all $\Z$-linear combinations of $\pi E_{11},\ldots,\pi E_{kk}$. A matrix $x \in \fk{a}$ satisfies $\pr(\exp(x)) = \Lambda_{\text{std}}$ if and only if $x \in L$. There is a bijection between $L$ and the set of closed geodesics within the flat totally geodesic torus $A \subseteq M$ based at $\Lambda_{\text{std}} = \pr(\exp(0))$, given by sending $x\in L$ to the closed geodesic $t\mapsto \pr(\exp(tx))$ for $t \in [0,1]$. The intersection $L \cap \ol{F}$ consists of the points \[
	L \cap \ol{F} = \left\{\: \lambda_1\pi E_{11} + \cdots + \lambda_k \pi E_{kk} \in \fk{a} \:\big|\: (\lambda_1,\ldots,\lambda_k) \in\Z^k \:\text{ and } \lambda_1 \ge \cdots \ge \lambda_k \ge 0 \: \right\}
\]So there is a bijection between $L \cap \ol{F}$ and the set of partitions $\lambda$ with at most $k$ parts. See \cite[section 1.3]{MR649625} for the following proposition.

\begin{prop}\label{prop:partitionsAndCriticalSubmanifolds}
	There is a bijection between partitions with at most $k$ parts and connected components of the space of closed geodesics on $M$. The bijection $\lambda \mapsto B_\lambda$ is defined in the following way. Set $x_\lambda \coloneqq \lambda_1\pi E_{11} + \cdots + \lambda_k \pi E_{kk} \in L \cap \ol{F} \subseteq \fk{a}$, and let $\gamma_\lambda$ be the closed geodesic given by $\gamma_\lambda(t) \coloneqq \pr(\exp(tx_\lambda))$. Define $B_\lambda\coloneqq\{\,g\gamma_\lambda \:|\: g\in G\}$ to be the $G$-orbit of $\gamma_\lambda$.
\end{prop}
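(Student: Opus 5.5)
The plan is to show that every closed geodesic is $G$-conjugate to exactly one $\gamma_\lambda$, and then that the orbits $B_\lambda$ are exactly the connected components.

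\textbf{Step 1: every closed geodesic lies in the $G$-orbit of some $\gamma_\lambda$.} Let $c$ be a closed geodesic, parametrized on $[0,1]$ with constant speed. Since $G$ acts transitively on $M$, translate so that $c(0) = \Lambda_{\text{std}}$. Then $c(t) = \pr(\exp(ty))$ for some $y \in \fk{m}$, because the Riemannian exponential at $\Lambda_{\text{std}}$ is $\pr\circ\exp$.

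The standard fact for symmetric spaces is $\fk{m} = \bigcup_{k\in K}\mathrm{Ad}_k(\fk{a})$. Concretely, a singular value decomposition $Z = U\Sigma V^*$ with $U \in \U(N-k)$ and $V \in \U(k)$ puts the bottom-left block in real diagonal form. We may adjust determinants to land in $\mathrm{S}(\U(k)\x\U(N-k))$ by rotating a phase on the last $N-2k$ coordinates. If $N=2k$, we instead absorb the phase into a diagonal unitary factor; this can flip signs, but signed permutations of the diagonal are handled next.

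The Weyl group acts by signed permutations of the diagonal entries $a_i$, realized by elements of $K$. Using it, move $\mathrm{Ad}_k y$ into $\ol{F}$, so that $a_1 \ge \cdots \ge a_k \ge 0$. Since $k$ fixes $\Lambda_{\text{std}}$, we have $kc(t) = \pr(\exp(t\,\mathrm{Ad}_k y))$. Closedness gives $\pr(\exp(\mathrm{Ad}_k y)) = \Lambda_{\text{std}}$, and the velocity must also match at $t=1$. For $x\in\fk{a}$, closing up with matching velocity is equivalent to $\exp(x)$ acting on $\Lambda_{\text{std}}$ trivially and commuting appropriately. By the stated fact, the closing condition says $x \in L$, and the velocity condition holds automatically on the flat $A$ since $\exp(x)$ then lies in $K$ and centralizes $x$. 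Hence $\mathrm{Ad}_k y = x_\lambda$ for some partition $\lambda$, and $c \in B_\lambda$.

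\textbf{Step 2: the orbits are distinct.} Suppose $g\gamma_\lambda = \gamma_\mu$. Evaluating at $t=0$ gives $g \in K$, and then $\mathrm{Ad}_g x_\lambda = x_\mu$. The singular values of the bottom-left block are $K$-invariant and equal $\pi\lambda_i$, so $\lambda = \mu$.

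\textbf{Step 3: the orbits are the connected components.} Each $B_\lambda$ is connected because $G$ is connected, and compact because it is a homogeneous space of a compact group. The energy is constant on $B_\lambda$, equal to $\pi^2|\lambda|_2^2/2$. I would conclude with two facts:
\begin{itemize}
\item Only finitely many $\lambda$ lie below any energy bound, so the $B_\lambda$ form a locally finite family.
\item Each $B_\lambda$ is open in the space of closed geodesics.
\end{itemize}
For openness, a closed geodesic near $\gamma_\lambda$ has initial vector $y$ near $x_\lambda$ and conjugates to some $x_\mu$ close to $x_\lambda$. Since $L$ is discrete and the singular values vary continuously, $\mu = \lambda$.

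The main obstacle is the $K$-conjugation into $\ol{F}$ inside $\mathrm{S}(\U(k)\x\U(N-k))$ rather than $\U(k)\x\U(N-k)$, together with the claim that a closing $x \in \fk{a}$ lies in $L$. I would handle the latter directly: $\exp(x)$ consists of $2\x2$ rotation blocks by angles $a_i$, and these preserve $\Lambda_{\text{std}}$ exactly when each $a_i \in \pi\Z$.
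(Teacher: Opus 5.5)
Your proof is correct, and it is essentially the standard argument of \cite[section 1.3]{MR649625}, which the paper cites in place of giving its own proof: conjugate the initial velocity into $\ol{F}$ by $K$, get the closing condition from the lattice $L$, and separate orbits because each $K$-orbit in $\fk{m}$ meets $\ol{F}$ exactly once, which you make concrete with singular values. Two remarks: the step you single out as the main obstacle is harmless, because scalar matrices act trivially by $\mathrm{Ad}$, so replacing $g \in \U(k)\x\U(N-k)$ by $\omega g$ with $\omega^N = \det(g)^{-1}$ lands in $K$ without changing the action on $\fk{m}$; and your openness and local-finiteness step should explicitly state the continuity of $c \mapsto \dot{c}(0)$ (or of $E$) on the space of closed geodesics that it relies on.
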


Let $LM$ be the free loop space of $M$. The energy functional $E\colon LM \to \R$ is given by $E(\gamma) = \frac12\int_{S^1} \|\dot{\gamma}\|^2 \:\dt$. The critical points of $E$ are precisely the closed geodesics on $M$. The critical points are not isolated; instead, they come in families of closed submanifolds. These critical submanifolds are the connected components $B_\lambda$ of the space of closed geodesics, indexed by partitions $\lambda$ with at most $k$ parts. The critical submanifolds are all nondegenerate by \cite[Theorem 2]{MR649625}. Let $\gamma_\lambda \in B_\lambda$ be the closed geodesic $\gamma_\lambda(t) = \pr(\exp(tx_\lambda))$ where $x_\lambda = \sum_{i=1}^k \lambda_i \pi E_{ii}$. Its length is $\ell(\gamma_\lambda) = \pi|\lambda|_2$, its energy is $E(\gamma_\lambda) = \pi^2|\lambda|_2^2/2$, and its Morse index $\mu$ and nullity $\nu$ are \begin{align*}
	\mu(\gamma_\lambda) &= \sum_{\substack{\alpha\in R_+\\\alpha(x_\lambda) > 0}} (\alpha(x_\lambda)-1) \cdot \dim \fk{m}_\alpha & \nu(\gamma_\lambda) &= \dim M + \sum_{\substack{\alpha\in R_+\\\alpha(x_\lambda) > 0}} \dim\fk{m}_\alpha
\end{align*}by \cite[Theorem 3]{MR649625}. Nondegeneracy of the critical submanifold $B_\lambda$ implies that $\dim B_\lambda = \nu(\gamma_\lambda)$. The following proposition identifies $B_\lambda$ explicitly.

\begin{prop}\label{prop:BlambdaAsUnitaryOrbit}
	Let $\lambda = (\lambda_1,\ldots,\lambda_k)$ be a partition with at most $k$ parts. Let $r_1,\ldots,r_m$ be the multiplicities of its parts. In other words, they are the unique positive integers for which \[
		\lambda_1 = \cdots = \lambda_{r_1} > \lambda_{r_1+1} = \cdots = \lambda_{r_1+r_2} > \cdots > \lambda_{r_1+\cdots+r_m+1} = \cdots=\lambda_{k}= 0.
	\]Let $r \coloneqq r_1+\cdots+r_m$ be the number of nonzero parts of $\lambda$. Then $B_\lambda$ is the homogeneous space $B_\lambda = \U(N)/J_\lambda$ where \[
		J_\lambda \coloneqq \left\{ \hspace{5pt} \left(\setlength{\tabcolsep}{0.1em}\def\arraystretch{0.8}\begin{tabular}{cccccccc}
			$\smash{U_1}$\\
			& $\smash{\ddots}$\\
			& & $\smash{U_m}$\\
			& & & $\smash{V}$\\
			& & & & $\smash{U_1}$\\
			& & & & & $\smash{\ddots}$\\
			& & & & & & $\smash{U_m}$\\
			& & & & & & & $\smash{W}$
		\end{tabular}\right) \in \U(N) \hspace{5pt} \:\middle|\: \hspace{5pt} \begin{aligned}
			U_1 &\in \U(r_1)\\
			\smash{\vdots}\\[-5pt]
			U_m &\in \U(r_m)\\[-2pt]
			V &\in \U(k-r)\\[-2pt]
			W &\in \U(N-k-r)
		\end{aligned} \hspace{5pt} \right\}.
	\]
\end{prop}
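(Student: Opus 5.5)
The plan is to realize $B_\lambda$ as a $\U(N)$-orbit and compute the stabilizer of $\gamma_\lambda$ directly. Since $\U(N) = \U(1)\cdot\SU(N)$ and scalar matrices act trivially on $M$, the $\U(N)$-orbit of $\gamma_\lambda$ equals the $\SU(N)$-orbit, which is $B_\lambda$ by Proposition~\ref{prop:partitionsAndCriticalSubmanifolds}. So $B_\lambda \cong \U(N)/\Stab(\gamma_\lambda)$, and it remains to show that $\Stab(\gamma_\lambda) = J_\lambda$.

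First I will make $\gamma_\lambda$ explicit. Order the basis of $\C^N$ as $e_1,\ldots,e_k,e_{k+1},\ldots,e_{2k},e_{2k+1},\ldots,e_N$; the nonzero entries of $E_{ii}$ lie in the plane $\langle e_i,e_{k+i}\rangle$. Then $\exp(tx_\lambda)$ acts on the plane $\langle e_i, e_{k+i}\rangle$ as rotation by the angle $t\lambda_i\pi$, and fixes $e_{2k+1},\ldots,e_N$. Hence \[
	\gamma_\lambda(t) = \mathrm{span}\big(\cos(t\lambda_i\pi)e_i + \sin(t\lambda_i\pi)e_{k+i} \:\big|\: i=1,\ldots,k\big).
\]
Next I will show $J_\lambda \subseteq \Stab(\gamma_\lambda)$. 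Group the indices $i\le k$ into blocks of equal $\lambda_i$: blocks of sizes $r_1,\ldots,r_m$, followed by the block of $k-r$ indices with $\lambda_i=0$. For a block $S$ with common value $c$, the $|S|$-dimensional piece of $\gamma_\lambda(t)$ spanned by the vectors indexed by $S$ is $\{\cos(tc\pi)v+\sin(tc\pi)\iota(v) : v \in \C^S\}$, where $\iota$ is the shift $e_i\mapsto e_{k+i}$. A matrix acting as $U$ on both $\C^S$ and $\iota(\C^S)$ commutes with $\iota$, so it preserves this piece. For the zero block, $\gamma_\lambda(t)$ contains $\C^{\{i\le k:\lambda_i=0\}}$ for all $t$ and is orthogonal to $e_{k+i}$ for those $i$ and to $e_{2k+1},\ldots,e_N$. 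These last $N-k-r$ vectors are exactly what $W$ acts on: the block $W$ of $J_\lambda$ lives on $e_{k+r+1},\ldots,e_N$. A small bookkeeping check is needed here to match the block positions in the displayed $J_\lambda$ with this index ordering. Thus $V$ and $W$ act freely on these pieces and $J_\lambda$ fixes $\gamma_\lambda$ pointwise.

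For the reverse inclusion, take $g$ with $g\gamma_\lambda(t)=\gamma_\lambda(t)$ for all $t$. The orthogonal projections $P(t)$ onto $\gamma_\lambda(t)$ are trigonometric polynomials in $t$, and $g$ commutes with every $P(t)$, hence with all their Fourier coefficients. We have $P(t) = \sum_i \big(\cos^2 \, e_ie_i^* + \sin^2 \, e_{k+i}e_{k+i}^* + \cos\sin\,(e_ie_{k+i}^*+e_{k+i}e_i^*)\big)$ with angle $t\lambda_i\pi$. Expanding via double-angle formulas, the coefficient of $\cos(2\pi c t)$ and of $\sin(2\pi ct)$ for each distinct nonzero value $c$ isolates the operators $\sum_{\lambda_i=c}(e_ie_i^*-e_{k+i}e_{k+i}^*)$ and $\sum_{\lambda_i=c}(e_ie_{k+i}^*+e_{k+i}e_i^*)$. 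The constant term, together with $P(0)$, isolates the projection onto the zero-block span and the projection onto the complement $\langle e_{k+i}:\lambda_i=0\rangle\oplus\langle e_{2k+1},\ldots\rangle$.

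Commuting with these operators forces $g$ to preserve each eigenspace decomposition. Concretely, $g$ preserves $\C^{S}$ and $\iota(\C^S)$ for every nonzero block $S$, and it commutes with $\iota$ there, so it acts by the same $U_j$ on both. It also preserves the zero block and the complement. This is exactly $J_\lambda$.

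The main obstacle is this reverse inclusion: arguing cleanly that commuting with the Fourier coefficients recovers all of these block constraints, including the identification of the two copies of $U_j$ via the off-diagonal operator. Distinctness of the values $\lambda_i$ across blocks is what separates the frequencies. Once that is done, $\Stab(\gamma_\lambda) = J_\lambda$, and $B_\lambda = \U(N)/J_\lambda$ as claimed. As a consistency check, I would compare $\dim \U(N)/J_\lambda$ against $\nu(\gamma_\lambda)$.
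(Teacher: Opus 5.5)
Your proposal is correct but takes a different route from the paper. The step you flag as the main obstacle does go through. The $\cos(2\pi ct)$-coefficient $A_c$ has $\C^{S}$ and $\iota(\C^{S})$ as its $\pm1$ eigenspaces. The $\sin(2\pi ct)$-coefficient $B_c$ interchanges these two spaces via $\iota$ and $\iota^{-1}$, so commuting with $B_c$ forces the two diagonal blocks to agree. The constant term $\tfrac12\sum_{\lambda_i>0}(e_ie_i^*+e_{k+i}e_{k+i}^*)+\sum_{\lambda_i=0}e_ie_i^*$ has eigenvalues $1$ on the zero block, $\tfrac12$ on the nonzero blocks, and $0$ on $\langle e_{k+r+1},\ldots,e_N\rangle$. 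It therefore separates the $V$-block from the $W$-block by itself, and $P(0)$ is not needed.

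The paper argues differently. It first notes that the stabilizer of the loop lies in the stabilizer $\U(k)\times\U(N-k)$ of $\gamma_\lambda(0)=\Lambda_{\text{std}}$. For such $g$, it shows that fixing $\gamma_\lambda$ is equivalent to $gx_\lambda g^{-1}=x_\lambda$: one direction differentiates at $t=0$, and the other uses $g\,\pr(\exp(tx))=\pr(\exp(t\,gxg^{-1}))$ for $g$ fixing $\Lambda_{\text{std}}$. Finally it solves $TX=XS$ for $g=S\oplus T$: $S_+$ must commute with $X_+^2$, hence is block diagonal by multiplicities, and then $T_+=S_+$.

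The two arguments are parallel in substance. Your distinct frequencies $2\pi c$ play the role of the distinct eigenvalues of $X_+^2$. Your $B_c$ plays the role of $T_+X_+=X_+S_+$ in identifying the two copies of $U_j$. Your constant term plays the role of the reduction to $\U(k)\times\U(N-k)$.

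The paper's version is shorter and conceptual: the stabilizer of $t\mapsto\pr(\exp(tx))$ is the centralizer of $x$ in the isotropy group, a fact that transfers verbatim to other compact symmetric spaces. Yours is elementary linear algebra. It avoids identifying $T_{\Lambda_{\text{std}}}M$ with $\fk{m}$ and the isotropy representation, and it checks $J_\lambda\subseteq\Stab(\gamma_\lambda)$ explicitly, which the paper leaves as straightforward. The cost is that it relies on the explicit rotation form of $\exp(tx_\lambda)$, which is special to the Grassmannian.
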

\begin{proof}
	The action of $\SU(N)$ on $B_\lambda$ factors through $\PU(N)$, so we obtain an action of $\U(N)$ on $B_\lambda$ by composing with $\U(N) \to \PU(N)$. We show that the stabilizer of $\gamma_\lambda$ under this unitary action is $J_\lambda$. The stabilizer of $\gamma_\lambda$ certainly stabilizes $\gamma_\lambda(0) = \Lambda_{\text{std}}$ so $\Stab(\gamma_\lambda) \subseteq \U(k) \x \U(N-k) \subseteq \U(N)$. We claim that $g\in \U(k) \x \U(N-k)$ stabilizes $\gamma_\lambda$ if and only if the adjoint action of $g$ on $\fk{g}$ stabilizes $x_\lambda\in \fk{a} \subseteq \fk{g}$. If $g$ stabilizes $\gamma_\lambda$, then differentiating $g\gamma_\lambda(t) = \gamma_\lambda(t)$ at $t = 0$ gives $g\dot{\gamma}_\lambda(0)g^{-1} = \dot{\gamma}_\lambda(0)$. Since $\dot{\gamma}_\lambda(0) = x_\lambda$, we see that the adjoint action of $g$ fixes $x_\lambda$. Conversely, if $gx_\lambda g^{-1} = x_\lambda$, then $g\pr(\exp(tx_\lambda)) = g\pr(\exp(tx_\lambda)g^{-1}) = \pr(\exp(tgx_\lambda g^{-1})) = \pr(\exp(tx_\lambda))$. 

	Assume that $gx_\lambda g^{-1} = x_\lambda$ for $g \in \U(k) \x \U(N-k)$. With $g = S \oplus T$ for $S\in \U(k)$, $T \in \U(N-k)$, we have $TX = XS$ where $X \coloneqq \mathrm{diag}(\lambda_1\pi,\ldots,\lambda_k\pi) \in \R^{(N-k) \x k}$. Next, we express $X,T,S$ as block matrices where the upper left block is size $r \x r$: \[
		X = \begin{pmatrix}
			X_+ & 0\\
			0 & 0
		\end{pmatrix} \qquad S = \begin{pmatrix}
			S_+ & 0\\
			0 & V
		\end{pmatrix} \qquad T = \begin{pmatrix}
			T_+ & 0\\
			0 & W
		\end{pmatrix}.
	\]The off-diagonal blocks of $S$ and $T$ are zero by the identity $TX = XS$ and the fact that $X_+$ is a diagonal matrix with nonzero diagonal entries. It follows that $S_+,T_+ \in \U(r)$ and $V \in \U(k-r)$ and $W \in \U(N-k-r)$. The identity $TX = XS$ gives $T_+X_+ = X_+ S_+$. Invertibility of $X_+$ gives $X_+S_+(X_+)^{-1} = T_+ \in \U(r)$ so \[
		\Id = T_+^* T_+ = (X_+S_+(X_+)^{-1})^*X_+S_+(X_+)^{-1}.
	\]Hence $S_+X_+^2 = X_+^2S_+$. Commuting diagonalizable matrices are simultaneously diagonalizable, so $S_+ = U_1 \oplus \cdots \oplus U_m$ where $U_i \in \U(r_i)$. It follows that $S_+$ and $X_+$ commute so $T_+ = X_+S_+(X_+)^{-1} = S_+$. Thus, $g$ lies in $J_\lambda$. That $g$ stabilizes $x_\lambda$ if $g \in J_\lambda$ is straightforward to verify.
\end{proof}

The following theorem is essentially \cite[Theorem 5]{MR649625}, specialized to the Grassmannian and with coefficients upgraded from $\Z/2$ to $\Z$. As explained by Ziller, upgrading to integer coefficients only requires verifying orientability of certain auxiliary spaces.

\begin{thm}\label{thm:ZillerThm}
	The integral homology of the free loop space of $M = \Gr(k,N)$ is isomorphic to the direct sum \[
		H_*(LM) \cong \bigoplus_{\lambda} H_{* - \mu(\gamma_\lambda)}(B_\lambda)
	\]over partitions with at most $k$ parts.
\end{thm}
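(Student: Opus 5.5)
Here is the plan: follow Ziller's argument, which uses Bott's completing-manifolds method, and upgrade the coefficients. By \cite[Theorem 2]{MR649625}, $E$ is a Morse--Bott function on $LM$ whose critical submanifolds are exactly the $B_\lambda$ of Proposition~\ref{prop:partitionsAndCriticalSubmanifolds}, and each $B_\lambda$ is nondegenerate. Each energy level contains only finitely many $B_\lambda$. The Morse--Bott filtration by sublevel sets $LM^{a}=\{E\le a\}$ then gives a spectral sequence, and at each critical level the relative term is
\[
H_*(LM^{a+\epsilon},LM^{a-\epsilon})\cong\bigoplus_{E(B_\lambda)=a}H_*(D\nu^-_\lambda,S\nu^-_\lambda).
\]
Here $\nu^-_\lambda\to B_\lambda$ is the negative bundle, of rank $\mu(\gamma_\lambda)$. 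If $\nu^-_\lambda$ is orientable, the Thom isomorphism with $\Z$-coefficients identifies this term with $H_{*-\mu(\gamma_\lambda)}(B_\lambda)$. It therefore suffices to prove two things: (i) each $\nu^-_\lambda$ is orientable, and (ii) every boundary map $H_*(LM^{a+\epsilon},LM^{a-\epsilon})\to H_{*-1}(LM^{a-\epsilon})$ vanishes. Given both, the long exact sequences split into short exact sequences, and the result follows by induction on $a$ and passage to the colimit. Since each graded piece is a free abelian group (see below), the extensions split, so the answer is a direct sum.

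For (ii) I would use Bott's completing manifolds exactly as Ziller does. For each $\lambda$, Ziller constructs a closed manifold $\Gamma_\lambda$, an iterated bundle of homogeneous spaces built from the root data $\alpha(x_\lambda)$, $\alpha\in R_+$. He also constructs a map $f_\lambda\colon(\Gamma_\lambda,\partial)\to(LM^{a},LM^{a-\epsilon})$ whose restriction is a fiberwise diffeomorphism onto the disk bundle of $\nu^-_\lambda$ near $B_\lambda$, and which carries the rest of $\Gamma_\lambda$ to lower energy. Bott's lemma says that if $\Gamma_\lambda\to B_\lambda$ is a fibration with closed fibers $\Gamma_\lambda^x$ satisfying $\dim\Gamma_\lambda^x=\mu(\gamma_\lambda)$, then the relative classes lift to absolute classes in $LM^{a}$, so the boundary maps vanish. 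Over $\Z/2$ this works with no extra hypothesis. Over $\Z$, the same argument goes through provided the fiber $\Gamma_\lambda^x$ is an orientable closed manifold: its fundamental class then maps to a generator of the Thom class on the fiber, and the resulting integral cycle provides the lift. Combined with $\nu^-_\lambda$ being orientable over $B_\lambda$, this gives (i) and (ii).

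The main obstacle is verifying these orientability conditions. For the Grassmannian, every group in sight is connected. $B_\lambda=\U(N)/J_\lambda$ by Proposition~\ref{prop:BlambdaAsUnitaryOrbit}, and $J_\lambda$ is a product of unitary groups, hence connected. So $B_\lambda$ is simply connected: it fibers over $\U(N)/\U(N)$-type flag data with connected fiber, and the long exact sequence gives $\pi_1=0$. Consequently every real vector bundle over it, in particular $\nu^-_\lambda$, is orientable. For the fibers of the completing manifold, Ziller's $\Gamma_\lambda^x$ is a product, or iterated bundle, of quotients $K_i/K_{i+1}$ of centralizers of elements of $\fk{a}$ in $K$. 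In our case these centralizers are block-unitary subgroups $\mathrm{S}(\U(\cdot)\x\cdots)$, hence connected, so each quotient is a simply connected complex flag-type manifold. A simply connected closed manifold is orientable, and an iterated bundle of such manifolds over simply connected bases is again simply connected, hence orientable. I would check carefully that the relevant centralizers $Z_K(x)$ are connected for every $x$ on the segment $t x_\lambda$. This is where the explicit root data above (the $\alpha_j,\beta_j,\gamma_{ij},\delta_{ij}$) would be used. Finally, freeness: each $H_*(B_\lambda;\Z)$ is torsion-free, because $B_\lambda$ fibers as a partial flag manifold bundle with free cohomology. So the integral spectral sequence degenerates with split extensions, and we obtain $H_*(LM)\cong\bigoplus_\lambda H_{*-\mu(\gamma_\lambda)}(B_\lambda)$.
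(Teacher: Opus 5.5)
Your overall route is the same as the paper's: Ziller's Theorem~5 via Bott--Samelson completing manifolds, with the passage from $\Z/2$ to $\Z$ reduced to orientability. However, the specific verifications you propose are false when $N=2k$ and $\lambda$ has $k$ nonzero parts, and this case includes the motivating example $M=S^2$.

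\textbf{Orientability.} In this case $J_\lambda\cong\U(r_1)\times\cdots\times\U(r_m)$ sits in $\U(2k)$ as $\mathrm{diag}(U_1,\ldots,U_m,U_1,\ldots,U_m)$. So $\det$ induces $(n_i)\mapsto 2\sum_i n_i$ on $\pi_1$, and $\pi_1(B_\lambda)\cong\Z/2$. For $S^2$, $B_{(l)}=\U(2)/\Delta\U(1)\cong\SO(3)$. The stabilizers in $K=\mathrm{S}(\U(k)\times\U(N-k))$ are also not all connected: $K_\lambda=J_\lambda\cap\SU(N)$ has two components (it is $\{\pm\Id\}$ for $S^2$). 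The factors $K_i/K_\lambda$ of the completing-manifold fibers need not be simply connected either; for $S^2$ each one is a circle, one for each of the $2l-1$ conjugate points. Consequently:
\begin{itemize}
\item ``$B_\lambda$ is simply connected, so every bundle over it is orientable'' fails;
\item your inductive simple-connectivity argument for the fibers fails;
\item the check that $Z_K(x)$ is connected along $tx_\lambda$ would come out negative.
\end{itemize}
The missing idea is the paper's. Since the $\SU(N)$-action factors through $\PU(N)$, replace $\SU(N),K,K_\lambda,K_i$ by $\U(N)$, $\U(k)\times\U(N-k)$, $J_\lambda$, $J_i$. Then $J_\lambda$ is connected by Proposition~\ref{prop:BlambdaAsUnitaryOrbit}. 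Hence $B_\lambda=\U(N)/J_\lambda$ and $\Gamma_\lambda\cong(\U(N)\times J_1\times\cdots\times J_m)/J_\lambda^{m+1}$ are orientable, being quotients of orientable manifolds by free actions of a connected group. Then $\xi$ is orientable as the normal bundle of $B_\lambda$ in $\Gamma_\lambda$. What you need is connectedness of the isotropy group, not simple connectivity.

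\textbf{Splitting.} In the same case $H_*(B_\lambda;\Z)$ is not torsion-free: $H_1(B_\lambda;\Z)\cong\Z/2$, for example $H_1(\SO(3);\Z)=\Z/2$. This is the same $2$-torsion that appears as $\Z_2$ in the Khovanov tables of the introduction. So your step ``the extensions split because each graded piece is free'' fails. Mere existence of lifts of relative classes would only give short exact sequences, not split ones. The splitting has to come from the completing manifolds themselves. Let $B_\lambda$ and $\Gamma_\lambda$ be closed and oriented, and let $p\colon\Gamma_\lambda\to B_\lambda$ be the projection with section $i$. Then the composite
\[
H_{*-\mu(\gamma_\lambda)}(B_\lambda)\cong H^{\dim\Gamma_\lambda-*}(B_\lambda)\xrightarrow{\;p^*\;}H^{\dim\Gamma_\lambda-*}(\Gamma_\lambda)\cong H_*(\Gamma_\lambda)\to H_*(LM^{a})
\]
is a homomorphic section of $H_*(LM^a)\to H_*(LM^a,LM^{a-\epsilon})$ over the $\lambda$-summand. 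Indeed, the composite back to $H_{*-\mu(\gamma_\lambda)}(B_\lambda)$ is the Poincar\'e dual of $i^*p^*=\Id$. This splits each short exact sequence with no freeness hypothesis, and it is how Ziller's theorem, and hence the paper's citation of it, yields a direct sum.
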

\begin{proof}
	Fix a partition $\lambda$ with at most $k$ parts. Based on a construction of Bott--Samelson \cite{MR105694}, Ziller constructs a closed $(\mu(\gamma_\lambda) + \dim B_\lambda)$-dimensional submanifold $\Gamma_\lambda \subseteq LM$, called a ``$K$-cycle'' or a ``completing manifold'', with the property that $B_\lambda \subseteq \Gamma_\lambda \subseteq LM$. The manifold $\Gamma_\lambda$ is a fiber bundle $\Gamma_\lambda \to B_\lambda$ where the inclusion $B_\lambda \subseteq \Gamma_\lambda$ is a section. Furthermore, the normal bundle of $B_\lambda$ within $\Gamma_\lambda$ is isomorphic to the ``negative vector bundle'' $\xi \to B_\lambda$, whose fiber over a point is defined to be the negative eigenspace of the Hessian of the energy functional. So $\xi$ is a rank $\mu(\gamma_\lambda)$ vector bundle over $B_\lambda$. As Ziller notes in \cite[Theorem 5]{MR649625}, it suffices to show that $\xi \to B_\lambda$ is an orientable vector bundle and that $\Gamma_\lambda$ is an orientable manifold. Because the normal bundle of $B_\lambda$ within $\Gamma_\lambda$ is $\xi$, it suffices to show that $B_\lambda$ and $\Gamma_\lambda$ are orientable. Proposition~\ref{prop:BlambdaAsUnitaryOrbit} shows that $B_\lambda = \U(N)/J_\lambda$ is orientable because $J_\lambda$ is connected. 

	To show that $\Gamma_\lambda$ is orientable, we first recall its definition. Let $0 < t_1 < \cdots < t_m < 1$ be the distinct times for which there exists $\alpha \in R_+$ with $\alpha(tx_\lambda) \in \Z_{>0}$. The points $\gamma_\lambda(t_i)\in M$ are precisely the points that are conjugate to $\Lambda_{\text{std}}$ along $\gamma_\lambda$. Let $K_i$ be the stabilizer of $\gamma_\lambda(t_i)$ within $K = \mathrm{S}(\U(k) \x \U(N-k))$. Let $K_\lambda$ be the stabilizer of $\gamma_\lambda$ within $K$ and note that $K_\lambda \subseteq K_i \subseteq K \subseteq G = \SU(N)$. Next, consider the right action of $K_\lambda^{m+1}$ on the product $W \coloneqq G \x K_1 \x \cdots \x K_m$ given by $(g,k_1,\ldots,k_m)\cdot(j_1,\ldots,j_{m+1}) \coloneqq (gj_1,j_1^{-1}k_1j_2,\ldots,j_m^{-1}k_mj_{m+1})$. Define a map $W \to LM$ by sending $(g,k_1,\ldots,k_m)$ to the loop \[
		t \mapsto \begin{cases}
			\begin{tabular}{c|c|c|c|c}
				$g\gamma_\lambda(t)$ & $gk_1\gamma_\lambda(t)$ & $gk_1k_2\gamma_\lambda(t)$ & $\:\:\cdots\:\:$ & $gk_1\cdots k_m \gamma_\lambda(t)$\\
				\hline
				$t \in [0,t_1]$ & $t \in [t_1,t_2]$ & $t \in [t_2,t_3]$ & $\cdots$ & $t\in [t_m,1]$
			\end{tabular}
		\end{cases}.
	\]The map descends to an embedding of the quotient $W/K_\lambda^{m+1} \to LM$, and $\Gamma_\lambda$ is defined to be its image. 

	To see that $\Gamma_\lambda$ is orientable, we simply work with $\U(N)$ instead of $G = \SU(N)$ just as in the proof of Proposition~\ref{prop:BlambdaAsUnitaryOrbit}. Let $V \coloneqq \U(N) \x J_1 \x \cdots \x J_m$ where $J_i \subseteq \U(k) \x \U(N-k)$ is defined to be the stabilizer of $\gamma_\lambda(t_i)$. Then $\Gamma_\lambda$ is diffeomorphic to the quotient of $V$ by the right action of $J_\lambda^{m+1}$ given by the same formula defining the right action of $K_\lambda^{m+1}$ on $W$. Since $J_\lambda$ is connected by Proposition~\ref{prop:BlambdaAsUnitaryOrbit} and because $V$ is orientable, it follows that $\Gamma_\lambda$ is orientable.
\end{proof}

\begin{cor}\label{cor:freeloopspaceshifts}
	We have \[
		\mathbf{H}_*(L\Gr(k,N)) \cong \bigoplus_{\lambda} s^{(2+2N)|\lambda| - 4\sum_i i\lambda_i} \mathbf{H}_*(B_\lambda)
	\]where the direct sum ranges over all partitions $\lambda$ with at most $k$ parts.
\end{cor}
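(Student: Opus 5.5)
The plan is to derive the corollary from Theorem~\ref{thm:ZillerThm} by regrading, and to reduce the resulting shift to the root-data formulas for $\mu$ and $\nu$ quoted above.

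\textbf{Regrading.} Put $\dim M = 2k(N-k)$. With the conventions $\mathbf{H}_*(LM) = H_{*+\dim M}(LM)$ and $s^i\mathbf{H}_*(B_\lambda) = H_{*-i+\dim B_\lambda}(B_\lambda)$, Theorem~\ref{thm:ZillerThm} gives
\[
	\mathbf{H}_*(LM) \cong \bigoplus_\lambda H_{*+\dim M-\mu(\gamma_\lambda)}(B_\lambda).
\]
This is $\bigoplus_\lambda s^{i_\lambda}\mathbf{H}_*(B_\lambda)$ exactly when
\[
	i_\lambda = \mu(\gamma_\lambda) + \dim B_\lambda - \dim M.
\]
Nondegeneracy gives $\dim B_\lambda = \nu(\gamma_\lambda)$, so $i_\lambda = \mu(\gamma_\lambda)+\nu(\gamma_\lambda)-\dim M$. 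Adding Ziller's formulas, the terms $-\dim\fk{m}_\alpha$ and $+\dim\fk{m}_\alpha$ cancel, leaving
\[
	i_\lambda = \sum_{\alpha\in R_+,\ \alpha(x_\lambda)>0} \alpha(x_\lambda)\dim\fk{m}_\alpha .
\]
Since $x_\lambda \in \ol{F}$, every $\alpha\in R_+$ satisfies $\alpha(x_\lambda)\ge 0$. Terms with $\alpha(x_\lambda)=0$ contribute nothing, so the positivity restriction can be dropped and the sum runs over all of $R_+$.

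\textbf{Computing the sum.} Using $x_\lambda=\sum_i\lambda_i\pi E_{ii}$ and the explicit root spaces, I will add the four families of $R_+$.
\begin{itemize}
	\item[$\bullet$] $\alpha_j(x_\lambda)=2\lambda_j$ with $\dim\fk{m}_{\alpha_j}=1$, contributing $2|\lambda|$.
	\item[$\bullet$] $\beta_j(x_\lambda)=\lambda_j$ with $\dim \fk{m}_{\beta_j}=2(N-2k)$, contributing $2(N-2k)|\lambda|$. This holds even when $N=2k$, where the contribution is $0$.
	\item[$\bullet$] $\gamma_{ij}(x_\lambda)=\lambda_i+\lambda_j$ with dimension $2$. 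Summing over $i<j$, each $\lambda_l$ appears $k-1$ times, contributing $2(k-1)|\lambda|$.
	\item[$\bullet$] $\delta_{ij}(x_\lambda)=\lambda_i-\lambda_j$ with dimension $2$. Summing over $i<j$, $\lambda_i$ appears with coefficient $(k-i)-(i-1)$, contributing $2\sum_i(k+1-2i)\lambda_i = 2(k+1)|\lambda| - 4\sum_i i\lambda_i$.
\end{itemize}
The total is
\[
	\bigl(2+2N-4k+2k-2+2k+2\bigr)|\lambda| - 4\sum_i i\lambda_i = (2N+2)|\lambda| - 4\sum_i i\lambda_i,
\]
which is the claimed shift.

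\textbf{Where errors could enter.} Nothing here is deep. The only step needing care is the bookkeeping: the dimensions of the $\fk{m}_\alpha$ must correctly account for $\fk{m}$ of dimension $2k(N-k)$, and the coefficient count in the $\delta_{ij}$ sum must be right. As a sanity check, I will verify that the dimensions add up, $k+k+2k(N-2k)+2k(k-1)=2k(N-k)$. I will also check the case $k=1$, $N=2$, where the shift is $2l$ on the $l$th summand, matching equation~(\ref{eq:LS2gradings}).
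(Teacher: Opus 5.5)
Your proposal is correct and follows the paper's proof essentially verbatim. Like the paper, you regrade Theorem~\ref{thm:ZillerThm} to get the shift $\mu(\gamma_\lambda)+\nu(\gamma_\lambda)-\dim M=\sum_{\alpha\in R_+}\alpha(x_\lambda)\dim\fk{m}_\alpha$ and evaluate it family by family; the only difference is cosmetic (the paper pairs the $\gamma_{ij}$ and $\delta_{ij}$ terms into $4(k-i)\lambda_i$ for each $i$, you sum each family separately), and your remark that $x_\lambda\in\ol{F}$ justifies dropping the condition $\alpha(x_\lambda)>0$ spells out a step the paper leaves implicit.
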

\begin{proof}
	By Theorem~\ref{thm:ZillerThm}, we have $\mathbf{H}_*(LM) \cong \bigoplus_\lambda s^{\mu(\gamma_\lambda)+\nu(\gamma_\lambda)-\dim M} \mathbf{H}_*(B_\lambda)$. Using the formulas from \cite[Theorem 3]{MR649625} stated above before Proposition~\ref{prop:BlambdaAsUnitaryOrbit}, we have \begin{align*}
		\mu(\gamma_\lambda) + \nu(\gamma_\lambda) - \dim M &= \sum_{\alpha\in R_+} \alpha(x_\lambda)\cdot\dim\fk{m}_\alpha\\
		&= \sum_{i=1}^k \alpha_i(x_\lambda) + 2(N-2k)\sum_{i=1}^k \beta_i(x_\lambda) + 2\sum_{i<j} \gamma_{ij}(x_\lambda) + 2\sum_{i<j} \delta_{ij}(x_\lambda)\\
		&= \sum_{i=1}^k \left(2\lambda_i + 2(N-2k) \lambda_i + \sum_{j=i+1}^k 2(\lambda_i + \lambda_j) + 2(\lambda_i - \lambda_j)\right) = \sum_{i=1}^k 2\lambda_i(N+1-2i).\qedhere
	\end{align*}
\end{proof}

\section{Colored link homology of two-stranded torus knots}\label{sec:colored_link_homology_of_two_stranded_torus_knots}

In this section, we compute the $k$-colored $\sl(N)$ homology of $T(2,2m+1)$ and $T(2,\infty)$. We construct a chain complex $\mathrm{F}_\lambda$ of $H^*(\BU(N))$-modules for each partition $\lambda$ with at most $k$ parts in Definition~\ref{def:Flambda}. The complex $\mathrm{F}_\lambda$ is essentially defined as a subcomplex of another complex $C^r$, defined in Definition~\ref{def:complexC}, where $r$ is the number of parts of $\lambda$. In Proposition~\ref{prop:Flambdafreeresolution}, we show that $\mathrm{F}_\lambda$ is a free resolution of $H^*(\mathrm{B}J_\lambda)$. A key part of the proof is an argument that $\mathrm{F}_\lambda$ is a direct summand of $C^r$, which relies on a nil-Hecke action on $C^r$ which we construct in Appendix~\ref{sec:nil_hecke_action}. In Proposition~\ref{prop:slNcomplexT2m}, we identify the colored $\sl(N)$ complexes of $T(2,2m+1)$ and $T(2,\infty)$ with direct sums of the complexes $\mathrm{F}_\lambda \otimes_{H^*(\BU(N))} \Z$ with suitable grading shifts. We use and assume familiarity with the version of colored $\sl(N)$ homology defined by foams \cite{MR3545951,MR4164001}. An exposition of the construction is provided in \cite[section 2]{MR4903257}. The special case of Proposition~\ref{prop:slNcomplexT2m} for the trefoil $T(2,3)$ is essentially the main result of \cite{MR4903257}, which used the simplified Rickard complex for the full twist from \cite{hogancamp2021skein}. The new input needed to prove Proposition~\ref{prop:slNcomplexT2m} for $T(2,2m+1)$ is the simplified Rickard complex for an arbitrary power of the full twist, constructed in \cite{wang2025minimalrickardcomplexesbraids}. 

Recall that the nil-Hecke algebra $\cl{H}_r$ is the endomorphism ring of the polynomial ring $\Z[x_1,\ldots,x_r]$, viewed as a module over the ring of symmetric polynomials $\Z[x_1,\ldots,x_r]^{\fk{S}_r}$. As an algebra, it is generated by $x_1,\ldots,x_r,\partial_1,\ldots,\partial_{r-1}$ where $\partial_i$ is the \textit{divided difference operator} or \textit{Demazure operator} given by $\partial_i = (\Id - \,s_i)/(x_i - x_{i+1})$ where $s_i$ is the simple transposition swapping $x_i$ and $x_{i+1}$. Note that $s_i = \Id - \,(x_i - x_{i+1})\partial_i = [\partial_i,x_i] = -[\partial_i,x_{i+1}] \in \cl{H}_r$. 

Next, if $\mathbf{X} = \{x_1,\ldots,x_r\}$ is a collection of indeterminates, we use the notational shorthand $\Z[\mathbf{X}] \coloneqq \Z[x_1,\ldots,x_r]$ and $\Sym(\mathbf{X}) \coloneqq \Z[x_1,\ldots,x_r]^{\fk{S}_r}$. Also let $e_i(\mathbf{X}),h_i(\mathbf{X}) \in \Sym(\mathbf{X})$ denote the elementary and complete homogeneous symmetric polynomials, respectively. If $\mathbf{Y} = \{y_1,\ldots,y_m\}$ is another collection of indeterminates, we set \begin{align*}
	e_i(\mathbf{X} - \mathbf{Y}) &\coloneqq \sum_{j=0}^i (-1)^j e_{i-j}(\mathbf{X})h_j(\mathbf{Y}) & h_i(\mathbf{X} - \mathbf{Y}) &\coloneqq \sum_{j=0}^i (-1)^j h_{i-j}(\mathbf{X})e_j(\mathbf{Y}).
\end{align*}
We note that $e_i(\mathbf{X} - \mathbf{Y}) = (-1)^i h_i(\mathbf{Y} - \mathbf{X}) \in \Sym(\mathbf{X}) \otimes \Sym(\mathbf{Y})$. For more details, see for example \cite{hogancamp2021skein,wang2025minimalrickardcomplexesbraids}.

\begin{df}\label{def:complexC}
	Fix $r \ge 0$, and let $\mathbf{X} = \{x_1,\ldots,x_r\}$ and $\mathbf{Y} = \{y_1,\ldots,y_r\}$ be collections of $r$ indeterminates. Then let $R \coloneqq \Z[\mathbf{X}] \otimes \Sym(\mathbf{Y}) = \Z[x_1,\ldots,x_r] \otimes \Z[y_1,\ldots,y_r]^{\fk{S}_r}$ where $\deg(x_i) = \deg(y_i) = q^2$. We define a chain complex $C^r$ modeled on the $r$-dimensional cube $[0,1]^r$. For each vertex $v = (v_1,\ldots,v_r) \in \{0,1\}^r$, let $R(v) \coloneqq q^{2\sum_i iv_i}R$. Set \[
		C^r \coloneqq \bigoplus_{v \in \{0,1\}^{r}} t^{-\sum_i v_i}R(v).
	\]To define the differential $d$, first let $\cl{H}_r$ act $\Sym(\mathbf{Y})$-linearly on $R$ by its action on $\Z[\mathbf{X}]$. Set $Q_r \coloneqq e_r(\mathbf{Y} - x_r) \in R$. For $v_1,\ldots,v_{i-1},v_{i+1},\ldots,v_r\in \{0,1\}$, the component of $d$ from $R(v_1,\ldots,v_{i-1},1,v_{i+1},\ldots,v_r)$ to $R(v_1,\ldots,v_{i-1},0,v_{i+1},\ldots,v_r)$ is defined to be the composite\[
		\theta_{i}\,\theta_{i+1}\cdots\theta_{r-1}\,Q_r\,\hat{\theta}_{r-1}\cdots\hat{\theta}_{i+1}\,\hat{\theta}_i \qquad\text{where}\quad \theta_j \coloneqq \begin{cases}
			\partial_j & v_{j+1} = 0\\
			s_j & v_{j+1} = 1
		\end{cases}\quad\text{ and }\quad\hat{\theta}_j \coloneqq \begin{cases}
			s_j & v_{j+1} = 0\\
			\partial_j & v_{j+1} = 1.
		\end{cases}
	\]All other components are zero. See Figure~\ref{fig:examplesOfComplexC} for examples of $C^r$ when $r \in \{1,2,3\}$. The fact that $d^2 = 0$ follows from the center square case of the proof of \cite[Proposition 3.11]{wang2025minimalrickardcomplexesbraids}. 
\end{df}

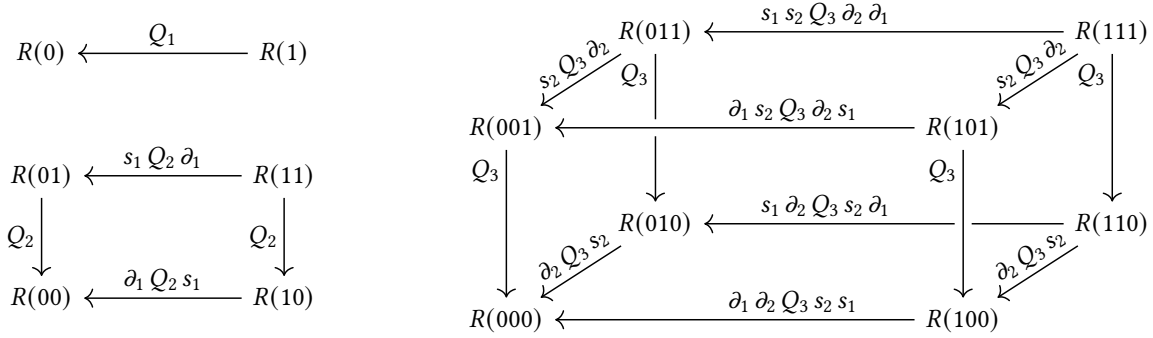
\begin{figure}[!ht]
	\centering
	\vspace{-15pt}\[
		\begin{tikzcd}[column sep=60pt,row sep=30pt]
			R(0) & R(1) \ar[l,swap,"\textstyle Q_1"]\\
			R(01) \ar[d,swap,"\textstyle Q_2"] & R(11) \ar[l,swap,"\textstyle{s_1\,Q_2\,\partial_1}"] \ar[d,swap,"\textstyle Q_2"] \\
			R(00) & R(10) \ar[l,swap,"\textstyle{\partial_1\,Q_2\,s_1}"]
		\end{tikzcd} \hspace{50pt} \begin{tikzcd}[column sep=20pt,row sep=20pt]
			& R(011) \ar[dd,swap,"\textstyle Q_3" yshift=20pt] \ar[dl,sloped,"\textstyle s_2\,Q_3\,\partial_2" xshift=1pt] & & & & & R(111) \ar[lllll,swap,"\textstyle s_1\,s_2\, Q_3\,\partial_2\,\partial_1" xshift=-22pt] \ar[dd,swap,"\textstyle Q_3" yshift=20pt] \ar[dl,sloped,"\textstyle s_2\,Q_3\,\partial_2" xshift=1pt]\\
			R(001) \ar[dd,swap,"\textstyle Q_3" yshift=20pt] & & & & & R(101) \ar[lllll,swap,crossing over,"\textstyle \partial_1\,s_2\, Q_3\,\partial_2\,s_1" xshift=22pt] &\\
			& R(010) \ar[dl,sloped,"\textstyle \partial_2\,Q_3\,s_2" xshift=1pt] & & & & & R(110) \ar[lllll,swap,"\textstyle s_1\,\partial_2\, Q_3\,s_2\,\partial_1" xshift=-22pt] \ar[dl,sloped,"\textstyle \partial_2\,Q_3\,s_2" xshift=1pt]\\
			R(000) & & & & & R(100) \ar[lllll,swap,"\textstyle \partial_1\,\partial_2\, Q_3\,s_2\,s_1" xshift=22pt] \ar[from=uu,crossing over,swap,"\textstyle Q_3" yshift=20pt]  &
		\end{tikzcd}\vspace{-10pt}
	\]
	\captionsetup{width=.8\linewidth}
	\caption{The complex $C^r$ for $r = 1$ in the top left, $r=2$ in the bottom left, and $r=3$ on the right. Cohomological degree shifts $t^{-\sum_i v_i}$ are omitted for brevity.}
	\label{fig:examplesOfComplexC}
\end{figure}

There is an action of the nil-Hecke algebra $\cl{H}_r$ on $C^r$ given by its action on each $R(v)$ for $v\in \{0,1\}^r$. However, this action is not by chain maps. We now define a different action of $\cl{H}_r$ on $C^r$ by chain maps. We let $x_1,\ldots,x_r,\partial_1,\ldots,\partial_{r-1},s_1,\ldots,s_{r-1}$ denote the action of $\cl{H}_r$ on each $R(v)$, and we let $X_1,\ldots,X_r,\Delta_1,\ldots,\Delta_{r-1},S_1,\ldots,S_{r-1}$ denote the action of $\cl{H}_r$ on $C^r$ by chain maps. 

\begin{df}\label{def:nilHeckeActionOnC}
 	Fix $r \ge 0$, and let $C^r$ be the complex of Definition~\ref{def:complexC}. \begin{itemize}[noitemsep]
 		\item[$\bullet$] For $i\in\{1,\ldots,r-1\}$, we define $\Delta_i\colon C^r \to C^r$ by components. Let $v$ be a vertex. If $v_i = v_{i+1}$, let the component of $\Delta_i$ from $R(v)$ to $R(v)$ be $\partial_i$. If $(v_i,v_{i+1}) = (0,1)$, let $w$ be the vertex obtained from $v$ by swapping its $i$th and $(i+1)$th entries. Let the component of $\Delta_i$ from $R(v)$ to $R(w)$ be $s_i$. All other components are zero.
 		\item[$\bullet$] For $i \in \{1,\ldots,r\}$, we define a map $\xi_i\colon C^r \to C^r$ and then set $X_i = x_i + \xi_i$. The map $x_i$ is just multiplication by $x_i$ on each $R(v)$. The map $\xi_i$ is defined by components as follows. Let $v$ be a vertex, and suppose $v_i = 1$ and $j$ is an index for which $v_j=0$ and $i < j$. Let $w$ be the vertex obtained from $v$ by swapping its $i$th and $j$th entries. The component of $\xi_i$ from $R(v)$ to $R(w)$ is defined to be \[
 			\theta_i\,\theta_{i+1}\cdots\theta_{j-2}\,s_{j-1}\,\hat{\theta}_{j-2}\cdots\hat{\theta}_{i+1}\,\hat{\theta}_i \qquad\text{where}\quad \theta_l \coloneqq \begin{cases}
				\partial_l & v_{l+1} = 0\\
				s_l & v_{l+1} = 1
			\end{cases}\quad\text{ and }\quad\hat{\theta}_l \coloneqq \begin{cases}
				s_l & v_{l+1} = 0\\
				\partial_l & v_{l+1} = 1.
			\end{cases}
 		\]Suppose $v_i = 0$ and $j$ is an index for which $v_j=1$ and $j < i$. Let $w$ be the vertex obtained from $v$ by swapping its $i$th and $j$th entries. The component of $\xi_i$ from $R(v)$ to $R(w)$ is defined to be \[
 			-\,\theta_j\,\theta_{j+1}\cdots\theta_{i-2}\,s_{i-1}\,\hat{\theta}_{i-2}\cdots\hat{\theta}_{j+1}\,\hat{\theta}_j \qquad\text{where}\quad \theta_l \coloneqq \begin{cases}
				\partial_l & v_{l+1} = 0\\
				s_l & v_{l+1} = 1
			\end{cases}\quad\text{ and }\quad\hat{\theta}_l \coloneqq \begin{cases}
				s_l & v_{l+1} = 0\\
				\partial_l & v_{l+1} = 1.
			\end{cases}
 		\]All other components are zero. 
 		\item[$\bullet$] For $i \in \{1,\ldots,r-1\}$, set $S_i = [\Delta_i, X_i] = \Delta_i\,X_i - X_i\,\Delta_i$. 
 	\end{itemize}See Figure~\ref{fig:nilHeckeAction} for explicit examples of $\Delta_i$ and $\xi_i$. The fact that $X_1,\ldots,X_r,\Delta_1,\ldots,\Delta_{r-1}$ define chain maps and satisfy the relations of the nil-Hecke algebra is proved in Appendix~\ref{sec:nil_hecke_action}. From this fact, it follows that $S_1,\ldots,S_{r-1}$ are chain maps that determine an action of the symmetric group $\fk{S}_r$ on $C^r$ by chain maps.
\end{df}

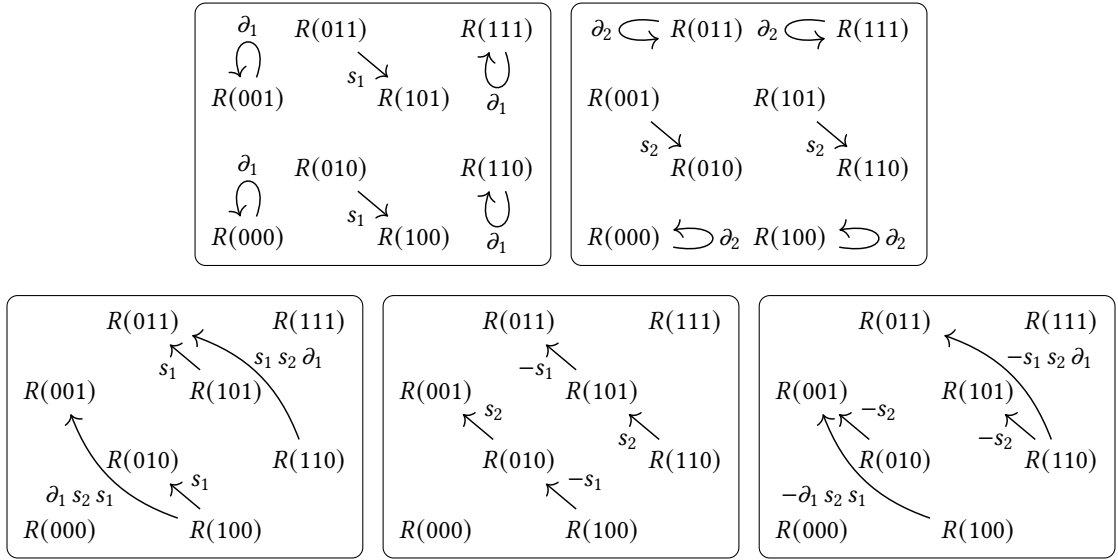
\begin{figure}[!ht]
	\centering
	\vspace{-10pt}\[
		\begin{tikzpicture}
			\node[draw,rounded corners,inner sep=2pt] at (0,0) {
				\begin{tikzcd}[column sep=-5pt,row sep=10pt]
					& R(011) \ar[dr,swap,"\textstyle s_1"] & & R(111) \ar[out=290,in=250,distance=50pt,loop,"\textstyle\partial_1"]\\
					R(001) \ar[out=70,in=110,distance=50pt,loop,swap,"\textstyle\partial_1"] & & R(101) &\\
					& R(010) \ar[dr,swap,"\textstyle s_1"] & & R(110) \ar[out=290,in=250,distance=50pt,loop,"\textstyle\partial_1"]\\
					R(000) \ar[out=70,in=110,distance=50pt,loop,swap,"\textstyle\partial_1"] & & R(100) &
				\end{tikzcd}
			};
			\node[draw,rounded corners,inner sep=2pt] at (5,0) {
				\begin{tikzcd}[column sep=-5pt,row sep=10pt]
					& R(011) \ar[out=170,in=190,distance=50pt,loop,swap,"\textstyle\partial_2"] & & R(111) \ar[out=170,in=190,distance=50pt,loop,swap,"\textstyle\partial_2"]\\
					R(001) \ar[rd,swap,"\textstyle s_2"] & & R(101) \ar[rd,swap,"\textstyle s_2"] &\\
					& R(010) & & R(110) \\
					R(000) \ar[out=-10,in=10,distance=50pt,loop,swap,"\textstyle\partial_2"] & & R(100) \ar[out=-10,in=10,distance=50pt,loop,swap,"\textstyle\partial_2"] &
				\end{tikzcd} 
			};
		\end{tikzpicture}
	\]\[
		\begin{tikzpicture}
			\node[draw,rounded corners,inner sep=2pt] at (0,0) {
				\begin{tikzcd}[column sep=-5pt,row sep=10pt]
					& R(011) & & R(111)\\
					R(001) & & R(101) \ar[lu,"\textstyle s_1"] &\\
					& R(010) & & R(110) \ar[lluu,bend right=25pt,swap,"\textstyle s_1\,s_2\,\partial_1" xshift=-4pt] \\
					R(000) & & R(100) \ar[lu,swap,"\textstyle s_1"] \ar[lluu,bend left=25pt,"\textstyle \partial_1\,s_2\,s_1" xshift=4pt] &
				\end{tikzcd}
			};
			\node[draw,rounded corners,inner sep=2pt] at (5,0) {
				\begin{tikzcd}[column sep=-5pt,row sep=10pt]
					& R(011) & & R(111)\\
					R(001) & & R(101) \ar[lu,"\textstyle -s_1"] &\\
					& R(010) \ar[lu,swap,"\textstyle s_2"] & & R(110) \ar[lu,"\textstyle s_2"] \\
					R(000) & & R(100) \ar[lu,swap,"\textstyle -s_1"] &
				\end{tikzcd} 
			};
			\node[draw,rounded corners,inner sep=2pt] at (10,0) {
				\begin{tikzcd}[column sep=-5pt,row sep=10pt]
					& R(011) & & R(111)\\
					R(001) & & R(101) &\\
					& R(010)  \ar[lu,swap,"\textstyle -s_2"] & & R(110) \ar[lu,"\textstyle -s_2"]  \ar[lluu,swap,bend right=25pt,"\textstyle -s_1\,s_2\,\partial_1" xshift=-4pt] \\
					R(000) & & R(100) \ar[lluu,bend left=25pt,"\textstyle -\partial_1\,s_2\,s_1" xshift=4pt] &
				\end{tikzcd}
			};
		\end{tikzpicture}\vspace{-10pt}
	\]
	\captionsetup{width=.8\linewidth}
	\caption{The chain endomorphisms $\Delta_1,\Delta_2$ of $C^3$ are in the top row, and the endomorphisms $\xi_1,\xi_2,\xi_3$ of $C^3$ are in the bottom row. The sum $X_i = x_i + \xi_i$ is a chain endomorphism.}
	\label{fig:nilHeckeAction}
\end{figure}

The polynomial ring $\Z[\mathbf{X}]$, as a graded module over $\Sym(\mathbf{X})$, is free with graded rank given by the $q$-factorial $(r)! = (r)(r-1)\cdots(1)$ where $(m)$ denotes the $q$-integer $1 + q^2 + \cdots + q^{2m-2} \in \Z[q]$. This $q$-polynomial is precisely the Poincar\'e polynomial of the flag manifold of $\C^r$. Given any composition $(r_1,\ldots,r_m)$ of $r$, the subgroup $\fk{S}_{r_1} \x \cdots \x \fk{S}_{r_m} \subseteq \fk{S}_r$ determines a ring of partially symmetric polynomials $\Sym(\mathbf{X}) \subseteq \Z[x_1,\ldots,x_r]^{\fk{S}_{r_1} \x \cdots \x\fk{S}_{r_m}}\subseteq \Z[\mathbf{X}]$ that is free over $\Sym(\mathbf{X})$ whose graded rank coincides with the Poincar\'e polynomial of the partial flag manifold $\mathrm{Fl}(r_1,\ldots,r_m;\C^r)$ of pairwise orthogonal $m$-tuples $(\Lambda_1,\ldots,\Lambda_m)$ of linear subspaces of $\C^r$ of dimensions $(r_1,\ldots,r_m)$. 

The purpose of constructing an action of the nil-Hecke algebra $\cl{H}_r$ on the complex $C^r$ is to show that it has the same structure. In particular, $C^r$ is isomorphic to the direct sum of $(r)!$ copies of a smaller complex, which we denote by $B^r$. Furthermore, it has a system of subcomplexes $B^{r_1,\ldots,r_m}$ indexed by compositions of $r$, each of which is a direct sum of shifted copies of $B^r$ according to the Poincar\'e polynomial of $\mathrm{Fl}(r_1,\ldots,r_m;\C^r)$. The subcomplex $B^{r_1,\ldots,r_m}$ is just the set of invariants under the action of $\fk{S}_{r_1} \x \cdots \x \fk{S}_{r_m} \subseteq \fk{S}_r$, but described explicitly in the following definition. 

\begin{df}\label{def:subcomplexB}
	Let $(r_1,\ldots,r_m)$ be a sequence of positive integers, with $r = r_1 + \cdots + r_m$, and let $C^r$ be the complex of Definition~\ref{def:complexC}. Let $B^{r_1,\ldots,r_m}$ be the following subcomplex of $C^r$, defined by specifying a submodule $B^{r_1,\ldots,r_m}(v) \subseteq R(v)$ for each vertex $v$. For each $m$-tuple $(a_1,\ldots,a_m)$ of integers satisfying $0 \leq a_i \leq r_i$, let $v \in \{0,1\}^r$ be the sequence $(1^{a_1},0^{r_1-a_1},1^{a_2},0^{r_2-a_2},\ldots,1^{a_m},0^{r_m-a_m})$. Let $B^{r_1,\ldots,r_m}(v) \subseteq R(v)$ be the subring of polynomials in $R(v)$ invariant under \[
		\fk{S}_{a_1} \x \fk{S}_{r_1-a_1} \x \cdots \x\fk{S}_{a_m} \x \fk{S}_{r_m-a_m} \subseteq \fk{S}_{r}
	\]where $\fk{S}_r$ acts on $R(v)$ in the standard way. If $v \in \{0,1\}^r$ is not of the form described, we set $B^{r_1,\ldots,r_m}(v) = 0$. The fact that $B^{r_1,\ldots,r_m}$ is a subcomplex follows from Lemma~\ref{lem:Bsubcomplexinvariants}.
\end{df}

\begin{lem}\label{lem:Bsubcomplexinvariants}
	For any composition $(r_1,\ldots,r_m)$ of $r$, the subcomplex of elements of $C^r$ that are invariant under the chain action of $\fk{S}_{r_1} \x \cdots \x \fk{S}_{r_m} \subseteq \fk{S}_r$ is precisely $B^{r_1,\ldots,r_m}$.
\end{lem}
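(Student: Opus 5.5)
The plan is to reduce to the generating transpositions. The chain action of $\fk{S}_r$ is generated by $S_1,\ldots,S_{r-1}$, and $\fk{S}_{r_1}\x\cdots\x\fk{S}_{r_m}$ is generated by those $S_i$ with $i$ not equal to a partial sum $r_1+\cdots+r_l$. So an element $c=(c_v)_v$ of $C^r$ is invariant exactly when $S_ic=c$ for each such internal index $i$. The first step is therefore to compute $S_i=[\Delta_i,X_i]=\Delta_i x_i-x_i\Delta_i+\Delta_i\xi_i-\xi_i\Delta_i$ explicitly, component by component.

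The explicit formula should come from a case analysis on the pair $(v_i,v_{i+1})$.
\begin{itemize}
\item If $v_i=v_{i+1}$, then $\Delta_i$ is $\partial_i$ on $R(v)$, and $\xi_i$ only involves swaps of position $i$ with some $j>i$ or $j<i$. In the nil-Hecke algebra $[\partial_i,x_i]=s_i$, so the diagonal part of $S_i$ on $R(v)$ is $s_i$.
\item If $(v_i,v_{i+1})=(0,1)$ or $(1,0)$, then $\Delta_i$ and $\xi_i$ move between $R(v)$ and $R(w)$, where $w$ swaps entries $i$ and $i+1$. Here I expect $S_i$ to have a component $R(v)\to R(w)$ equal to $\pm s_i$ (or $s_i$ composed with something simple), together with possibly a diagonal part.
\item The remaining off-diagonal terms, involving $\xi_i$ components to far vertices, should cancel. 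Confirming this uses the nil-Hecke relations verified in Appendix~\ref{sec:nil_hecke_action}.
\end{itemize}
The key qualitative claim I aim for is the following. $S_i$ maps $R(v)$ to $R(v)\oplus R(s_iv)$. On vertices with $v_i=v_{i+1}$ it acts as $s_i$. On vertices with $v_i\ne v_{i+1}$ it is an isomorphism interchanging the pieces $R(v)$ and $R(s_iv)$, with no diagonal part.

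Granting this, invariance is analyzed blockwise. Fix an internal index $i$.
\begin{itemize}
\item If $v_i=v_{i+1}$, invariance of $c$ under $S_i$ forces $c_v$ to be $s_i$-invariant.
\item If $v_i\ne v_{i+1}$, invariance only relates $c_v$ and $c_{s_iv}$. Suppose $v$ has $(v_i,v_{i+1})=(0,1)$ inside a block. Then each such coordinate is determined by its swapped partner.
\end{itemize}
Combining all internal $i$ in a block, the data of an invariant element is determined by its values on vertices that are "sorted" within each block, namely $v=(1^{a_1}0^{r_1-a_1},\ldots)$. On those vertices, invariance under the $S_i$ with $v_i=v_{i+1}$ gives exactly invariance under $\fk{S}_{a_1}\x\fk{S}_{r_1-a_1}\x\cdots$.

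This blockwise analysis does not by itself rule out nonzero coordinates at unsorted vertices. To do that, I would show that when $(v_i,v_{i+1})=(0,1)$ the relevant component of $S_i$ has the form $s_i$ in one direction and $\pm s_i$ back, arranged so that invariance forces $c_w$ and $c_v$ to correspond. Reconciling this with Definition~\ref{def:subcomplexB}, which sets coordinates at unsorted vertices to zero, is the main obstacle. I expect the resolution is that the components of $S_i$ between $R(v)$ and $R(s_iv)$ are one-directional: $\Delta_i$ only goes from $(0,1)$ to $(1,0)$, and $\xi_i$ has components only in the direction matching it. In that case $S_i$ is triangular with zero diagonal block at the $(0,1)$-vertex, and $S_ic=c$ forces $c_v=0$ there and $c_{s_iv}=\pm s_i$ applied to the relevant coordinate. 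Iterating over internal $i$ then kills all unsorted vertices.

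The final step is the converse inclusion. Every element of $B^{r_1,\ldots,r_m}$ is fixed by each internal $S_i$. This follows from the same component formulas, since the only possibly nonzero components of $S_ic-c$ on such $c$ are $s_ic_v-c_v$ at sorted vertices, and these vanish by the partial symmetry. Since the $S_i$ are chain maps, the invariants form a subcomplex, so this also yields that $B^{r_1,\ldots,r_m}$ is a subcomplex.
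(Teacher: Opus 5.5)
\textbf{There is a genuine gap.} The component description of $S_i$ on which both of your inclusions rest is false, and the far $\xi$-terms do not cancel.

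For $r=2$, $i=1$, a direct computation from Definition~\ref{def:nilHeckeActionOnC} gives $S_1|_{R(10)}=\Id$. On $R(01)$, $S_1$ has diagonal component $-\Id$ and a component $(x_2-x_1)s_1$ into $R(10)$. So $S_1$ does not interchange $R(01)$ and $R(10)$. Its diagonal block at the $(0,1)$-vertex is $-\Id$, not zero. A zero block is in any case impossible for a triangular map with $S_1^2=\Id$.

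For $r=3$, $i=1$, two further components appear:
\begin{itemize}
\item $S_1$ sends $R(110)$ into $R(110)\oplus R(101)$, with components $s_1$ and $s_2\,\partial_1$;
\item $S_1$ sends $R(010)$ into $R(010)\oplus R(100)\oplus R(001)$, with components $-\Id$, $(x_2-x_1)s_1$ and $-\partial_1\,s_2$.
\end{itemize}
So $S_i$ does not preserve $R(v)\oplus R(s_iv)$, and it is not just $s_i$ on vertices with $v_i=v_{i+1}$. Two of your steps depend on the incorrect picture. Your mechanism for killing coordinates at unsorted vertices assumes a zero diagonal block. Your converse inclusion assumes the only components of $S_ic-c$ are $s_ic_v-c_v$ at sorted vertices. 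A direct analysis along your lines would have to track all of these extra components. Your reduction to the internal generators, and the remark that invariants of chain maps form a subcomplex, are fine.

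\textbf{The missing idea is to avoid computing $S_i$ altogether.} By Lemma~\ref{lem:twistedLeibnizRule},
\[
S_i=\Delta_iX_i-X_i\Delta_i=\Id-(X_i-X_{i+1})\Delta_i .
\]
Moreover, $X_i-X_{i+1}$ is injective on $C^r$. Each $\xi_l$ moves a $1$ to a later position, so it strictly increases $\sum_l l\,v_l$. Hence $X_i-X_{i+1}$ is triangular for this weight, with diagonal part multiplication by the nonzerodivisor $x_i-x_{i+1}$. It follows that $S_ic=c$ if and only if $\Delta_ic=0$.

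It is $\Delta_i$, not $S_i$, that has the clean shape you wanted. Its only components are $\partial_i\colon R(v)\to R(v)$ when $v_i=v_{i+1}$, and $s_i\colon R(v)\to R(s_iv)$ when $(v_i,v_{i+1})=(0,1)$. Distinct sources have distinct targets, so $\ker\Delta_i$ splits vertexwise:
\begin{itemize}
\item it is the $s_i$-invariants of $R(v)$ when $v_i=v_{i+1}$;
\item it is zero when $(v_i,v_{i+1})=(0,1)$, since $s_i$ is injective;
\item it is all of $R(v)$ when $(v_i,v_{i+1})=(1,0)$.
\end{itemize}
Intersecting over the internal indices gives exactly $B^{r_1,\ldots,r_m}$, proving both inclusions at once. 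This is the paper's proof.
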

\begin{proof}
	Let $J = \{1,\ldots,r-1\}\setminus\{r_1,r_1+r_2,\ldots,r_1+\cdots+r_{m-1}\}$ so that the simple transpositions $s_j$ for $j \in J$ are precisely the ones that generate $\fk{S}_{r_1} \x \cdots \x \fk{S}_{r_m}$. Note that $c \in C^r$ satisfies $S_jc = c$ if and only if $\Delta_j c = 0$ because $X_j - X_{j+1}$ is injective. It suffices to show that \[
		B^{r_1,\ldots,r_m} = \bigcap_{j\in J} \ker(\Delta_j).
	\]From the definition of $\Delta_j$, it is clear that $c = \bigoplus_v c_v\in C^r$ lies in the kernel of $\Delta_j$ if and only if each $c_v$ lies in the kernel of $\Delta_j$, so it suffices to prove the identity at each vertex $v$ of the cube. If $(v_j,v_{j+1}) = (0,1)$ for some $j\in J$, then $\Delta_j$ is injective on $R(v)$ so both $B^{r_1,\ldots,r_m}(v)$ and $R(v) \cap \bigcap_j \ker(\Delta_j)$ are zero. Assume that $(v_j,v_{j+1}) \neq (0,1)$ for every $j \in J$, and observe that $v$ must be of the form $(1^{a_1},0^{r_1-a_1},\ldots,1^{a_m},0^{r_m-a_m})$ for some sequence $(a_1,\ldots,a_m)$ of integers satisfying $0 \leq a_i \leq r_i$. For $j\in J$, the action of $\Delta_j$ on $R(v)$ is $\partial_j$ except when $j \in \{a_1,r_1+a_2,\ldots,r_1+\cdots+r_{m-1}+a_m\}$ in which case $R(v) \cap \ker(\Delta_j)= R(v)$. Hence, $R(v) \cap \bigcap_j \ker(\Delta_j)$ is the set of invariants under $s_j$ for $j \in J\setminus \{a_1,r_1+a_2,\ldots,r_1+\cdots+r_{m-1}+a_m\}$, which is precisely $B^{r_1,\ldots,r_m}(v)$.
\end{proof}

Given a partition $\lambda$ with at most $\min(k,N-k)$ parts, we now define a chain complex $\mathrm{F}_\lambda$ of $H^*(\BU(N))$-modules. 

\begin{df}\label{def:Flambda}
	Let $\lambda$ be a partition with at most $\min(k,N-k)$ parts, and let $(r_1,\ldots,r_m)$ be the multiplicities of its parts, with $r \coloneqq r_1 + \cdots + r_m$. Let $\mathbf{Z} = \{z_1,\ldots,z_{k-r}\}$ and $\mathbf{W} = \{w_1,\ldots,w_{N-k-r}\}$ be sets of indeterminates. Let $\mathrm{F}_\lambda$ be the complex \[
		\mathrm{F}_\lambda \coloneqq B^{r_1,\ldots,r_m} \otimes \Sym(\mathbf{Z}) \otimes \Sym(\mathbf{W})
	\]given simply by extending scalars, where $B^{r_1,\ldots,r_m}$ is defined in Definition~\ref{def:subcomplexB}. The action of $H^*(\BU(N))$ on $\mathrm{F}_\lambda$ is given by identifying $H^*(\BU(N)) = \Sym(\mathbf{X} \sqcup \mathbf{Y} \sqcup \mathbf{Z} \sqcup \mathbf{W}) \subseteq \Sym(\mathbf{X}) \otimes \Sym(\mathbf{Y}) \otimes \Sym(\mathbf{Z}) \otimes \Sym(\mathbf{W})$.
\end{df}
\begin{prop}\label{prop:Flambdafreeresolution}
	For any partition $\lambda$ with at most $\min(k,N-k)$ parts, the complex $\mathrm{F}_\lambda$ is a $H^*(\BU(N))$-free resolution of $H^*(\mathrm{B}J_\lambda)$, where $J_\lambda$ is given in Proposition~\ref{prop:BlambdaAsUnitaryOrbit}.
\end{prop}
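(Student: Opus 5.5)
The plan is to reduce everything to the single complex $C^r$, compute its homology, and then pass to $\mathrm{F}_\lambda$ using the nil-Hecke action of Definition~\ref{def:nilHeckeActionOnC}.

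Concretely, I will show that $C^r$ has homology concentrated in cohomological degree $0$. There it should be $H_0(C^r)\cong \Z[\mathbf{X}]$, where $\Sym(\mathbf{Y})$ acts through $\Sym(\mathbf{Y})\cong\Sym(\mathbf{X})$, $y\mapsto x$. Tensoring with $\Sym(\mathbf{Z})\otimes\Sym(\mathbf{W})$ and taking invariants then identifies $H(\mathrm{F}_\lambda)$ with $\Z[\mathbf{X}]^{\fk{S}_{r_1}\x\cdots\x\fk{S}_{r_m}}\otimes\Sym(\mathbf{Z})\otimes\Sym(\mathbf{W})$. This is $H^*(\mathrm{B}J_\lambda)$ with the correct $H^*(\BU(N))$-module structure: the Chern roots of $\U(N)$ restricted to $J_\lambda$ are the Chern roots of each $U_i$ (counted twice, once for $\mathbf{X}$ and once for $\mathbf{Y}$), those of $V$ (the $\mathbf{Z}$), and those of $W$ (the $\mathbf{W}$).

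\textbf{Freeness.} Each $B^{r_1,\ldots,r_m}(v)$ is a ring of partially symmetric polynomials in $\mathbf{X}$, tensored with $\Sym(\mathbf{Y})$. After tensoring with $\Sym(\mathbf{Z})\otimes\Sym(\mathbf{W})$ it is free over $\Sym(\mathbf{X})\otimes\Sym(\mathbf{Y})\otimes\Sym(\mathbf{Z})\otimes\Sym(\mathbf{W})$. That ring is in turn free over $\Sym(\mathbf{X}\sqcup\mathbf{Y}\sqcup\mathbf{Z}\sqcup\mathbf{W})=H^*(\BU(N))$, since the ring of polynomials partially symmetric under a Young subgroup is free over the fully symmetric ones. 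Hence every term of $\mathrm{F}_\lambda$ is free, and tensoring with the free module $\Sym(\mathbf{Z})\otimes\Sym(\mathbf{W})$ commutes with homology.

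\textbf{Homology of $C^r$.} I will argue by induction on $r$, filtering by the last coordinate $v_r$. Every edge in direction $r$ is, by Definition~\ref{def:complexC}, multiplication by $Q_r=e_r(\mathbf{Y}-x_r)$, since the strings of $\theta$'s and $\hat\theta$'s are empty. This map is injective on $R$ because it is monic in $x_r$ up to sign. So $C^r$ is the cone of a map between the face complexes $\{v_r=1\}$ and $\{v_r=0\}$. I will try to identify, up to homotopy, the face $\{v_r=0\}$ modulo $Q_r$ with a copy of $C^{r-1}$ for the variables $x_1,\ldots,x_{r-1}$ and the alphabet $\mathbf{Y}$ with one root $x_r$ removed. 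Here one uses $e_r(\mathbf{Y}-x_r)=0$, so that $\mathbf{Y}$ formally splits as $\mathbf{Y}'\sqcup\{x_r\}$, via the standard identification $\Sym(\mathbf{Y})[x_r]/(e_r(\mathbf{Y}-x_r))\cong\Sym(\mathbf{Y}')[x_r]$. The base case $r=1$ is immediate: $C^1$ is $R\xrightarrow{y_1-x_1}R$, whose homology is $\Z[x_1]$ in degree $0$.

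The hard part is this inductive identification, because the differentials in directions $i<r$ involve $\theta_{r-1},\hat\theta_{r-1}$, which depend on $v_r$; the two faces are therefore not literally copies of $C^{r-1}$. If it becomes unwieldy, the fallback is a depth argument. First compute $H_0(C^r)$ directly as $R$ modulo the images of the $r$ edges into $R(0)$, which are $\partial_i\cdots\partial_{r-1}Q_r s_{r-1}\cdots s_i$. Then check that the graded Euler characteristic of $C^r$ equals the graded rank of $\Z[\mathbf{X}]$, and conclude acyclicity in higher degrees from the Buchsbaum--Eisenbud criterion, or from a generic-point argument after inverting the discriminant.

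\textbf{Passing to $\mathrm{F}_\lambda$.} Lemma~\ref{lem:Bsubcomplexinvariants} identifies $B^{r_1,\ldots,r_m}$ with the $\fk{S}_{r_1}\x\cdots\x\fk{S}_{r_m}$-invariants of $C^r$ under the chain action. Using the nil-Hecke relations proved in Appendix~\ref{sec:nil_hecke_action}, the element $\Delta_{w_0}$ of the parabolic subgroup composed with a suitable monomial in the $X_i$ gives a chain-level idempotent whose image is $B^{r_1,\ldots,r_m}$. So $B^{r_1,\ldots,r_m}$ is a direct summand of $C^r$, and its homology is the corresponding summand of $H(C^r)$, again concentrated in degree $0$.

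In degree $0$ the vertex is $v=0$, and there $\xi_i=0$ because it only has components out of vertices with some $v_i=1$. Hence $X_i$ acts as $x_i$ and $\Delta_i$ acts as $\partial_i$ on $H_0$. Therefore the invariants in $H_0(C^r)=\Z[\mathbf{X}]$ are exactly the partially symmetric polynomials $\Z[\mathbf{X}]^{\fk{S}_{r_1}\x\cdots\x\fk{S}_{r_m}}$, which is what the first paragraph requires.
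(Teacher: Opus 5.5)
Your outline agrees with the paper's proof at every step except the one that carries the weight: you never show that $C^r$ has homology concentrated in degree $0$, equal to $\Z[\mathbf{X}]$. You set up the right induction. You then call the identification of the $v_r=0$ face with $C^{r-1}$ the hard part and retreat to a fallback that does not work as stated. Equality of graded Euler characteristics cannot rule out cancelling higher homology. The Buchsbaum--Eisenbud criterion needs depth bounds on ideals of minors that you have not supplied. Exactness after inverting a discriminant says nothing about homology supported on the discriminant locus. As written, the proposal leaves the central computation open.

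The obstruction you anticipate does not exist, because only the $v_r=0$ face is needed.

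\emph{Reduction to one face.} Every direction-$r$ edge is multiplication by the nonzerodivisor $Q_r$. So $C^r$ is quasi-isomorphic to the quotient complex $\bigoplus_{v_r=0}R(v)/(Q_r)$. On this face $\theta_{r-1}=\partial_{r-1}$ and $\hat\theta_{r-1}=s_{r-1}$ for every edge. The face $v_r=1$, where the dependence on $v_r$ would matter, plays no role.

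\emph{The edges modulo $Q_r$.} The twisted Leibniz rule gives $\partial_{r-1}\,Q_r\,s_{r-1}=\partial_{r-1}(Q_r)-Q_r\,\partial_{r-1}$, with $\partial_{r-1}(Q_r)=e_{r-1}(\mathbf{Y}-\{x_{r-1},x_r\})$. The operators $s_j,\partial_j$ for $j\le r-2$ commute with multiplication by $Q_r$. Hence, under $\Sym(\mathbf{Y})[x_r]/(Q_r)\cong\Sym(\mathbf{Y}')\otimes\Z[x_r]$, the direction-$i$ edge reduces modulo $Q_r$ to $\theta_i\cdots\theta_{r-2}\,e_{r-1}(\mathbf{Y}'-x_{r-1})\,\hat\theta_{r-2}\cdots\hat\theta_i$.

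\emph{Closing the induction.} The quotient is therefore literally $C^{r-1}$ in $x_1,\ldots,x_{r-1}$ with alphabet $\mathbf{Y}'$, tensored with $\Z[x_r]$. Your induction then gives $H_0(C^r)\cong\Z[\mathbf{X}]$ with $e_i(\mathbf{Y})$ acting as $e_i(\mathbf{X})$.

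This is the paper's argument in inductive form. The paper unrolls the same steps as an iterated spectral sequence, using the regular sequence $Q_{r,j}=\partial_{r-j}(Q_{r,j-1})$ and the congruence $\partial_{r-j}\cdots\partial_{r-1}\,Q_r\,s_{r-1}\cdots s_{r-j}\equiv Q_{r,j}$ modulo $(Q_{r,0},\ldots,Q_{r,j-1})$.

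The rest of your proposal is sound and matches the paper: freeness, the idempotent splitting off $B^{r_1,\ldots,r_m}$, and the observation that $\xi_i$ vanishes on $R(0^r)$, so the induced $\cl{H}_r$-action on $H_0$ is the standard one.
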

\begin{proof}
	The fact that $\mathrm{F}_\lambda$ is free over $H^*(\BU(N))$ follows from the fact that $B^{r_1,\ldots,r_m}$ is free over $\Sym(\mathbf{X}) \otimes \Sym(\mathbf{Y})$ and the fact that $\Sym(\mathbf{X}) \otimes\Sym(\mathbf{Y}) \otimes \Sym(\mathbf{Z}) \otimes \Sym(\mathbf{W})$ is free over $\Sym(\mathbf{X} \sqcup \mathbf{Y} \sqcup\mathbf{Z} \sqcup\mathbf{W}) = H^*(\BU(N))$. 

	Write $\mathbf{X} = \mathbf{X}_1\sqcup \ldots \sqcup \mathbf{X}_m$ and $\mathbf{Y} = \mathbf{Y}_1\sqcup \ldots \sqcup \mathbf{Y}_m$ so that $\Z[\mathbf{X}]^{\fk{S}_{r_1} \x \cdots \x \fk{S}_{r_m}} = \Sym(\mathbf{X}_1) \otimes \cdots \otimes \Sym(\mathbf{X}_m)$ and $\Z[\mathbf{Y}]^{\fk{S}_{r_1} \x \cdots \x \fk{S}_{r_m}} = \Sym(\mathbf{Y}_1) \otimes \cdots \otimes \Sym(\mathbf{Y}_m)$. In particular, $|\mathbf{X}_j| = |\mathbf{Y}_j| = r_j$ for $j\in\{1,\ldots,m\}$. From the explicit description of $J_\lambda$ given in Proposition~\ref{prop:BlambdaAsUnitaryOrbit}, note that \[
		H^*(\mathrm{B}J_\lambda) = \frac{\Sym(\mathbf{X}_1) \otimes \cdots \otimes \Sym(\mathbf{X}_m) \otimes \Sym(\mathbf{Y}_1) \otimes \cdots \otimes \Sym(\mathbf{Y}_m) \otimes \Sym(\mathbf{Z}) \otimes \Sym(\mathbf{W})}{(e_i(\mathbf{X}_j) - e_i(\mathbf{Y}_j) \:|\: j \in \{1,\ldots,m\} \text{ and }i\in\{1,\ldots,r_j\})}
	\]as a module over $H^*(\BU(N)) = \Sym(\mathbf{X} \sqcup \mathbf{Y} \sqcup\mathbf{Z} \sqcup\mathbf{W})$. It therefore suffices to show that $B^{r_1,\ldots,r_m}$ is a resolution of \[
		\frac{\Sym(\mathbf{X}_1) \otimes \cdots\otimes \Sym(\mathbf{X}_m) \otimes \Sym(\mathbf{Y}_1) \otimes \cdots\otimes \Sym(\mathbf{Y}_m)}{(e_i(\mathbf{X}_j) - e_i(\mathbf{Y}_j) \:|\: j \in \{1,\ldots,m\} \text{ and }i\in\{1,\ldots,r_j\})} = \frac{\Z[\mathbf{X}]^{\fk{S}_{r_1} \x \cdots \x \fk{S}_{r_m}} \otimes \Sym(\mathbf{Y})}{(e_i(\mathbf{X}) - e_i(\mathbf{Y})\:|\: i\in\{1,\ldots,r\})}
	\]as a $\Sym(\mathbf{X}) \otimes \Sym(\mathbf{Y})$-module. This module is just $\Z[\mathbf{X}]^{\fk{S}_{r_1} \x \cdots \x \fk{S}_{r_m}}$ where $e_i(\mathbf{Y})$ acts by $e_i(\mathbf{X})$. 

	We first prove the result in the case that $(r_1,\ldots,r_m)=(1^r)$ so $B^{r_1,\ldots,r_m} = C^r$. Define $Q_{r,j} \in R = \Z[\mathbf{X}] \otimes \Sym(\mathbf{Y})$ for $j \in \{0,\ldots,r-1\}$ by setting $Q_{r,0} \coloneqq Q_r = e_r(\mathbf{Y} - x_r)$ and $Q_{r,j} = \partial_{r-j}(Q_{r,j-1})$ for $j > 0$. It is straightforward to show inductively that \[
		Q_{r,j} = e_{r-j}(\mathbf{Y} - \{x_{r-j},\ldots,x_r\}) = e_{r-j}(\mathbf{Y}) - e_{r-j-1}(\mathbf{Y})h_1(x_{r-j},\ldots,x_r) + \cdots + (-1)^{r-j}h_{r-j}(x_{r-j},\ldots,x_r).
	\]By considering the leading term of this expression, it follows that $Q_{r,0},Q_{r,1},\ldots,Q_{r,r-1} \in R$ form a regular sequence. Furthermore, the ideal generated by these $r$ polynomials is precisely $(e_i(\mathbf{X}) - e_i(\mathbf{Y}) \:|\: i\in\{1,\ldots,r\})$. Indeed, \[
		Q_{r,j} = e_{r-j}(\mathbf{Y} - \mathbf{X} + \{x_1,\ldots,x_{r-j-1}\}) = e_{r-j}(\mathbf{Y} - \mathbf{X}) + \sum_{i=1}^{r-j-1} e_{r-j-i}(\mathbf{Y} - \mathbf{X})e_i(x_1,\ldots,x_{r-j-1})
	\]shows that $(Q_{r,0},\ldots,Q_{r,r-1}) = (e_i(\mathbf{Y} -\mathbf{X}) \:|\: i\in\{1,\ldots,r\}) = (e_i(\mathbf{Y}) - e_i(\mathbf{X}) \:|\: i\in\{1,\ldots,r\})$. It therefore suffices to prove that the homology of $C^r$ is $R/(Q_{r,0},\ldots,Q_{r,r-1})$, supported in degree $0$. To do this, we iteratively apply the spectral sequence associated to a bicomplex. First, we take homology with respect to the components of the differential of $C^r$ that decrease the $r$th entry of $v$. Each such component is just multiplication by $Q_r = Q_{r,0}$ which is injective, so the homology is the direct sum $\bigoplus_{v \in \{0,1\}^{r-1} \x \{0\}} t^{-\sum_i v_i} \, R(v)/(Q_{r,0})$ over the vertices of the face $[0,1]^{r-1} \x \{0\}$. Next, we take homology with respect to the components of the induced differential that decrease the $(r-1)$th entry of $v$. Each such component is the map $R/(Q_{r,0}) \to R/(Q_{r,0})$ given by $\partial_{r-1}\,Q_{r,0}\,s_{r-1} = Q_{r,1} - Q_{r,0}\,\partial_{r-1} \equiv Q_{r,1} \:\bmod (Q_{r,0})$. This map is again injective because $Q_{r,0},Q_{r,1},\ldots,Q_{r,r-1} \in R$ is a regular sequence, so its homology is $\bigoplus_{v\in\{0,1\}^{r-2} \x \{0\}^2} t^{-\sum_i v_i} \, R(v)/(Q_{r,0},Q_{r,1})$. Continuing in this way using the identity $\partial_{r-j}\cdots\partial_{r-1}\,Q_{r,0}\,s_{r-1}\cdots s_{r-j} \equiv Q_{r,j} \:\bmod (Q_{r,0},\ldots,Q_{r,j-1})$, which is straightforward to show, we conclude that the homology of $C^r$ is $R/(Q_{r,0},\ldots,Q_{r,r-1})$, supported in degree $0$.

	Let $(r_1,\ldots,r_m)$ be any composition of $r$. We claim that the subcomplex $B^{r_1,\ldots,r_m} \subseteq C^r$ is in fact a direct summand of $C^r$. Associated to the subgroup $\fk{S}_{r_1} \x \cdots \x \fk{S}_{r_m} \subseteq \fk{S}_r$ is a standard idempotent in $\cl{H}_r$ whose image in $C^r$ is precisely $B^{r_1,\ldots,r_m}$. To be explicit, for $j \in\{1,\ldots,m\}$ and $i\in\{1,\ldots,r_j\}$, let $X_{j,i}$ be shorthand notation for $X_{r_1 + \cdots + r_{j-1} + i}$. Set $E = E_1\,E_2\,\cdots\,E_m$ where \begin{align*}
		E_j &= (\Delta_{j,1})\,(\Delta_{j,2}\,\Delta_{j,1})\cdots(\Delta_{j,r_j-1}\cdots\Delta_{j,1})\,(X_{j,1})\,(X_{j,2}\,X_{j,1})\cdots(X_{j,r_j-1}\cdots X_{j,1})\\
		&= (\Delta_{j,1}\,X_{j,1})\,(\Delta_{j,2}\,X_{j,2}\,\Delta_{j,1}\,X_{j,1})\cdots(\Delta_{j,r_j-1}\,X_{j,r_j-1}\cdots\Delta_{j,1}\,X_{j,1}).
	\end{align*}We briefly explain a proof of the standard fact that $E$ is idempotent with image $B^{r_1,\ldots,r_m}$. First, the fact that these two expressions agree simply follows from repeated applications of the identity that $\Delta_{j,i}$ commutes with any monomial in which the powers of $X_{j,i}$ and $X_{j,i+1}$ are the same. It is straightforward to check from the first expression for $E_j$ that $\Delta_{j,i}\,E_j = 0$ for $i \in \{1,\ldots,r_j-1\}$, from which it follows that the image of $E$ is contained in $B^{r_1,\ldots,r_m}$. From the second expression for $E_j$, we may replace each of the $r_j(r_j-1)/2$ pairs $\Delta_{j,i}\,X_{j,i}$ as a two-term sum $\Id + \,X_{j,i+1}\,\Delta_{j,i}$ so that the expanded product is a sum over $\smash{2^{r_j(r_j-1)/2}}$ terms. One of these terms is the identity, and every other term is a composite where the rightmost non-identity map is $\Delta_{j,i}$ for some $i$. Hence, when applied to an element in $\ker(\Delta_{j,1})\cap\cdots\cap\ker(\Delta_{j,r_j-1})$, all terms except for the identity are zero. So $E$ restricts to the identity map on $B^{r_1,\ldots,r_m}$. 

	The chain map $E\colon C^r \to B^{r_1,\ldots,r_m}$ establishes that $B^{r_1,\ldots,r_m}$ is a direct summand of $C^r$, so $B^{r_1,\ldots,r_m}$ is also a free resolution. Its degree $0$ homology is the image of $B^{r_1,\ldots,r_m}(0^r) \to R(0^r) \to R(0^r)/(e_i(\mathbf{X}) - e_i(\mathbf{Y}) \:|\: i\in\{1,\ldots,r\})$, which, by definition of $B^{r_1,\ldots,r_m}(0^r)$, is $(\Z[\mathbf{X}]^{\fk{S}_{r_1} \x \cdots \x \fk{S}_{r_m}} \otimes \Sym(\mathbf{Y}))/(e_i(\mathbf{X}) - e_i(\mathbf{Y}) \:|\: i\in\{1,\ldots,r\})$ as required.
\end{proof}

We now turn to link homology. Following \cite{MR4903257,wang2025minimalrickardcomplexesbraids}, the Rickard complexes associated to crossings between strands labeled $k$ are \begin{align*}
	\quad\begin{gathered}
		\labellist
		\pinlabel $k$ at -2 5.5
		\pinlabel $k$ at -2 0.5
		\pinlabel $k$ at 14 0.5
		\pinlabel $k$ at 14 5.5
		\endlabellist
		\includegraphics[width=.06\textwidth]{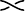}
		\vspace{-3pt}
	\end{gathered} \quad &\coloneqq \begin{tikzcd}[ampersand replacement=\&,column sep=30pt]
		W_0 \& t^{-1}q^1\, W_1 \ar[l,swap,"\zeta_{01}"] \& \cdots \ar[l,swap,"\zeta_{12}"] \& t^{-k}q^k\,W_k \ar[l,swap,"\zeta_{(k-1)k}" xshift=2pt]
	\end{tikzcd}\\
	\quad\begin{gathered}
		\labellist
		\pinlabel $k$ at -2 5.5
		\pinlabel $k$ at -2 0.5
		\pinlabel $k$ at 14 0.5
		\pinlabel $k$ at 14 5.5
		\endlabellist
		\includegraphics[width=.06\textwidth]{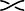}
		\vspace{-3pt}
	\end{gathered} \quad &\coloneqq \begin{tikzcd}[ampersand replacement=\&,column sep=30pt]
		t^kq^{-k} \,W_k \& \cdots \ar[l,swap,"\zeta_{k(k-1)}"] \& t^{1}q^{-1}\,W_1 \ar[l,swap,"\zeta_{21}"] \& W_0 \ar[l,swap,"\zeta_{10}" xshift=2pt]
	\end{tikzcd}
\end{align*}\[
	W_r \coloneqq \quad\begin{gathered}
	\labellist
	\small
	\pinlabel $k$ at -.8 4.8
	\pinlabel $k$ at -.8 0.4
	\pinlabel $r$ at 2.1 2.4
	\pinlabel $k-r$ at 5.7 6
	\pinlabel $k$ at 12.2 4.8
	\pinlabel $k$ at 12.2 0.4
	\pinlabel $r$ at 9.2 2.4
	\pinlabel $k+r$ at 5.7 -1
	\endlabellist
	\includegraphics[width=.12\textwidth]{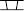}
	\vspace{-3pt}
\end{gathered}\quad \hspace{30pt} \zeta_{r(r+1)} = \hspace{20pt}\begin{gathered}
		\labellist
		\small
		\pinlabel $W_{r+1}$ at -2 11
		\pinlabel $W_r$ at -2 2
		\endlabellist
		\includegraphics[width=.163\textwidth]{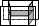}
		\vspace{-3pt}
	\end{gathered} \hspace{30pt} \zeta_{(r+1)r} = \hspace{20pt}\begin{gathered}
		\labellist
		\small
		\pinlabel $W_{r}$ at -1.5 11
		\pinlabel $W_{r+1}$ at -2.5 2
		\endlabellist
		\includegraphics[width=.163\textwidth]{zetaFoam}
		\vspace{-3pt}
	\end{gathered}
\]where braids and webs flow from right to left, and foams are read from top to bottom. The main construction of \cite{wang2025minimalrickardcomplexesbraids} is an explicit minimal complex $\sr{F}^m(\sr{P}_k)$ that is homotopy equivalent to the $m$th power of the positive crossing \[
	\sr{F}^m(\sr{P}_k) \simeq \quad\begin{gathered}
		\labellist
		\pinlabel $k$ at -2 5.5
		\pinlabel $k$ at -2 0.5
		\endlabellist
		\includegraphics[width=.06\textwidth]{1halfTwist}
		\vspace{-3pt}
	\end{gathered}\hspace{3pt}\begin{gathered}
		\cdots
	\end{gathered}\hspace{3pt}\begin{gathered}
		\labellist
		\pinlabel $k$ at 14 0.5
		\pinlabel $k$ at 14 5.5
		\endlabellist
		\includegraphics[width=.06\textwidth]{1halfTwist}
		\vspace{-3pt}
	\end{gathered}\quad
\]As explained in \cite{wang2025minimalrickardcomplexesbraids}, we may view $\sr{F}^m(\sr{P}_k)$ as a complex of singular Soergel bimodules or, as we shall do here, as a complex of $\sl(N)$ webs and foams. Up to a grading shift $q^{-k(N-k)(2m+1)}$ arising from the blackboard framing, the $k$-colored $\sl(N)$ complex of the torus knot $T(2,2m+1)$, denoted by $\mathrm{KRC}_{k,N}(T(2,2m+1))$, is homotopy equivalent to the braid closure of $\sr{F}^{2m+1}(\sr{P}_k)$. As in \cite{MR4903257}, it is preferable to work instead with the braid closure of $\sr{F}^{2m}(\sr{P}_k)$ composed with an additional positive half twist, which we refer to as the \textit{twist closure} of $\sr{F}^{2m}(\sr{P}_k)$. \[
	q^{-k(N-k)(2m+1)}\,\mathrm{KRC}_{k,N}(T(2,2m+1)) \:\simeq \qquad\begin{gathered}
		\labellist
		\pinlabel $\sr{F}^{2m+1}(\sr{P}_k)$ at 10.7 6.8
		\pinlabel $k$ at -1.1 10.3
		\pinlabel $k$ at -1.1 2.7
		\endlabellist
		\includegraphics[width=.2\textwidth]{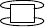}
	\end{gathered} \quad \simeq \qquad\begin{gathered}
		\labellist
		\pinlabel $\sr{F}^{2m}(\sr{P}_k)$ at 11.3 6.8
		\pinlabel $k$ at -1.1 10.3
		\pinlabel $k$ at -1.1 2.7
		\endlabellist
		\includegraphics[width=.218\textwidth]{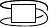}
	\end{gathered}\quad
\]As indicated by the notation, $\sr{F}^m(\sr{P}_k)$ for $m \ge 0$ is an exhaustive filtration of a complex $\sr{P}_k$, and the $k$-colored $\sl(N)$ complex of $T(2,\infty)$ is homotopy equivalent to the braid closure of $q^{2k(N-k)} \,\sr{P}_k$.

We recall some aspects of the construction of $\sr{F}^{m}(\sr{P}_k)$ and refer to \cite[section 4]{wang2025minimalrickardcomplexesbraids} for details. First, there is an auxiliary complex $\sr{K}_k$ modeled on the cube $[0,3]^k$. For each $\varepsilon \in [0,3]^k \cap \Z^k$, we have an object $V(\varepsilon) = q^{G(\varepsilon)}\,V_{r(\varepsilon)}$ where $r(\varepsilon)$ is the number of entries of $\varepsilon$ that are equal to $1$ or $2$ and $V_r$ is the web \vspace{0pt}\[
	V_r = \qquad \begin{gathered}
		\labellist
		\pinlabel $k$ at -0.5 0.2
		\pinlabel $k$ at -0.5 7.7
		\pinlabel $k$ at 22.2 7.7
		\pinlabel $k$ at 22.2 0.2
		\pinlabel $1$ at 1.3 6
		\pinlabel $1$ at 2.85 6
		\pinlabel $\cdots$ at 4.6 5.8
		\pinlabel $1$ at 5.8 6
		\pinlabel $1$ at 13.1 1
		\pinlabel $1$ at 13.1 2.5
		\pinlabel $\vdots$ at 13.1 4.5
		\pinlabel $1$ at 13.1 5.5
		\pinlabel $1$ at 13.1 7
		\pinlabel $r$ at 20.5 6
		\pinlabel $k+r$ at 13.1 {-.6}
		\endlabellist
		\includegraphics[width=.45\textwidth]{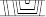}
	\end{gathered}\vspace{3pt}
\]The grading-shift function $G$ is defined in \cite[section 3.3]{wang2025minimalrickardcomplexesbraids}. Then $\sr{K}_k = \bigoplus_\varepsilon t^{-|\varepsilon|}V(\varepsilon)$. The nonzero components of the differential are along edges of $[0,3]^k$, which decrease an entry of $\varepsilon$ by $1$, and explicit formulas are given in \cite[section 3.4]{wang2025minimalrickardcomplexesbraids}. We only recall the formula for components of the following special form. Fix a number $r \in \{1,\ldots,k\}$, an index $i \in \{1,\ldots,r\}$, and $\varepsilon_1,\ldots,\varepsilon_{i-1},\varepsilon_{i+1},\ldots,\varepsilon_r \in\{1,2\}$. Let $\varepsilon^j = (\varepsilon_1,\ldots,\varepsilon_{i-1},j,\varepsilon_{i+1},\ldots,\varepsilon_r,0,\ldots,0) \in [0,3]^k \cap \Z^k$ for $j \in\{1,2\}$. Note that $r = r(\varepsilon^1) = r(\varepsilon^2)$, so $V(\varepsilon^2)$ and $V(\varepsilon^1)$ are $q$-shifts of the same web $V_r$, shown above. The component of the differential from $V(\varepsilon^2)$ to $V(\varepsilon^1)$ is a composite of basic endomorphisms that we now describe. There is an action of $\Z[x_1,\ldots,x_k]$ on $V_r$ where $x_i$ acts by the dot map on the $i$th edge labeled $1$ from left to right (the first $r$ connect the top horizontal line to the bottom, the next $k-r-1$ have both endpoints on the top line, and the last edge is completely horizontal). This action extends to an action of the subalgebra of the nil-Hecke algebra $\cl{H}_k$ generated by $x_1,\ldots,x_k$ and $\partial_1,\ldots,\partial_{r-1},\partial_{r+1},\ldots,\partial_{k-1}$. Letting $\mathbf{Y}$ denote the set of $r$ indeterminates associated to the rightmost edge labeled $r$, we have an endomorphism $Q^*_r \coloneqq e_r(\mathbf{Y} - x_r)$ of $V_r$. The component of the differential from $V(\varepsilon^2)$ to $V(\varepsilon^1)$ is \[
	\theta_{i}\,\cdots\,\theta_{r-1}\,Q^*_r\,\hat{\theta}_{r-1}\,\cdots\,\hat{\theta}_i \qquad\qquad \theta_j \coloneqq \begin{cases}
		\partial_j & \varepsilon_{j+1} = 1\\
		s_j & \varepsilon_{j+1} = 2
	\end{cases} \qquad \hat{\theta}_j \coloneqq \begin{cases}
		s_j & \varepsilon_{j+1} = 1\\
		\partial_j & \varepsilon_{j+1} = 2
	\end{cases}
\]times the sign $(-1)^{(\varepsilon_{i+1}+1)+\cdots+(\varepsilon_r+1)}$. The sign makes every square of $\sr{K}_k$ commute. We make every square anticommute by adding signs in the following way. If $\varepsilon'$ is obtained by decreasing the $i$th entry of $\varepsilon$ by $1$, then multiply the component of the differential from $V(\varepsilon)$ to $V(\varepsilon')$ by $(-1)^{(\varepsilon_{i+1}+1)+\cdots+(\varepsilon_k+1)}$. The sign-corrected component of the differential from $V(\varepsilon^2)$ to $V(\varepsilon^1)$ is the formula displayed above times $(-1)^{k-r}$. 

The complex $\sr{F}^{m}(\sr{P}_k)$ is modeled on the set of partitions $\mu$ with at most $k$ parts and $|\mu|_\infty \leq m$. For each such partition $\mu$, we have an object $W(\mu) = q^{H(\mu)}\,\smash{W_{r(\mu)}^{g(\mu)}}$ where $g(\mu) = (r_1,\ldots,r_m)$ is the tuple of multiplicities of the parts of $\mu$, and $r(\mu) = r_1 + \cdots + r_m$ is the number of parts of $\mu$. The web $W^{r_1,\ldots,r_m}_r$ is \vspace{0pt}\[
	W_r^{r_1,\ldots,r_m} \coloneqq \qquad \begin{gathered}
		\labellist
		\pinlabel $k$ at -0.5 0.3
		\pinlabel $k$ at -0.5 7.6
		\pinlabel $k$ at 14.8 7.6
		\pinlabel $k$ at 14.8 0.3
		\pinlabel $r_1$ at 1.2 6
		\pinlabel $r_2$ at 2.75 6
		\pinlabel $\cdots$ at 4.35 6
		\pinlabel $r_m$ at 5.6 6
		\pinlabel $k-r$ at 9.4 6.9
		\pinlabel $r$ at 13 6
		\pinlabel $k+r$ at 9.4 {0.9}
		\endlabellist
		\includegraphics[width=.295\textwidth]{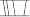}
	\end{gathered}\vspace{0pt}
\]The grading-shift function $H$ is $H(\mu) = \left(\sum_{i=1}^r 2i\mu_i \right) - \frac12(r(\mu)^2 + \sum_{j=1}^m r_j^2)$. The complex is the direct sum $\sr{F}^{2m}(\sr{P}_k) = \smash{\bigoplus_\mu t^{-|\mu|}\,W(\mu)}$. The differential is defined using $\sr{K}_k$. We associate a vertex $\varepsilon(\mu) \in [0,3]^k \cap \Z^k$ of the cube to each partition $\mu$ with at most $k$ parts. Define $\Z_{\ge 0} \to \{0,1,2\}$ by sending $0$ to $0$, odd numbers to $1$, and positive even numbers to $2$. Let $\varepsilon(\mu)$ be obtained from $\mu$ by applying this function to each entry, and note that $r(\mu) = r(\varepsilon(\mu))$. There is a natural inclusion map $W^{r_1,\ldots,r_m}_r \to V_r$ that is modeled on the inclusion of $\Z[x_1,\ldots,x_k]^{\fk{S}_{r_1} \x \cdots \x \fk{S}_{r_m} \x \fk{S}_{k-r}}$ into $\Z[x_1,\ldots,x_k]$.

Let $i$ be an index, let $\mu'$ be obtained from $\mu$ by decreasing its $i$th entry $\mu_i$ by $1$, and assume that $\mu'$ is a partition. So either $i = k$ and $\mu_k > 0$ or $i < k$ and $\mu_i > \mu_{i+1}$. The component of the differential of $\sr{F}^{m}(\sr{P}_k)$ from $W(\mu)$ to $W(\mu')$ is the unique map that makes \vspace{-5pt} \[
	\begin{tikzcd}[row sep = 15pt]
		V(\varepsilon(\mu')) & V(\varepsilon(\mu)) \ar[l]\\
		W(\mu') \ar[u] & W(\mu) \ar[u] \ar[l]
	\end{tikzcd}
\]commute. The vertical arrows are the natural inclusion maps. If $\varepsilon(\mu')$ is obtained from $\varepsilon(\mu)$ by decreasing its $i$th entry by $1$, the top horizontal arrow is just the differential in $\sr{K}_k$. This occurs when $\mu_i$ is even or when $\mu_i$ is $1$. The only other possibility is that $\mu_i$ is odd and greater than $1$, in which case the top horizontal map is from the vertex $\varepsilon(\mu)$ whose $i$th entry is $1$ to the vertex $\varepsilon(\mu')$ whose $i$th entry is $2$. It is defined to be the following composite. We first apply the component of the differential of $\sr{K}_k$ that lowers the $i$th entry of $\varepsilon(\mu)$ from $1$ to $0$. We then identify the web associated to this vertex with the one where $0$ is replaced by $3$. We then apply the component of the differential of $\sr{K}_k$ that lowers the $i$th entry from $3$ to $2$. See \cite[section 4.4]{wang2025minimalrickardcomplexesbraids} for details.

\begin{prop}\label{prop:slNcomplexT2m}
	The $k$-colored $\sl(N)$ complexes of $T(2,\infty)$ and $T(2,2m+1)$ are homotopy equivalent to \begin{align*}
		\mathrm{KRC}_{k,N}(T(2,\infty)) \:&\simeq \: \bigoplus_{\lambda}  t^{-2|\lambda|}q^{4\sum_i i\lambda_i} \,\mathrm{F}_\lambda \otimes_{H^*(\BU(N))} \Z\\[2pt]
		\mathrm{KRC}_{k,N}(T(2,2m+1)) \:&\simeq \: q^{k(N-k)(2m-1)}\,\bigoplus_{|\lambda|_\infty \leq m}  t^{-2|\lambda|}q^{4\sum_i i\lambda_i} \,\mathrm{F}_\lambda \otimes_{H^*(\BU(N))} \Z
	\end{align*}
	where the direct sums are over partitions $\lambda$ with at most $\min(k,N-k)$ parts. 
\end{prop}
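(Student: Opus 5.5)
We start from the twist closure of $\sr{F}^{2m}(\sr{P}_k) = \bigoplus_\mu t^{-|\mu|}W(\mu)$ over partitions $\mu$ with at most $k$ parts and $|\mu|_\infty \le 2m$. The first step is to evaluate the twist closure of each web $W_r^{r_1,\ldots,r_m}$ together with the extra positive half twist. The half twist is itself a Rickard complex, and we simplify it using the explicit minimal complex of \cite{wang2025minimalrickardcomplexesbraids}, exactly as in the trefoil computation of \cite{MR4903257}. After simplification, the closed web becomes, up to grading shift, a theta-type web. Its foam evaluation is the ring of partially symmetric polynomials $\Z[\mathbf{X}]^{\fk{S}_{r_1}\x\cdots\x\fk{S}_{r_m}}\otimes\Sym(\mathbf{Y})\otimes\Sym(\mathbf{Z})\otimes\Sym(\mathbf{W})$ tensored over $H^*(\BU(N))$ with $\Z$. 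The $k+r$ edge splits into $r$ and $k$ pieces, and the remaining $N-k-r$ strands come from the closure.

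A closed $\sl(N)$ web vanishes as soon as it has an edge labeled above $N$. The $k+r$ edge forces $r \le N-k$, so the summands with $r(\mu) > N-k$ disappear. This is why only partitions with at most $\min(k,N-k)$ parts survive.

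The second step is combinatorial. We group the partitions $\mu$ by their even part: write $\mu_i = 2\lambda_i + \epsilon_i$ with $\epsilon_i\in\{0,1\}$, keeping only those $\mu$ whose $\varepsilon(\mu)$ is a vertex of the $(0,1)$ or $(2,1)$ pattern relative to $\lambda$. For fixed $\lambda$, the partitions $\mu$ with $\lfloor\mu/2\rfloor=\lambda$ in this sense are indexed by the vertices $v$ of the subcube appearing in $B^{r_1,\ldots,r_m}$: within each block of equal parts of $\lambda$, the $\mu$'s of the form $(2\lambda_j+1)^{a_j}(2\lambda_j)^{r_j-a_j}$ have multiplicity pattern $(1^{a_j},0^{r_j-a_j})$. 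The differentials that lower an entry from $1$ to $2$ across an odd value $>1$ connect different $\lambda$'s.

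We filter the twist closure by $\lambda$ (ordered by $|\lambda|$) and identify each associated graded piece. Using the formula for the $\sr{K}_k$ differential $\theta_i\cdots Q^*_r\cdots\hat\theta_i$ and the inclusion $W\to V$, we check that after closure the piece for $\lambda$ is isomorphic to $t^{-2|\lambda|}q^{4\sum_i i\lambda_i}\,\mathrm{F}_\lambda\otimes_{H^*(\BU(N))}\Z$, with the vertex $v$ and differential matching Definition~\ref{def:complexC} restricted as in Definition~\ref{def:subcomplexB}. Here $Q^*_r$ closes up to $Q_r$, and the signs $(-1)^{k-r}$ are absorbed by a global sign change. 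The gradings must then be matched: $H(\mu)$ together with the half-twist and framing shift $q^{-k(N-k)(2m+1)}$ should produce $q^{k(N-k)(2m-1)}q^{4\sum i\lambda_i}q^{2\sum iv_i}$. The constraint $|\mu|_\infty\le 2m$ becomes $|\lambda|_\infty \le m$, with top odd values excluded. We expect a boundary effect to arise from the partitions with $|\mu|_\infty = 2m$, and this is resolved by the half twist.

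The main obstacle is showing that the differential components between different $\lambda$-pieces, namely those through odd values $>1$ that pass via $0\to 3\to 2$, can be removed. We would argue that in the closure these components are null-homotopic, or that they vanish in $\Z$-coefficients after tensoring with $\Z$ over $H^*(\BU(N))$. The first attempt is to use a degree argument. Each $\mathrm{F}_\lambda\otimes\Z$ is a minimal complex whose homology $\mathbf{H}^{**}(B_\lambda)$ is supported in the appropriate bidegrees, and the connecting maps shift $t$ by one. Failing that, we would directly compute that these components factor through multiplication by $e_i(\mathbf{X})-e_i(\mathbf{Y})$, which dies after base change, and then apply Gaussian elimination or a filtration splitting. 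The $T(2,\infty)$ statement follows by passing to the colimit over $m$, since the shift $q^{k(N-k)(2m-1)}$ is normalized away by $q^{2k(N-k)}\sr{P}_k$.
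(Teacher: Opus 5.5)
Your proposal has a genuine gap, and it starts with an off-by-one error in how you group the $\mu$'s.

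\textbf{The grouping.} You take $\lambda=\lfloor\mu/2\rfloor$, i.e.\ $\mu_i\in\{2\lambda_i,2\lambda_i+1\}$. The blocks that actually assemble into $\mathrm{F}_\lambda\otimes_{H^*(\BU(N))}\Z$ are $\mu_i\in\{2\lambda_i-1,2\lambda_i\}$ for $\lambda_i>0$ and $\mu_i=0$ for $\lambda_i=0$, so $\lambda_i=\lceil\mu_i/2\rceil$. Here $v_i=1$ corresponds to the even value $2\lambda_i$, i.e.\ to $\varepsilon_i=2$. Only with this grouping do the following hold:
\begin{itemize}
\item $r(\mu)$ is constant on a block; your block for $\lambda=0$ mixes $\mu_i=0$ and $\mu_i=1$.
\item The within-block maps are the $\varepsilon_i\colon 2\to 1$ components $\theta_i\cdots Q^*_r\cdots\hat\theta_i$ of $\sr{K}_k$. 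These close up to the differential of $C^r$, since $\theta_j=s_j$ exactly when $v_{j+1}=1$, i.e.\ when $\varepsilon_{j+1}=2$.
\item $|\mu|_\infty\le 2m$ corresponds exactly to $|\lambda|_\infty\le m$. The ``boundary effect'' you anticipate is an artifact of the shift.
\end{itemize}
Your own sentence that the odd-value maps connect different $\lambda$'s contradicts $\lfloor\mu/2\rfloor$: the map $2\lambda_i+1\to 2\lambda_i$ stays inside one of your blocks. Under your grouping, the maps between blocks are the even decreases, and these are exactly the nonzero differentials you must keep. For $k=1$, $N=2$, after closing up and retracting, the component from $\mu=(2)$ to $\mu=(1)$ is multiplication by $\pm 2x$ on $\Z[x]/(x^2)$. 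It produces the $\Z_2$ in $\mathbf{H}^{**}(\SO(3))$, so it cannot be homotoped away.

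\textbf{The splitting mechanism.} A filtration by $|\lambda|$ only yields an iterated extension, and neither of your proposed fixes splits it.
\begin{itemize}
\item A homology-support argument says nothing about chain-level connecting maps. In any case $\mathbf{H}^{**}(B_\lambda)$ is spread over several $t$-degrees.
\item $e_i(\mathbf{X})-e_i(\mathbf{Y})$ is not symmetric in $\mathbf{X}\sqcup\mathbf{Y}\sqcup\mathbf{Z}\sqcup\mathbf{W}$, so it does not die under $\otimes_{H^*(\BU(N))}\Z$. For $k=1$, $N=2$ it is the $2x$ above.
\end{itemize}
What is needed is a $t$-degree argument after retracting each piece. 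Each $\ol{W(\mu)T}$ deformation retracts, as in \cite[Lemma 3.4]{MR4903257}, onto a single web $U(\mu)$ concentrated in $t$-degree $-r(\mu)$. Homological perturbation then retracts the whole twist closure onto $\bigoplus_\mu t^{-|\mu|}U(\mu)$, with a differential that is a priori only lower-triangular for the entrywise order.

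Since $t^{-|\mu|}U(\mu)$ sits in degree $-|\mu|-r(\mu)$, a component $U(\mu)\to U(\mu')$ can be nonzero only when $\mu'$ lowers a single entry by $1$ and $r(\mu')=r(\mu)$. Suppose that entry is odd, hence $>1$. Then the $\sr{F}$-differential is the composite $1\to 0\cong 3\to 2$ and factors through a shift of $V_{r-1}$. The twist closure of $V_{r-1}$ is concentrated in $t$-degree $-r+1$, while $U(\mu)$ and $U(\mu')$ both sit in degree $-r$, so the induced component vanishes.

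This gives an honest direct sum over $\lambda$ with $\lambda_i=\lceil\mu_i/2\rceil$. After that, your comparison with $C^r$ via the inclusions $W(\mu)\to V(\varepsilon(\mu))$ can go through.
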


We summarize the proof of Proposition~\ref{prop:slNcomplexT2m}. The $k$-colored $\sl(N)$ complex of $T(2,2m+1)$ is homotopy equivalent to the twist closure of $\sr{F}^{2m}(\sr{P}_k)$, which we denote by $\ol{\sr{F}^{2m}(\sr{P}_k)T} = \smash{\bigoplus_{|\mu|_\infty \leq 2m} t^{-|\mu|}\,\ol{W(\mu)T}}$. The direct sum is over the set of partitions $\mu$ with at most $k$ parts and $|\mu|_\infty \leq 2m$. When $k = m = 2$, we have \vspace{-5pt}\[
	\begin{tikzcd}[column sep=25pt,row sep=15pt]
		& & & & t^{-8}\,\ol{W(44)T} \ar[d]\\
		& & & t^{-6}\,\ol{W(33)T} \ar[d] & t^{-7}\,\ol{W(43)T} \ar[l] \ar[d]\\
		& & t^{-4}\,\ol{W(22)T} \ar[d] & t^{-5}\,\ol{W(32)T} \ar[l] \ar[d] & t^{-6}\,\ol{W(42)T} \ar[l] \ar[d]\\
		& t^{-2}\,\ol{W(11)T} \ar[d] & t^{-3}\,\ol{W(21)T} \ar[l] \ar[d] & t^{-4}\,\ol{W(31)T} \ar[l] \ar[d] & t^{-5}\,\ol{W(41)T} \ar[l] \ar[d]\\
		\ol{W(00)T} & t^{-1}\,\ol{W(10)T} \ar[l] & t^{-2}\,\ol{W(20)T} \ar[l] & t^{-3}\,\ol{W(30)T} \ar[l] & t^{-4}\,\ol{W(40)T} \ar[l]
	\end{tikzcd}
\]
We show that the complex $\ol{W(\mu)T}$ deformation retracts onto a complex $U(\mu)$ supported in a single grading. These deformation retracts over all $\mu$ provide a deformation retract of the entire complex $\ol{\sr{F}^{2m}(\sr{P}_k)T}$ onto a complex with objects $\bigoplus_{|\mu|_\infty\leq 2m}t^{-|\mu|}U(\mu)$. A priori, there can be nonzero components of the differential $U(\mu) \to U(\mu')$ whenever $\mu' < \mu$ entrywise. We show that the component is nonzero only if $\mu'$ is obtained by decreasing an \textit{even} entry of $\mu$ by $1$. \[
	\begin{tikzpicture}[every node/.style={inner sep=2pt}, x=80pt, y=30pt]
		\node (a00) at (0,0) {$U(00)$};
		\node (a10) at (1,0) {$t^{-1}\,U(10)$};
		\node (a20) at (2,0) {$t^{-2}\,U(20)$};
		\node (a30) at (3,0) {$t^{-3}\,U(30)$};
		\node (a40) at (4,0) {$t^{-4}\,U(40)$};
		\node (a11) at (1,1) {$t^{-2}\,U(11)$};
		\node (a21) at (2,1) {$t^{-3}\,U(21)$};
		\node (a31) at (3,1) {$t^{-4}\,U(31)$};
		\node (a41) at (4,1) {$t^{-5}\,U(41)$};
		\node (a22) at (2,2) {$t^{-4}\,U(22)$};
		\node (a32) at (3,2) {$t^{-5}\,U(32)$};
		\node (a42) at (4,2) {$t^{-6}\,U(42)$};
		\node (a33) at (3,3) {$t^{-6}\,U(33)$};
		\node (a43) at (4,3) {$t^{-7}\,U(43)$};
		\node (a44) at (4,4) {$t^{-8}\,U(44)$};

		\draw[{<[scale=1.5]}-] (a33)--(a43);
		\draw[{<[scale=1.5]}-] (a32)--(a42);
		\draw[{<[scale=1.5]}-] (a11)--(a21);
		\draw[{<[scale=1.5]}-] (a31)--(a41);
		\draw[{<[scale=1.5]}-] (a10)--(a20);
		\draw[{<[scale=1.5]}-] (a30)--(a40);

		\draw[{<[scale=1.5]}-] (a43)--(a44);
		\draw[{<[scale=1.5]}-] (a21)--(a22);
		\draw[{<[scale=1.5]}-] (a31)--(a32);
		\draw[{<[scale=1.5]}-] (a41)--(a42);

		\path[
		      draw,
		      rounded corners, 
		      line width=0.6pt
		    ] 
		      ($ (a33.north west) + (-2pt, 0pt) $) --
		      ($ (a33.south west) + (-2pt,-2pt) $) --
		      ($ (a43.south east) + ( 2pt,-2pt) $) --
		      ($ (a44.north east) + ( 2pt, 2pt) $) -- 
		      ($ (a44.north west) + ( 0pt, 2pt) $) -- cycle;
		\path[
		      draw,
		      rounded corners, 
		      line width=0.6pt
		    ] 
		      ($ (a11.north west) + (-2pt, 0pt) $) --
		      ($ (a11.south west) + (-2pt,-2pt) $) --
		      ($ (a21.south east) + ( 2pt,-2pt) $) --
		      ($ (a22.north east) + ( 2pt, 2pt) $) -- 
		      ($ (a22.north west) + ( 0pt, 2pt) $) -- cycle;
		\path[
		      draw,
		      rounded corners, 
		      line width=0.6pt
		    ] 
		      ($ (a31.south west) + (-2pt,-2pt) $) --
		      ($ (a41.south east) + ( 2pt,-2pt) $) --
		      ($ (a42.north east) + ( 2pt, 2pt) $) -- 
		      ($ (a32.north west) + (-2pt, 2pt) $) -- cycle;
		\path[
		      draw,
		      rounded corners, 
		      line width=0.6pt
		    ] 
		      ($ (a30.south west) + (-2pt,-2pt) $) --
		      ($ (a40.south east) + ( 2pt,-2pt) $) --
		      ($ (a40.north east) + ( 2pt, 2pt) $) -- 
		      ($ (a30.north west) + (-2pt, 2pt) $) -- cycle;
		\path[
		      draw,
		      rounded corners, 
		      line width=0.6pt
		    ] 
		      ($ (a10.south west) + (-2pt,-2pt) $) --
		      ($ (a20.south east) + ( 2pt,-2pt) $) --
		      ($ (a20.north east) + ( 2pt, 2pt) $) -- 
		      ($ (a10.north west) + (-2pt, 2pt) $) -- cycle;
		\path[
		      draw,
		      rounded corners, 
		      line width=0.6pt
		    ] 
		      ($ (a00.south west) + (-2pt,-2pt) $) --
		      ($ (a00.south east) + ( 2pt,-2pt) $) --
		      ($ (a00.north east) + ( 2pt, 2pt) $) -- 
		      ($ (a00.north west) + (-2pt, 2pt) $) -- cycle;
	\end{tikzpicture}\vspace{0pt}
\]So the resulting complex is a direct sum of subcomplexes indexed by the set of partitions $\lambda$ with at most $k$ parts and $|\lambda|_\infty \leq m$. The subcomplex corresponding to $\lambda = (\lambda_1,\ldots,\lambda_r,0,\ldots,0)$ with $\lambda_r > 0$ consists of the objects $U(\mu)$ for partitions $\mu = (\mu_1,\ldots,\mu_r,0,\ldots,0)$ satisfying $2\lambda_i - 1 \leq \mu_i \leq 2\lambda_i$ for $i \in\{1,\ldots,r\}$. We then show that this subcomplex corresponding to $\lambda$ is $\mathrm{F}_\lambda \otimes_{H^*(\BU(N))} \Z$ up to a grading shift that we compute. 

\begin{proof}[Proof of Proposition~\ref{prop:slNcomplexT2m}]
	For notational simplicity, let us first assume that $k \leq N-k$. We consider the twist closure $\ol{\sr{F}^{2m}(\sr{P}_k)T} = \smash{\bigoplus_{|\mu|_\infty \leq 2m} t^{-|\mu|}\,\ol{W(\mu)T}}$. First, the twist closure $\ol{W(\mu)T}$ of $W(\mu)$ is a complex that deformation retracts onto a grading-shifted web $U(\mu)$ supported in a single $t$-grading, given by \[
		U(\mu)\coloneqq t^{-r(\mu)}q^{H(\mu) + r(\mu)(N+1)-k(N-k)} \quad \begin{gathered}
			\labellist
			\pinlabel $r_1$ at 1.4 7
			\pinlabel $r_2$ at 2.9 7
			\pinlabel $\cdots$ at 4.6 7
			\pinlabel $r_m$ at 5.8 7
			\pinlabel $r$ at 12.8 7
			\pinlabel $k-r$ at 9.2 8.3
			\pinlabel $k$ at 10.3 5.5
			\pinlabel $k$ at 0.9 0.9
			\pinlabel $r$ at 8.1 1.8
			\pinlabel $k+r$ at 10.1 0.9
			\endlabellist
			\includegraphics[width=.35\textwidth]{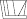}
		\end{gathered}
	\]
	The proof is just the proof of \cite[Lemma 3.4]{MR4903257} after applying standard associator foams. By the homological perturbation lemma (see \cite[section 3.1]{MR4903257}), these individual deformation retracts $\ol{W(\mu)T} \to U(\mu)$ determine a deformation retract of $\ol{\sr{F}^{2m}(\sr{P}_k)T}$ onto a complex with objects $\smash{\bigoplus_{|\mu|_\infty \leq 2m}} t^{-|\mu|}\,U(\mu)$. Because the differential of the original complex $\sr{F}^{2m}(\sr{P}_k)$ respects the partial ordering on the indexing set of partitions given by entrywise comparison, the retract complex does as well. If $\mu$ and $\mu'$ are partitions such that $\mu' < \mu$ entrywise and for which $t^{-|\mu|}U(\mu)$ and $t^{-|\mu'|}U(\mu')$ differ in $t$-degree by $1$, we see that $\mu'$ must be obtainable from $\mu$ by decreasing a single entry of $\mu$ by $1$ and $r(\mu) = r(\mu')$. The component map $U(\mu) \to U(\mu')$ in this case is the unique map for which \vspace{-5pt}\[
		\begin{tikzcd}[row sep=10pt]
			\ol{W(\mu')T} \ar[d] & \ol{W(\mu)T} \ar[l] \ar[d]\\
			U(\mu') & U(\mu) \ar[l]
		\end{tikzcd}\vspace{-5pt}
	\]commutes, where the vertical maps are the deformation retracts and the top map is induced by the component of the differential from $W(\mu)$ to $W(\mu')$ in $\sr{F}^{2m}(\sr{P}_k)$. All other component maps are zero because the differential has $t$-degree $1$.

	Suppose $\mu'$ is obtained by decreasing the $i$th entry $\mu_i$ of $\mu$ by $1$ and $r(\mu') = r(\mu)$. We claim that if $\mu_i$ is odd, then the component of the differential from $U(\mu)$ to $U(\mu')$ is zero. The assumption that $r(\mu) = r(\mu')$ implies that $\mu_i$ is an odd number greater than $1$. Observe that the component of the differential of $\sr{F}^{2m}(\sr{P}_k)$ from $W(\mu)$ to $W(\mu')$ is a composite that factors through a grading shift of the web $V_{r(\mu)-1}$. It follows that the component map $\ol{W(\mu)T} \to \ol{W(\mu')T}$ factors through $\ol{V_{r(\mu)-1}T}$, which deformation retracts onto a shifted web supported in $t$-degree $t^{-r(\mu)+1}$. As $U(\mu)$ and $U(\mu')$ are supported in $t$-degree $t^{-r(\mu)}$, it follows that the component map $U(\mu) \to U(\mu')$ is zero.

	So we have homotopy equivalences \[
		\KRC_{k,N}(T(2,2m+1)) \simeq q^{k(N-k)(2m+1)}\, \ol{\sr{F}^{2m}(\sr{P}_k)T} \simeq q^{k(N-k)(2m-1)}\, \bigoplus_{|\mu|_\infty\leq 2m} t^{-|\mu|}q^{2k(N-k)}U(\mu)
	\]where the nonzero components of the differential of the right-hand complex correspond to decreasing even entries of $\mu$ by $1$. For each partition $\lambda = (\lambda_1,\ldots,\lambda_r,0,\ldots,0)$ with $m \ge \lambda_1$ and $\lambda_r > 0$, let $M(\lambda)$ be the set of partitions $\mu = (\mu_1,\ldots,\mu_r,0,\ldots,0)$ for which $2\lambda_i-1 \leq \mu_i \leq 2\lambda_i$ for $i\in\{1,\ldots,r\}$. Let $\mu(\lambda) \in M(\lambda)$ be the partition $\mu(\lambda) \coloneqq (2\lambda_1-1,\ldots,2\lambda_r-1,0,\ldots,0)$. Then the right-hand complex above splits as a direct sum of complexes \[
		\KRC_{k,N}(T(2,2m+1)) \simeq q^{k(N-k)(2m-1)}\, \bigoplus_{|\lambda|_\infty\leq m} \mathrm{C}_\lambda \qquad\qquad \mathrm{C}_\lambda \coloneqq \bigoplus_{\mu\in M(\lambda)} t^{-|\mu|}q^{2k(N-k)}U(\mu).
	\]
	We identify $\mathrm{C}_\lambda$ with $\mathrm{D}_\lambda \coloneqq t^{-2|\lambda|}q^{4\sum_i i\lambda_i}\,\mathrm{F}_\lambda \otimes_{H^*(\BU(N))} \Z$, which appears in the statement of the proposition. Recall that $\mathrm{F}_\lambda = B^{r_1,\ldots,r_m} \otimes \Sym(\mathbf{Z}) \otimes \Sym(\mathbf{W})$ where $B^{r_1,\ldots,r_m}$ is the direct sum $\bigoplus_{v} B^{r_1,\ldots,r_m}(v)$ over sequences $v\in\{0,1\}^r$ of the form $(1^{a_1},0^{r_1-a_1},1^{a_2},0^{r_2-a_2},\ldots,1^{a_m},0^{r_m-a_m})$ for integers $a_i$ satisfying $0 \leq a_i \leq r_i$. Note that $v$ is of this form if and only if $\mu(\lambda) + v \coloneqq \mu(\lambda) + (v_1,\ldots,v_r,0^{k-r})$ is a partition. In particular, this defines a bijection between the indexing set of vertices $v$ defining $\mathrm{D}_\lambda$ and the index set $M(\lambda)$ of partitions $\mu$ defining $\mathrm{C}_\lambda$. For each such vertex $v$, we identify the summand of $\mathrm{D}_\lambda$ corresponding to $v$ with the summand of $\mathrm{C}_\lambda$ corresponding to $\mu(\lambda) + v$. 

	The summand of $\mathrm{D}_\lambda$ indexed by $v$ is \[
		t^{-2|\lambda| - |v|}q^{4\sum_i i\lambda_i + 2\sum_i iv_i} \,\left(\Z[\mathbf{X}]^{\fk{S}_{v}} \otimes \Sym(\mathbf{Y}) \otimes \Sym(\mathbf{Z}) \otimes \Sym(\mathbf{W})\right) \otimes_{H^*(\BU(N))} \Z
	\]where $\fk{S}_v \coloneqq \fk{S}_{a_1} \x \fk{S}_{r_1-a_1} \x \cdots \x \fk{S}_{a_m} \x \fk{S}_{r_m-a_m}$. As before, the sizes of $\mathbf{X},\mathbf{Y},\mathbf{Z},\mathbf{W}$ are $r,r,k-r,N-k-r$, respectively, and $H^*(\BU(N)) = \Sym(\mathbf{X} \sqcup\mathbf{Y} \sqcup\mathbf{Z}\sqcup\mathbf{W})$. The corresponding summand of $\mathrm{C}_\lambda$ is $t^{-|\mu(\lambda) + v|}q^{2k(N-k)}U(\mu(\lambda) + v)$ where \[
		U(\mu(\lambda) + v) = t^{-r(\mu(\lambda)+v)}q^{H(\mu(\lambda) + v) + r(\mu(\lambda)+v)(N+1)-k(N-k)} \qquad \begin{gathered}
			\labellist
			\pinlabel $\mathbf{X}_1^*$ at 1.2 7
			\pinlabel $\mathbf{X}_2^*$ at 2.7 7
			\pinlabel $\cdots$ at 4.3 7
			\pinlabel $\mathbf{X}_{2m}^*$ at 5.6 7
			\pinlabel $\mathbf{Y}$ at 12.9 7
			\pinlabel $\mathbf{Z}$ at 9.2 8.3
			\pinlabel ${\mathbf{Y}\!\sqcup\!\mathbf{Z}}$ at 9.8 5.5
			\pinlabel ${\mathbf{X}\!\sqcup\!\mathbf{Z}}$ at 0.3 0.9
			\pinlabel $\mathbf{X}$ at 8.1 1.8
			\pinlabel ${\mathbf{X}\!\sqcup\!\mathbf{Y}\!\sqcup\!\mathbf{Z}}$ at 10.1 0.9
			\endlabellist
			\includegraphics[width=.35\textwidth]{twistClosedWeb}
		\end{gathered}
	\]where each edge is given a set of indeterminates of size equal to the label of the edge. The sizes of $\mathbf{X}_1^*,\ldots,\mathbf{X}_{2m}^*$ are $a_1,r_1-a_1,\ldots,a_m,r_m-a_m$, whose nonzero terms are the multiplicities of the parts of $\mu(\lambda) + v$, and $\mathbf{X} = \mathbf{X}_1^*\sqcup\cdots\sqcup\mathbf{X}^*_{2m}$. We identify $\Z[\mathbf{X}]^{\fk{S}_v} = \Sym(\mathbf{X}_1^*) \otimes\cdots\otimes\Sym(\mathbf{X}_{2m}^*)$. By the same argument used in the proof of \cite[Proposition 5.2]{MR4903257}, the $\sl(N)$ state space of the unshifted web underlying $U(\mu(\lambda) + v)$ is \[
		q^{(a_1^2+(r_1-a_1)^2 + \cdots+a_m^2 +(r_m-a_m)^2+r^2+(k-r)^2+(N-k-r)^2 - N^2)/2}\left(\Z[\mathbf{X}]^{\fk{S}_v} \otimes \Sym(\mathbf{Y}) \otimes \Sym(\mathbf{Z}) \otimes \Sym(\mathbf{W}) \right) \otimes_{H^*(\BU(N))} \Z.
	\]Note that this is the Borel presentation of the cohomology ring of $\mathrm{Fl}(a_1,r_1-a_1,\ldots,a_m,r_m-a_m,r,k-r,N-k-r;\C^N)$ shifted downwards in grading by its complex dimension. Verifying that these summands of $\mathrm{D}_\lambda$ and $\mathrm{C}_\lambda$ match reduces to checking that the grading shifts are the same. The $t$-grading shifts agree using the observation that $r(\mu(\lambda) + v) = r$. For the $q$-grading shifts, we must show that $4\sum_i i\lambda_i + 2\sum_i iv_i$ is equal to \[
		2k(N-k) + H(\mu(\lambda) + v) + r(N+1)-k(N-k)+\frac12\left(\sum_{i=1}^m (a_i^2 + (r_i-a_i)^2)\right) + \frac12(r^2 + (k-r)^2 + (N-k-r)^2 -N^2).
	\]Direct simplification using the formula $H(\mu) = \left(\sum_{i=1}^r 2i\mu_i \right) - \frac12(r^2 + \sum_{j=1}^m r_j^2)$ verifies this equality. 

	Lastly, we show that the differentials of $\mathrm{C}_\lambda$ and $\mathrm{D}_\lambda$ agree. By construction, the complex $\mathrm{C}_\lambda$ is a deformation retract of the subquotient complex $\bigoplus_{\mu\in M(\lambda)} t^{-|\mu|}\,\ol{W(\mu)T}$ of $\ol{\sr{F}^{2m}(\sr{P}_k)T}$ up to a grading shift. Consider the subquotient complex $\bigoplus_{\mu\in M(\lambda)} t^{-|\mu|}\,W(\mu)$ of $\sr{F}^{2m}(\sr{P}_k)$. By definition of the differential of $\sr{F}^{2m}(\sr{P}_k)$, there is a chain map \[
		t^{2|\lambda|}\hspace{-2pt}\bigoplus_{\mu\in M(\lambda)} t^{-|\mu|}\,W(\mu) \to \,t^{|\varepsilon(\mu(\lambda))|}\hspace{-10pt}\bigoplus_{\varepsilon \in \{1,2\}^r \x \{0\}^{k-r}} t^{-|\varepsilon|}\,V(\varepsilon)
	\]given by standard inclusion foams $W(\mu) \to V(\varepsilon(\mu))$. The $t$-degree shifts are chosen so that this map has $t$-degree zero. The complex on the right is a subquotient complex of the auxiliary complex $\sr{K}_k$, and its differential is given explicitly before the statement of the proposition. Note that for each $\varepsilon \in \{1,2\}^r \x \{0\}^{k-r}$, the web $V(\varepsilon)$ is just a grading-shifted copy of $V_r$. Just as for $W(\mu)$, the twist closure of $V_r$ deformation retracts onto a web whose state space is a grading-shifted copy of $(\Z[\mathbf{X}] \otimes \Sym(\mathbf{Y}) \otimes \Z[\mathbf{Z}] \otimes \Sym(\mathbf{W})) \otimes_{H^*(\BU(N))} \Z$. The entire subquotient complex $\bigoplus_{\varepsilon\in\{1,2\}^r\x\{0\}^{k-r}}t^{-|\varepsilon|}\,V(\varepsilon)$ deformation retracts onto a grading-shifted copy of $(C^r \otimes \Z[\mathbf{Z}] \otimes \Sym(\mathbf{W})) \otimes_{H^*(\BU(N))} \Z$, where $C^r$ is the complex defined in Definition~\ref{def:complexC}. This follows directly from the explicit formula for the differential of the subquotient complex and the dot sliding relation (see for example \cite[Proposition 2.18]{MR4903257}). It follows that the complex $\mathrm{C}_\lambda$ maps injectively into $(C^r \otimes \Z[\mathbf{Z}] \otimes \Sym(\mathbf{W})) \otimes_{H^*(\BU(N))} \Z$ with image $(B^{r_1,\ldots,r_m} \otimes \Sym(\mathbf{Z}) \otimes \Sym(\mathbf{W})) \otimes_{H^*(\BU(N))} \Z$, which, up to a grading shift, is precisely $\mathrm{D}_\lambda$. This completes the proof in the case of $T(2,2m+1)$ when $k \leq N-k$. If $k > N-k$, we note that any web with an edge with a label larger than $N$ is zero in the $\sl(N)$ webs and foams category, so $W(\mu) = 0$ whenever $\mu$ has more than $N-k$ parts. The case of $T(2,\infty)$ follows in the same way, except without the restrictions $|\mu|_\infty \leq 2m$ and $|\lambda|_\infty \leq m$, and a straightforward global grading-shift calculation. 
\end{proof}

\begin{proof}[Proof of Theorem~\ref{thm:mainTheorem}]
	This follows directly from Corollary~\ref{cor:freeloopspaceshifts}, Proposition~\ref{prop:Flambdafreeresolution}, and Proposition~\ref{prop:slNcomplexT2m}.
\end{proof}

\newpage
\appendix
\section{The nil-Hecke algebra action}\label{sec:nil_hecke_action}

The purpose of this appendix is to construct an action of the nil-Hecke algebra $\cl{H}_r$ on the chain complex $C^r$ by chain maps. The arguments in this appendix are routine but lengthy. We recall that $\cl{H}_r$ is generated as an algebra by $x_1,\ldots,x_r,\partial_1,\ldots,\partial_{r-1}$ with relations \[
	\begin{gathered}
		x_ix_j = x_jx_i \qquad \partial_i\partial_i = 0 \qquad \partial_i \partial_{j}\partial_i = \partial_{j}\partial_i\partial_{j}\text{ for }|i - j| = 1 \qquad \partial_i\partial_j = \partial_j \partial_i \text{ for } |i - j| > 1\\
		\partial_ix_i = \Id + \,x_{i+1}\partial_i \qquad \partial_ix_{i+1} = -\Id + \,x_i\partial_i \qquad \partial_ix_j = x_j \partial_i \text{ for }j \notin \{i,i+1\}.
	\end{gathered}
\]The simple transpositions $s_1,\ldots,s_{r-1} \in \cl{H}_r$ generate a copy of $\Z[\fk{S}_r]$ within $\cl{H}_r$, and we have \[
	\begin{gathered}
		s_is_i = \Id \qquad s_is_js_i = s_js_is_j \text{ for }|i-j| = 1 \qquad s_is_j = s_js_i \text{ for }|i-j| > 1\\
		s_ix_{i+1} = x_is_i \qquad s_ix_i = x_{i+1}s_i \qquad s_ix_j = x_js_i\text{ for }j \notin \{i,i+1\}\\
		s_i\partial_i=\partial_i \qquad \partial_is_i = -\partial_i \qquad s_is_j\partial_i = \partial_j s_i s_j \text{ and } s_i\partial_js_i = s_j\partial_is_j \text{ for } |i-j|=1 \qquad s_i\partial_j = \partial_j s_i \text{ for }|i-j|>1.
	\end{gathered}
\]The image and kernel of $\partial_i$ agree and coincide with the set of polynomials invariant under $s_i$. We refer to the relations above with either $j\notin \{i,i+1\}$ or $|i-j| > 1$ as \emph{far-commutativity relations} and those with $|i-j| = 1$ as \emph{braid relations}. 
The complex $C^r$ is defined in Definition~\ref{def:complexC}. The endomorphisms $X_1,\ldots,X_r,\Delta_1,\ldots,\Delta_{r-1}$ of $C^r$ are defined in Definition~\ref{def:nilHeckeActionOnC}. Lemmas~\ref{lem:DeltaChainMap} and \ref{lem:XChainMap} verify that these endomorphisms are chain maps. The nil-Hecke relations are proved in Lemmas~\ref{lem:XsCommute}, \ref{lem:nilCoxeterRelations}, \ref{lem:twistedLeibnizRule}, and \ref{lem:farCommutativity}. 

\begin{lem}\label{lem:DeltaChainMap}
	The endomorphisms $\Delta_1,\ldots,\Delta_{r-1}$ are chain maps.
\end{lem}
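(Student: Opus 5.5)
We must show $d\,\Delta_i = \Delta_i\,d$ on $C^r$. Both sides are sums of components between vertices. The differential lowers a single coordinate $v_l$ from $1$ to $0$. The map $\Delta_i$ either preserves the vertex (when $v_i=v_{i+1}$, acting by $\partial_i$) or swaps $(v_i,v_{i+1})=(0,1)$ to $(1,0)$ (acting by $s_i$). We compare components $R(v)\to R(w)$ vertex by vertex, splitting into cases according to the index $l$ of the coordinate lowered by $d$ and the values of $(v_i,v_{i+1})$.

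\textbf{Far case.} First suppose $l\notin\{i,i+1\}$. The $d$-component is $\theta_l\cdots\theta_{r-1}Q_r\hat\theta_{r-1}\cdots\hat\theta_l$. The choices of $\theta_j,\hat\theta_j$ depend only on $v_{j+1}$.
\begin{itemize}
\item If $l>i+1$, the $d$-component involves only $\partial_j,s_j$ with $j\ge l$ and the polynomial $Q_r$, so it commutes with $\partial_i$ and $s_i$. The $\theta$-choices are unchanged by the swap at positions $i,i+1<l$.
\item If $l<i$, the word contains the letters at positions $i-1,i,i+1$, whose types depend on $v_i,v_{i+1},v_{i+2}$. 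If $v_i=v_{i+1}=0$, the word contains $\partial_{i-1}\partial_i$ on the left and $s_{i-1}s_i$ on the right (or the analogous patterns when the values are $1$). Then $\partial_i$ must be moved through this word using the braid-type relations $s_is_j\partial_i=\partial_js_is_j$ and $\partial_i\partial_{i+1}\partial_i=\partial_{i+1}\partial_i\partial_{i+1}$. It also must commute with $Q_r$; this is fine unless $i+1=r$, in which case we use that $Q_r=e_r(\mathbf Y-x_r)$ interacts with $\partial_{r-1}$.
\end{itemize}
When $(v_i,v_{i+1})=(0,1)$, the swapped vertex changes $\theta_{i-1},\theta_i$. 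The identity needed is of the form $s_i\cdot(\text{word for }v)=(\text{word for }v')\cdot s_i$, which follows from $s_i$ conjugating letters at adjacent positions and from $s_is_{i\pm1}\partial_i=\partial_{i\pm1}s_is_{i\pm1}$.

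\textbf{Near case.} Next suppose $l\in\{i,i+1\}$. Here $d$ changes exactly one of $v_i,v_{i+1}$, so a nonzero component on one side may correspond to a composite through a different intermediate vertex on the other side. For instance, from $v$ with $(v_i,v_{i+1})=(1,1)$ to $w$ with $(0,1)$, the two routes are $\Delta_i$-then-$d$ and $d$-then-$\Delta_i$:
\[
d\circ\partial_i \qquad\text{and}\qquad s_i\circ d \;+\; \partial_i\circ d' .
\]
These must be matched using $s_i\partial_i=\partial_i$, $\partial_is_i=-\partial_i$, $\partial_i^2=0$, and the fact that the $d$-words starting at $l=i$ versus $l=i+1$ differ by a prefix $\theta_i$ and suffix $\hat\theta_i$. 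I expect this to reduce to the same ``center square'' identity used for $d^2=0$ in \cite[Proposition 3.11]{wang2025minimalrickardcomplexesbraids}. It should essentially be a relation $\partial_i A s_i - s_i A\partial_i=\dots$ for the common tail $A=\theta_{i+1}\cdots Q_r\cdots\hat\theta_{i+1}$, which is symmetric in $x_i,x_{i+1}$ except through $\theta_{i+1}$.

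\textbf{Main obstacle.} The hard step is this near case with $l=i$ or $i+1$, together with the boundary case $i+1=r$ where $Q_r$ itself is not $s_{r-1}$-invariant. I will handle it first by checking $r=2,3$ against Figure~\ref{fig:examplesOfComplexC} and Figure~\ref{fig:nilHeckeAction}. I will then reduce the general case to the key identity
\[
\partial_{r-1}Q_rs_{r-1}+s_{r-1}Q_r\partial_{r-1}=Q_r+s_{r-1}Q_rs_{r-1},
\]
which follows from $Q_r-s_{r-1}(Q_r)$ being divisible by $x_{r-1}-x_r$, or a similar twisted-Leibniz consequence. The general near case then follows by conjugating by the common prefix and suffix words, using the far-case commutations.
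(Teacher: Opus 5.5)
Your case split is the same as the paper's. You fix the entries of $v$ outside positions $i,i+1$, handle edges of $d$ lowering $v_l$ with $l>i+1$ by far commutativity and with $l<i$ by braid relations, and treat the square $v^{00},v^{01},v^{10},v^{11}$ spanned by positions $i,i+1$ separately. Your far case is essentially the paper's computation, with one correction. After the braid relation, the letter that must pass through the tail $D=\theta_{i+1}\cdots\theta_{r-1}Q_r\hat\theta_{r-1}\cdots\hat\theta_{i+1}$ is $\partial_{i-1}$ or $s_{i-1}$, never $\partial_i$. It therefore always commutes with $Q_r$, so $i+1=r$ causes no trouble there.

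The near case, which you single out as the main obstacle, is where the proposal goes wrong.

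First, your sample component is miscomputed. No component of $\Delta_i$ has target a vertex with $(v_i,v_{i+1})=(0,1)$: the $s_i$ components go from $(0,1)$ to $(1,0)$, and the $\partial_i$ components are loops at $(0,0)$ and $(1,1)$. So there is no term $s_i\circ d+\partial_i\circ d'$. From $v^{11}$ to $v^{01}$ one simply has $d\,\Delta_i=s_iD\partial_i\,\partial_i=0$ and $\Delta_i\,d=0$.

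More seriously, the identity you plan to reduce to is false. By the twisted Leibniz rule $\partial_{r-1}Q_r=\partial_{r-1}(Q_r)+s_{r-1}(Q_r)\,\partial_{r-1}$, together with $s_{r-1}\partial_{r-1}=\partial_{r-1}$ and $\partial_{r-1}s_{r-1}=-\partial_{r-1}$, the left side $\partial_{r-1}Q_rs_{r-1}+s_{r-1}Q_r\partial_{r-1}$ equals $\partial_{r-1}(Q_r)\,s_{r-1}$. That operator has $q$-degree two less than $Q_r+s_{r-1}Q_rs_{r-1}$. Following your plan literally would mean trying to prove a false statement.

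In fact nothing about $Q_r$, and nothing like the center-square identity for $d^2=0$, is needed. The $d$-components on the square are:
\[
s_iD\partial_i\colon R(v^{11})\to R(v^{01}),\quad D\colon R(v^{11})\to R(v^{10}),\quad \partial_iDs_i\colon R(v^{10})\to R(v^{00}),\quad D\colon R(v^{01})\to R(v^{00}).
\]
The four corresponding components of $d\,\Delta_i-\Delta_i\,d$ are
\[
s_iD\partial_i\partial_i=0,\qquad D\partial_i-s_is_iD\partial_i=0,\qquad -\partial_i\partial_iDs_i=0,\qquad \partial_iDs_is_i-\partial_iD=0.
\]
These use only $\partial_i^2=0$ and $s_i^2=\Id$, and they hold uniformly in $i$, including $i+1=r$ where $D=Q_r$. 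This is exactly the paper's ``straightforward'' check on the four vertices $v^{ab}$. With it replacing your near-case plan, your argument becomes the paper's proof.
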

\begin{proof}
	We verify that $\Delta_i$ is a chain map. Fix $v_1,\ldots,v_{i-1},v_{i+2},\ldots,v_r\in\{0,1\}$. For $a,b\in\{0,1\}$, let $v^{a,b}\in\{0,1\}^r$ be the vertex whose $i$th and $(i+1)$th entries are $a$ and $b$, respectively, and whose other entries are $v_1,\ldots,v_{i-1},v_{i+2},\ldots,v_r$. Let $D = \theta_{i+1}\cdots\theta_{r-1}\,Q_r\,\hat{\theta}_{r-1}\cdots\hat{\theta}_{i+1}$ where $\theta_{i+1},\ldots,\theta_{r-1}$ are defined in terms of $v_{i+2},\ldots,v_r$ as in Definition~\ref{def:complexC}. Then the components of $\Delta_i$ and $d$ between these four vertices are \[
		\hspace{-20pt}\begin{tikzcd}[column sep=40pt,row sep=20pt]
			R(v^{01}) \ar[dr,"\textstyle s_i"] & R(v^{11}) \ar[out=-10,in=10,distance=50pt,loop,swap,"\textstyle\partial_i"]\\
			R(v^{00}) \ar[out=170,in=190,distance=50pt,loop,swap,"\textstyle\partial_i"] & R(v^{10})
		\end{tikzcd}\hspace{40pt}\begin{tikzcd}[column sep=40pt,row sep=20pt]
			R(v^{01}) \ar[d,swap,"\textstyle D"] & R(v^{11}) \ar[l,swap,"\textstyle{s_i\,D\,\partial_i}"] \ar[d,swap,"\textstyle D"] \\
			R(v^{00}) & R(v^{10}) \ar[l,swap,"\textstyle{\partial_i\,D\,s_i}"]
		\end{tikzcd}
	\]
	respectively. While there are other components of $d$ that map to and from these four vertices, there are no other such components of $\Delta_i$. The component maps of $[d,\Delta_i]$ between these four vertices may therefore be computed using only the displayed components of $d$. The verification that these component maps of $[d,\Delta_i]$ are zero is straightforward. 

	Now suppose $j$ satisfies $i+2 \leq j \leq r$ and $v_j = 1$. Let $w^{a,b} \in \{0,1\}^r$ be the vertex obtained from $v^{a,b}$ by changing its $j$th entry from $1$ to $0$. The component map of $d$ from $R(v^{a,b})$ to $R(w^{a,b})$ is $\theta_j\cdots\theta_{r-1}\,Q_r\,\hat{\theta}_{r-1}\cdots\hat{\theta}_j$ which commutes with $s_i$ and $\partial_i$ because all subscripts that appear are strictly greater than $i+1$. It follows from this observation that the component map of $[d,\Delta_i]$ from any vertex among $v^{00},v^{10},v^{01},v^{11}$ to any vertex among $w^{00},w^{10},w^{01},w^{11}$ is zero.

	Lastly, suppose $j$ satisfies $1 \leq j \leq i-1$ and $v_j = 1$. Let $u^{a,b}$ be obtained from $v^{a,b}$ by changing its $j$th entry from $1$ to $0$. The component of $d$ from $R(v^{11})$ to $R(u^{11})$ is $\theta_j\cdots\theta_{i-2}\,s_{i-1}\,s_{i}\,D\,\partial_i\,\partial_{i-1}\,\hat{\theta}_{i-2}\cdots\hat{\theta}_j$ which commutes with $\partial_i$ because \begin{align*}
		&\theta_j\cdots\theta_{i-2}\,s_{i-1}\,s_{i}\,D\,\partial_i\,\partial_{i-1}\,\hat{\theta}_{i-2}\cdots\hat{\theta}_j\,\boxed{\partial_i}\\
		=\:\: &\theta_j\cdots\theta_{i-2}\,s_{i-1}\,s_{i}\,D\,\partial_i\,\partial_{i-1}\,\boxed{\partial_i}\,\hat{\theta}_{i-2}\cdots\hat{\theta}_j & \text{(far commutativity)}\\
		=\:\: &\theta_j\cdots\theta_{i-2}\,s_{i-1}\,s_{i}\,D\,\boxed{\partial_{i-1}}\,\partial_i\,\partial_{i-1}\,\hat{\theta}_{i-2}\cdots\hat{\theta}_j & \text{(braid relation)}\\
		=\:\: &\theta_j\cdots\theta_{i-2}\,s_{i-1}\,s_{i}\,\boxed{\partial_{i-1}}\,D\,\partial_i\,\partial_{i-1}\,\hat{\theta}_{i-2}\cdots\hat{\theta}_j & \text{(far commutativity)}\\
		=\:\: &\theta_j\cdots\theta_{i-2}\,\boxed{\partial_{i}}\,s_{i-1}\,s_{i}\,D\,\partial_i\,\partial_{i-1}\,\hat{\theta}_{i-2}\cdots\hat{\theta}_j & \text{(braid relation)}\\
		=\:\: &\boxed{\partial_{i}}\,\theta_j\cdots\theta_{i-2}\,s_{i-1}\,s_{i}\,D\,\partial_i\,\partial_{i-1}\,\hat{\theta}_{i-2}\cdots\hat{\theta}_j & \text{(far commutativity)}
	\end{align*}
	The component of $[d,\Delta_i]$ from $R(v^{11})$ to $R(u^{11})$ is therefore zero. Similar computations verify that the components of $[d,\Delta_i]$ from $R(v^{00})$ to $R(u^{00})$ and from $R(v^{01})$ to $R(u^{10})$ are zero. This completes the proof that $\Delta_i$ is a chain map.
\end{proof}

\begin{lem}\label{lem:XChainMap}
	The endomorphisms $X_1,\ldots,X_r$ are chain maps.
\end{lem}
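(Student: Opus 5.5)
We need to show $[d,X_i]=0$, i.e.\ $[d,x_i] = -[d,\xi_i]$. The plan is to compare components of both sides along faces of the cube, in the same style as the proof of Lemma~\ref{lem:DeltaChainMap}. Multiplication by $x_i$ on each $R(v)$ fails to commute with $d$ only through the non-commutation of $x_i$ with the $\partial_l$ and $s_l$ appearing in the differential components $\theta_j\cdots\theta_{r-1}\,Q_r\,\hat\theta_{r-1}\cdots\hat\theta_j$. The correction $\xi_i$ is designed to cancel exactly these defects.

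\textbf{Step 1: the defect $[d,x_i]$.} Consider the component of $d$ lowering the $j$th entry, from $R(v)$ to $R(v-e_j)$. If $j>i$, every operator in the composite has subscript $\ge j\ge i+1$; only $\partial_i$ or $s_i$ would fail to commute with $x_i$. Hence $[d,x_i]$ can be nonzero only for $j=i$ and for $j<i$.

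\textbf{Step 2: expand the defect.} Write $x_i$ passing through a word using
\[
s_l x_l = x_{l+1}s_l, \qquad \partial_l x_{l+1} = x_l\partial_l - \Id, \qquad \partial_l x_l = x_{l+1}\partial_l + \Id .
\]
Commuting $x_i$ across the word $\theta_j\cdots\theta_{r-1}\,Q_r\,\hat\theta_{r-1}\cdots\hat\theta_j$ moves the index along. The generator $x_i$ enters at position $i$ on the right, gets transported by the $\hat\theta$'s, passes $Q_r$ (which commutes with $x_l$ for $l<r$, and for $l=r$ also commutes since $Q_r$ is a polynomial), then returns through the $\theta$'s.

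Each time a $\partial_l$ is passed, an extra term appears in which that $\partial_l$ is replaced by $\pm\Id$. Since $\partial_l$ with $\Id$ removed, adjacent to $s$'s, reassembles into a word of the shape appearing in $\xi$, I expect each such error term to be a composite of the form ``(component of $\xi_i$ or $\xi$-type swap) followed or preceded by a component of $d$.'' Concretely, the defect terms should be indexed by a pair of positions $(i,j')$ with $v_i\neq v_{j'}$.

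\textbf{Step 3: match with $[d,\xi_i]$.} Since $\xi_i$ swaps entries $i$ and $j$ and $d$ lowers one entry, the components of $[d,\xi_i]$ go from $R(v)$ to $R(w)$ where $w$ is $v$ with one $1$ turned into $0$, possibly after a transposition involving position $i$. These paths are organized by squares/triangles: go $v\to$ (swap $i,j$) $\to$ (lower entry $l$), versus (lower $l$) $\to$ (swap). I will split into cases by the relative position of $l$ with respect to $i$ and $j$ ($l<\min$, between, $l=i$ or $l=j$, $l>\max$).

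In the generic cases ($l$ far from $i,j$), the two composites agree by far commutativity and braid relations. The argument is a boxed-chase exactly like the one in Lemma~\ref{lem:DeltaChainMap}, which slides $s_l$ through $\theta$-words via $s_is_j\partial_i=\partial_js_is_j$ and similar relations, so the components of $[d,\xi_i]$ vanish there. The surviving components occur when the lowered entry is $i$ or $j$ itself, and these should equal minus the defect terms found in Step 2. The signs in the second case of $\xi_i$ (when $j<i$) account for the $-\Id$ coming from $\partial_l x_{l+1}$ versus $+\Id$ from $\partial_l x_l$.

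\textbf{Main obstacle.} The hard part is the bookkeeping in Step 3 for the matching cases. One must verify identities such as
\[
\theta_i\cdots\theta_{j-2}\,s_{j-1}\,\hat\theta_{j-2}\cdots\hat\theta_i
\]
composed with the $d$-component at $j$, and show that these reproduce the Leibniz error terms, including the edge cases $j=i+1$ and $j=r$ where $Q_r$ is adjacent. I would first check $r=2$ and $r=3$ against Figure~\ref{fig:nilHeckeAction} to fix signs. I would then argue by induction on $j-i$, peeling off the outermost $\theta_i,\hat\theta_i$ pair, reducing to the case $i=j-1$, where the needed identity is the single relation $\partial_{j-1}x_{j-1}-x_j\partial_{j-1}=\Id$ rewritten through $s_{j-1}=\Id-(x_{j-1}-x_j)\partial_{j-1}$.
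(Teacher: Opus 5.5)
Your plan is essentially the paper's proof: reduce to $[d,x_i]=[\xi_i,d]$, push $x_i$ through each edge component of $d$ using the twisted Leibniz rules so that each error term is, after far commutativity, literally a single $\xi_i\circ d$ or $d\circ\xi_i$ component, and then check by cases on the relative order of $i,j,l$ that the off-edge squares formed by $\xi_i$ and $d$ commute. Two small corrections: no induction on $j-i$ is needed, since that factorization is immediate; and in the interleaved case $i<l<j$ the two composites are not matched by braid moves but both vanish, because the braid relations produce a factor $\partial_{j-1}\,\partial_{j-1}=0$.
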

\begin{proof}
	We verify that $X_i = x_i + \xi_i$ is a chain map. It suffices to show that $[d,x_i] = [\xi_i,d]$. Fix a vertex $v\in\{0,1\}^r$ and an index $j \in \{1,\ldots,r\}$ for which $v_j = 1$. Let $w \in \{0,1\}^r$ be obtained from $v$ by changing its $j$th entry from $1$ to $0$. We show that the components of $[d,x_i]$ and $[\xi_i,d]$ from $R(v)$ to $R(w)$ are equal using the following cases. Let us first consider the case that $v_i = 1$. \begin{itemize}[noitemsep]
		\item[$\bullet$] Case $i < j$. Then $[d,x_i]\colon R(v) \to R(w)$ is zero by far commutativity. We show that $d\,\xi_i\colon R(v) \to R(w)$ is zero. Suppose $u$ is a vertex for which $\xi_i\colon R(v) \to R(u)$ is nonzero. Then $u_i$ must be $0$, so $d\colon R(u) \to R(w)$ is zero as $w_i = 1$. Similarly, if $u$ is a vertex for which $\xi_i\colon R(u) \to R(w)$ is nonzero, then there is an index $l < i$ such that $u_l = 1$ while $w_l = v_l = 0$, so $d\colon R(v) \to R(u)$ must be zero. 
		\item[$\bullet$] Case $i = j$. Then $d\colon R(v) \to R(w)$ is $\theta_{i}\,\cdots\,\theta_{r-1}\,Q_r\,\hat{\theta}_{r-1}\,\cdots\,\hat{\theta}_{i}$. Observe that 
		$\hat{\theta}_l\,x_l = x_{l+1}\,\hat{\theta}_l$ if $v_{l+1} = 0$ while $\hat{\theta}_l\,x_l = x_{l+1}\,\hat{\theta}_l + \Id$ if $v_{l+1} = 1$, so 
		\[
			\theta_{i}\cdots\theta_{r-1}\,Q_r\,\hat{\theta}_{r-1}\cdots\hat{\theta}_{i}\,\boxed{x_i} = \theta_{i}\cdots\theta_{r-1}\,Q_r\,\boxed{x_r}\,\hat{\theta}_{r-1}\cdots\hat{\theta}_{i} + \sum_{\substack{i \leq l \leq r-1\\v_{l+1} = 1}} \theta_{i}\cdots\theta_{r-1}\,Q_r\,\hat{\theta}_{r-1}\cdots\hat{\theta}_{l+1}\,\hat{\theta}_{l-1}\cdots\hat{\theta}_{i}.
		\]Note that the term in the sum indexed by $l$ is \[
			\theta_{i}\cdots\theta_{l-1}\,s_l\,(\theta_{l+1}\cdots\theta_{r-1}\,Q_r\,\hat{\theta}_{r-1}\cdots\hat{\theta}_{l+1})\,\hat{\theta}_{l-1}\cdots\hat{\theta}_{i} = \theta_{i}\cdots\theta_{l-1}\,s_l\,\hat{\theta}_{l-1}\cdots\hat{\theta}_{i}\,(\theta_{l+1}\cdots\theta_{r-1}\,Q_r\,\hat{\theta}_{r-1}\cdots\hat{\theta}_{l+1})
		\]by far commutativity. If $u$ is the vertex obtained from $v$ by changing its $(l+1)$th entry from $1$ to $0$, then this expression is precisely the composite of $d\colon R(v) \to R(u)$ followed by $\xi_i\colon R(u) \to R(w)$. The sum over all such $l$ is exactly $\xi_i\,d\colon R(v)\to R(w)$. Continuing onwards using $\theta_l\,x_{l+1} = x_l\,\theta_l-\Id$ if $v_{l+1} = 0$ and $\theta_l\,x_{l+1} = x_l\,\theta_l$ if $v_{l+1} = 1$, we obtain \[
			\theta_{i}\cdots\theta_{r-1}\,\boxed{x_r}\,Q_r\,\hat{\theta}_{r-1}\cdots\hat{\theta}_{i} = \boxed{x_i}\,\theta_i\cdots\theta_{r-1}\,Q_r\,\hat{\theta}_{r-1}\cdots\hat{\theta}_i - \sum_{\substack{i\leq l \leq r-1\\v_{l+1}=0}} \theta_i\cdots\theta_{l-1}\,\theta_{l+1}\cdots\theta_{r-1}\,Q_r\,\hat{\theta}_{r-1}\cdots\hat{\theta}_i.
		\]The term in the sum indexed by $l$ is \[
			\theta_i\cdots\theta_{l-1}\,(\theta_{l+1}\cdots\theta_{r-1}\,Q_r\,\hat{\theta}_{r-1}\cdots\hat{\theta}_{l+1})\,s_l\,\hat{\theta}_{l-1}\cdots\hat{\theta}_i = (\theta_{l+1}\cdots\theta_{r-1}\,Q_r\,\hat{\theta}_{r-1}\cdots\hat{\theta}_{l+1})\,\theta_i\cdots\theta_{l-1}\,s_l\,\hat{\theta}_{l-1}\cdots\hat{\theta}_i
		\]again by far commutativity. If $u$ is the vertex obtained from $v$ by swapping its $i$th and $(l+1)$th entries, then this expression is precisely the composite of $\xi_i\colon R(v) \to R(u)$ followed by $d\colon R(u) \to R(w)$, and the sum over all such $l$ is exactly $d\,\xi_i \colon R(v) \to R(w)$. Thus $[d,x_i] = [\xi_i,d]$ in this case.
		\item[$\bullet$] Case $j < i$. Then $d \colon R(v) \to R(w)$ is $\theta_j\cdots\theta_{i-2}\,s_{i-1}\,\theta_{i}\cdots\theta_{r-1}\,Q_r\,\hat{\theta}_{r-1}\cdots\hat{\theta}_{i}\,\partial_{i-1}\,\hat{\theta}_{i-2}\cdots\hat{\theta}_j$, whose commutator with $x_i$ is \[
			- \,\theta_j\cdots\theta_{i-2}\,s_{i-1}\,(\theta_{i}\cdots\theta_{r-1}\,Q_r\,\hat{\theta}_{r-1}\cdots\hat{\theta}_{i})\,\hat{\theta}_{i-2}\cdots\hat{\theta}_j = - \,\theta_j\cdots\theta_{i-2}\,s_{i-1}\,\hat{\theta}_{i-2}\cdots\hat{\theta}_j\,(\theta_{i}\cdots\theta_{r-1}\,Q_r\,\hat{\theta}_{r-1}\cdots\hat{\theta}_{i})
		\]Let $u$ be the vertex obtained from $v$ by changing its $i$th entry from $1$ to $0$. This expression, sign included, is precisely $d\colon R(v) \to R(u)$ followed by $\xi_i\colon R(u) \to R(w)$. By analysis similar to the first case above, there is no other vertex $u'$ such that both $d\colon R(v) \to R(u')$ and $\xi_i\colon R(u') \to R(w)$ are nonzero, nor is there a vertex $u'$ such that both $\xi_i\colon R(v) \to R(u')$ and $d\colon R(u')\to R(w)$ are nonzero. Hence $[d,x_i] = [\xi_i,d]$ in this case as well.
	\end{itemize}Next, let us consider the case that $v_i = 0$.
	\begin{itemize}[noitemsep]
		\item[$\bullet$] Case $i < j$. The components of $[d,x_i]$, $d\,\xi_i$, and $\xi_i\,d$ from $R(v)$ to $R(w)$ are straightforwardly zero. 
		\item[$\bullet$] Case $j < i$. Then $d\colon R(v) \to R(w)$ is $\theta_j\cdots\theta_{i-2}\,\partial_{i-1}\,\theta_{i}\cdots\theta_{r-1}\,Q_r\,\hat{\theta}_{r-1}\cdots\hat{\theta}_{i}\,s_{i-1}\,\hat{\theta}_{i-2}\cdots\hat{\theta}_j$, whose commutator with $x_i$ is \[
			- \,\theta_j\cdots\theta_{i-2}\,(\theta_{i}\cdots\theta_{r-1}\,Q_r\,\hat{\theta}_{r-1}\cdots\hat{\theta}_{i})\,s_{i-1}\,\hat{\theta}_{i-2}\cdots\hat{\theta}_j = - \,(\theta_{i}\cdots\theta_{r-1}\,Q_r\,\hat{\theta}_{r-1}\cdots\hat{\theta}_{i})\,\theta_j\cdots\theta_{i-2}\,s_{i-1}\,\hat{\theta}_{i-2}\cdots\hat{\theta}_j.
		\]This expression is $\xi_i\colon R(v) \to R(u)$ followed by $d\colon R(u) \to R(w)$ where $u$ is obtained from $v$ by swapping its $i$th and $j$th entries. There are no other contributions to $[\xi_i,d]\colon R(v) \to R(w)$ just as in the third case above, which completes the argument. 
	\end{itemize}We have verified that $[d,x_i] = [\xi_i,d]$ along any edge of the cube $[0,1]^r$. Clearly all other components of $[d,x_i]$ are zero, so we must verify that all other components of $[\xi_i,d]$ are zero as well. 

	Suppose we have a vertex $v$ and indices $j,l$ such that $i,j,l$ are all distinct, $v_l = 1$, and $v_i \neq v_j$. Let $w$ be the vertex obtained from $v$ by changing its $l$th entry from $1$ to $0$ and by swapping its $i$th and $j$th entries. Clearly $v$ and $w$ are not endpoints of an edge of the cube. We show that the component of $[\xi_i,d]$ from $R(v)$ to $R(w)$ is zero. This argument will complete the proof because all other components of $[\xi_i,d]$ are either along edges of the cube or are clearly zero. Let $u$ be obtained from $v$ by swapping its $i$th and $j$th entries, and let $u'$ be obtained from $v$ by changing its $l$th entry from $1$ to $0$. We show that \[
		\begin{tikzcd}
			R(u) \ar[d,swap,"\textstyle d"] & R(v) \ar[l,swap,"\textstyle \xi_i"] \ar[d,swap,"\textstyle d"]\\
			R(w) & R(u') \ar[l,swap,"\textstyle \xi_i"]
		\end{tikzcd}
	\]commutes. There are six cases depending on the ordering of $i,j,l$. Let us first consider the case that $i < j$. As the components of $\xi_i$ in the diagram would be zero if $(v_i,v_j) = (0,1)$, we may assume $(v_i,v_j) = (1,0)$. \begin{itemize}[noitemsep]
		\item[$\bullet$] Case $i < j < l$. The diagram commutes by far commutativity.
		\item[$\bullet$] Case $i < l < j$. We first show that $d\,\xi_i$ is zero. Note that $d\,\xi_i$ is given by \[
			(\theta_l\cdots\theta_{j-2}\,s_{j-1}\,\theta_j\cdots\theta_{r-1}\,Q_r\,\hat{\theta}_{r-1}\cdots\hat{\theta}_j\,\partial_{j-1}\,\hat{\theta}_{j-2}\cdots\hat{\theta}_l)\,(\theta_i\cdots\theta_{l-2}\,s_{l-1}\,\theta_l\cdots\theta_{j-2}\,s_{j-1}\,\hat{\theta}_{j-2}\cdots\hat{\theta}_l\,\partial_{l-1}\,\hat{\theta}_{l-2}\cdots\hat{\theta}_i)
		\]so it suffices to show that $\partial_{j-1}\,\hat{\theta}_{j-2}\cdots\hat{\theta}_l\,(s_{l-1}\,\theta_l\cdots\theta_{j-2}\,s_{j-1}\,\hat{\theta}_{j-2}\cdots\hat{\theta}_l\,\partial_{l-1}) = 0$. Let $\theta_{m,-1} \in \{\partial_{m-1},s_{m-1}\}$ be obtained from $\theta_m$ by decreasing its subscript from $m$ to $m-1$. Similarly define $\hat{\theta}_{m,-1}$. Note that $\theta_{m,-1}$ and $\theta_{m-1}$ do not have to agree, since they are defined in terms of $v_{m+1}$ and $v_m$. The key identities are $\hat{\theta}_m\,s_{m-1}\,\theta_m = \theta_{m,-1}\,s_m\,\hat{\theta}_{m,-1}$ and $\hat{\theta}_{m,-1}\,\hat{\theta}_m\,\partial_{m-1} = \partial_m\,\hat{\theta}_{m,-1}\,\hat{\theta}_m$, which follow from the braid relations. We obtain \begin{align*}
			&\quad\: \partial_{j-1}\,\boxed{\hat{\theta}_{j-2}\cdots\hat{\theta}_l\,s_{l-1}\,\theta_l\cdots\theta_{j-2}}\,s_{j-1}\,\hat{\theta}_{j-2}\cdots\hat{\theta}_l\,\partial_{l-1}\\
			&= \partial_{j-1}\,\theta_{l,-1}\cdots\theta_{j-2,-1}\,s_{j-2}\,\boxed{\hat{\theta}_{j-2,-1}\cdots\hat{\theta}_{l,-1}}\,s_{j-1}\,\hat{\theta}_{j-2}\cdots\hat{\theta}_l\,\partial_{l-1} & \text{(braid relation)}\\
			&= \partial_{j-1}\,\theta_{l,-1}\cdots\theta_{j-2,-1}\,\boxed{s_{j-2}\,s_{j-1}\,\hat{\theta}_{j-2,-1}\,\hat{\theta}_{j-2}\cdots\hat{\theta}_{l,-1}\,\hat{\theta}_l\,\partial_{l-1}} & \text{(far commutativity)}\\
			&= \boxed{\partial_{j-1}}\,\theta_{l,-1}\cdots\theta_{j-2,-1}\,\boxed{\partial_{j-1}}\,s_{j-2}\,s_{j-1}\,\hat{\theta}_{j-2,-1}\,\hat{\theta}_{j-2}\cdots\hat{\theta}_{l,-1}\,\hat{\theta}_l & \text{(braid relation)}
		\end{align*}which equals zero by far commutativity and the relation $\partial_{j-1}\,\partial_{j-1} = 0$. Next, we claim that $\xi_i\,d$ is also zero. Since $\xi_i\,d$ is \[
			(\theta_i\cdots\theta_{l-2}\,\partial_{l-1}\,\theta_l\cdots\theta_{j-2}\,s_{j-1}\,\hat{\theta}_{j-2}\cdots\hat{\theta}_l\,s_{l-1}\,\hat{\theta}_{l-2}\cdots\hat{\theta}_i)(\theta_l\cdots\theta_{j-2}\,\partial_{j-1}\,\theta_j\cdots\theta_{r-1}\,Q_r\,\hat{\theta}_{r-1}\cdots\hat{\theta}_j\,s_{j-1}\,\hat{\theta}_{j-2}\cdots\hat{\theta}_l)
		\]it suffices to show that $(\partial_{l-1}\,\theta_l\cdots\theta_{j-2}\,s_{j-1}\,\hat{\theta}_{j-2}\cdots\hat{\theta}_l\,s_{l-1})\theta_l\cdots\theta_{j-2}\,\partial_{j-1} = 0$. This follows by the same reasoning. 
		\item[$\bullet$] Case $l < i < j$. Far commutativity reduces the problem to the case that $l = i - 1$. Using the same notation and reasoning as in the previous case, we have \begin{align*}
			d\,\xi_i &= \partial_{i-1}\,\theta_i\cdots\theta_{j-2}\, s_{j-1}\,\theta_j\cdots \theta_{r-1}\,Q_r\,\hat{\theta}_{r-1} \cdots \hat{\theta}_{j}\,\partial_{j-1}\,\boxed{\hat{\theta}_{j-2}\cdots \hat{\theta}_{i}\,s_{i-1}\,\theta_i\cdots \theta_{j-2}}\,s_{j-1}\,\hat{\theta}_{j-2}\cdots\hat{\theta}_i\\
			&= \partial_{i-1}\,\theta_i\cdots\theta_{j-2}\, s_{j-1}\,\theta_j\cdots \theta_{r-1}\,Q_r\,\hat{\theta}_{r-1} \cdots \hat{\theta}_{j}\,\partial_{j-1}\,\boxed{\theta_{i,-1}\cdots\theta_{j-2,-1}}\,s_{j-2}\,\boxed{\hat{\theta}_{j-2,-1}\cdots\hat{\theta}_{i,-1}} \,s_{j-1}\,\hat{\theta}_{j-2}\cdots\hat{\theta}_i\\
			&= \boxed{\partial_{i-1}}\,\theta_i\,\theta_{i,-1}\cdots\theta_{j-2}\,\theta_{j-2,-1}\, s_{j-1}\,\theta_j\cdots \theta_{r-1}\,Q_r\,\hat{\theta}_{r-1} \cdots \hat{\theta}_{j}\,\boxed{\partial_{j-1}}\,s_{j-2}\,s_{j-1}\,\hat{\theta}_{j-2,-1}\,\hat{\theta}_{j-2}\cdots\hat{\theta}_{i,-1}\,\hat{\theta}_i\\
			&= \theta_i\,\theta_{i,-1}\cdots\theta_{j-2}\,\theta_{j-2,-1}\,\boxed{\partial_{j-2}}\,s_{j-1}\,\theta_j\cdots \theta_{r-1}\,Q_r\,\hat{\theta}_{r-1} \cdots \hat{\theta}_{j}\,\boxed{s_{j-2}}\,s_{j-1}\,\hat{\theta}_{j-2,-1}\,\hat{\theta}_{j-2}\cdots\hat{\theta}_{i,-1}\,\hat{\theta}_i \,\partial_{i-1}\\
			&= \theta_i\,\boxed{\theta_{i,-1}}\cdots\theta_{j-2}\,\boxed{\theta_{j-2,-1}}\,s_{j-1}\,s_{j-2}\,\partial_{j-1}\,\theta_j\cdots \theta_{r-1}\,Q_r\,\hat{\theta}_{r-1} \cdots \hat{\theta}_{j}\,s_{j-1}\,\boxed{\hat{\theta}_{j-2,-1}}\,\hat{\theta}_{j-2}\cdots\boxed{\hat{\theta}_{i,-1}}\,\hat{\theta}_i \,\partial_{i-1}\\
			&= \theta_i\cdots\theta_{j-2}\,s_{j-1}\,\boxed{\theta_{i,-1}\cdots\theta_{j-2,-1}\,s_{j-2}\,\hat{\theta}_{j-2,-1}\cdots\,\hat{\theta}_{i,-1}}\,\partial_{j-1}\,\theta_j\cdots \theta_{r-1}\,Q_r\,\hat{\theta}_{r-1} \cdots \hat{\theta}_{j}\,s_{j-1}\,\hat{\theta}_{j-2}\cdots\hat{\theta}_i \,\partial_{i-1}\\
			&= \theta_i\cdots\theta_{j-2}\,s_{j-1}\,\hat{\theta}_{j-2}\cdots\hat{\theta}_i\,s_{i-1}\,\theta_i\cdots\theta_{j-2}\,\partial_{j-1}\,\theta_j\cdots \theta_{r-1}\,Q_r\,\hat{\theta}_{r-1} \cdots \hat{\theta}_{j}\,s_{j-1}\,\hat{\theta}_{j-2}\cdots\hat{\theta}_i \,\partial_{i-1} = \xi_i\,d
		\end{align*}
	\end{itemize}
	This completes the analysis when $i < j$. If $j < i$, then we may assume that $(v_j,v_i) = (1,0)$ as before. By definition of $\xi_i$, the components of $\xi_i$ in the diagram are simply the negation of the corresponding components of $\xi_j$. The previous case applied to $\xi_j$ implies the present case. This completes the proof that $X_i$ is a chain map.
\end{proof}

\begin{lem}\label{lem:XsCommute}
	The maps $X_i$ and $X_j$ commute for any $i,j\in\{1,\ldots,r\}$.
\end{lem}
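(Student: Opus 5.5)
We must show $[X_i,X_j]=0$. Since $X_i = x_i + \xi_i$ and the $x$'s commute with each other, the plan is to expand
\[
[X_i,X_j] = [x_i,\xi_j] + [\xi_i,x_j] + [\xi_i,\xi_j]
\]
and organize the verification by the change of vertex each term produces. Assume $i<j$ without loss of generality.

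The terms $[x_i,\xi_j]$ and $[\xi_i,x_j]$ have components only along ``transposition'' arrows $R(v)\to R(w)$, where $w$ swaps two unequal entries of $v$. The term $[\xi_i,\xi_j]$ has components along composites of two such swaps. So we sort the components of $[X_i,X_j]$ into two kinds.

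First kind: components from $R(v)$ to $R(w)$ where $w$ is a single swap of $v$. The transpositions involved either both move the $i$th and $j$th entries, or move one of them together with a third index $l$. Here we compute commutators of $x_i$ and $x_j$ with the word $\theta_a\cdots s_{b-1}\cdots\hat\theta_a$. We push $x$ through the word exactly as in the case $i=j$ of Lemma~\ref{lem:XChainMap}, using
\begin{itemize}[noitemsep]
\item $\hat\theta_l x_l = x_{l+1}\hat\theta_l$ or $x_{l+1}\hat\theta_l+\Id$, and
\item $\theta_l x_{l+1} = x_l\theta_l - \Id$ or $x_l\theta_l$.
\end{itemize}
The correction terms produced this way are words with one letter deleted. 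We expect to identify them with the composites $\xi_i\xi_j$ or $\xi_j\xi_i$ that pass through an intermediate vertex, and these should cancel the corresponding terms of $[\xi_i,\xi_j]$ landing on the same $w$. The signs in the definition of $\xi$ for the second type of component (the one with a minus sign) should make this cancellation work.

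Second kind: components of $[\xi_i,\xi_j]$ from $R(v)$ to $R(w)$ where $w$ is obtained from $v$ by two genuinely different swaps. We split into cases according to the relative order of the four (or three) indices involved, as in the six-case analysis at the end of Lemma~\ref{lem:XChainMap}.
\begin{itemize}[noitemsep]
\item When the two swaps involve disjoint index intervals, or nested intervals whose words commute, the two composites agree by far commutativity.
\item In overlapping cases, we use the identities $\hat\theta_m s_{m-1}\theta_m = \theta_{m,-1}s_m\hat\theta_{m,-1}$ and $\hat\theta_{m,-1}\hat\theta_m\partial_{m-1} = \partial_m\hat\theta_{m,-1}\hat\theta_m$ to slide letters. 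The expected outcome is that either both composites vanish because a $\partial^2=0$ appears, or they agree after the braid manipulations.
\end{itemize}

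The main obstacle is the first kind of component, namely the bookkeeping of correction terms against the $\xi\xi$ composites through intermediate vertices. In particular we must check that every such composite is matched exactly once with the correct sign, including the case where the intermediate vertex requires swapping with an index $l$ lying between $i$ and $j$. As a sanity check we would first verify the case $r=3$ against Figure~\ref{fig:nilHeckeAction}. After that, far commutativity should reduce the general case to configurations where the relevant indices are adjacent.
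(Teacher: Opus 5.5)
Your plan is essentially the paper's proof. Like the paper, you expand $[X_i,X_j]=[\xi_i,x_j]+[x_i,\xi_j]+[\xi_i,\xi_j]$, and you treat components where $v$ and $w$ differ in two entries by pushing $x$ through the words, matching the one-letter-deleted correction terms with $\xi\xi$ composites through an intermediate vertex (the paper uses $\xi_j=-\xi_i$ on the shared component). You also treat four-entry components by their interlacing pattern. One correction: the nested pattern $(\mathbf{1},1,0,\mathbf{0})$ does not follow from far commutativity; it needs the braid-relation computation from the last case of Lemma~\ref{lem:XChainMap}, and it is the crossing pattern $(\mathbf{1},1,\mathbf{0},0)$ where both composites vanish by $\partial^2=0$.
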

\begin{proof}
	We show that the component maps of $[\xi_i,x_j] + [x_i,\xi_j] + [\xi_i,\xi_j]$ are zero. First consider the case of a component map $R(v) \to R(w)$ where $v$ and $w$ differ in exactly four entries. In this case, the component maps of $[\xi_i,x_j]$ and $[x_i,\xi_j]$ are zero, so we must show that $[\xi_i,\xi_j] = 0$. Let $l,m$ be indices such that $i,j,l,m$ are pairwise distinct, the subsequence of $v$ consisting of its $j$th and $m$th entries is $(1,0)$, and the subsequence of $v$ consisting of its $i$th and $l$th entries is $(\mathbf{1},\mathbf{0})$, which we write in bold to distinguish it from the entries indexed by $j,m$. Note that if $j < m$, then we are assuming that $(v_j,v_m) = (1,0)$ while if $m < j$, we are assuming that $(v_m,v_j) = (1,0)$. Let $u$ be obtained from $v$ by swapping its $j$th and $m$th entries, and let $u'$ be obtained from $v$ by swapping its $i$th and $l$th entries. We show that \[
		\begin{tikzcd}
			R(u) \ar[d,"\xi_i"] & R(v) \ar[l,swap,"\xi_j"] \ar[d,"\xi_i"]\\
			R(w) & R(u') \ar[l,swap,"\xi_j"]
		\end{tikzcd}
	\]commutes. Modulo swapping which pair we write as bold, there are three possibilities for how the two subsequences $(1,0)$ and $(\mathbf{1},\mathbf{0})$ are interlaced within $v$. \begin{itemize}[noitemsep]
		\item[$\bullet$] Case $(\mathbf{1},\mathbf{0},1,0)$ or $(1,0,\mathbf{1},\mathbf{0})$. The diagram commutes by far commutativity.
		\item[$\bullet$] Case $(\mathbf{1},1,\mathbf{0},0)$ or $(1,\mathbf{1},0,\mathbf{0})$. The diagram commutes because both $\xi_i\,\xi_j$ and $\xi_j\,\xi_i$ are zero by a computation similar to the one in the second-to-last case in the proof of Lemma~\ref{lem:XChainMap}. 
		\item[$\bullet$] Case $(\mathbf{1},1,0,\mathbf{0})$ or $(1,\mathbf{1},\mathbf{0},0)$. The diagram commutes by a computation similar to the one in the last case in the proof of Lemma~\ref{lem:XChainMap}. 
	\end{itemize}

	Now consider a component map $R(v) \to R(w)$ where $v$ and $w$ differ in exactly two entries, indexed by $l < m$. If $(v_l,v_m) \neq (1,0)$, then the component map of $[\xi_i,x_j] + [x_i,\xi_j] + [\xi_i,\xi_j]$ is zero, so we assume that $(v_l,v_m) = (1,0)$. Without loss of generality, assume that $i < j$. \begin{itemize}[noitemsep]
		\item[$\bullet$] Case $l = i$ and $m = j$. Then $\xi_j = - \xi_i$ as component maps from $R(v)$ to $R(w)$ so $[\xi_i,x_j] + [x_i,\xi_j] = [\xi_i,x_i + x_j]$. The component map of $\xi_i$ is $\theta_i\cdots\theta_{j-2}\,s_{j-1}\,\hat{\theta}_{j-2}\cdots\hat{\theta}_i$ and by reasoning similar to the second case of the proof of Lemma~\ref{lem:XChainMap}, we have \begin{align*}
			\theta_i\cdots\theta_{j-2}\,s_{j-1}\,\hat{\theta}_{j-2}\cdots\hat{\theta}_i\,x_i &= x_j\,\theta_i\cdots\theta_{j-2}\,s_{j-1}\,\hat{\theta}_{j-2}\cdots\hat{\theta}_i + \sum_{\substack{i \leq l \leq j-2\\v_{l+1}=1}} \theta_i\cdots\theta_{j-2}\, s_{j-1}\, \hat{\theta}_{j-2}\cdots\hat{\theta}_{l+1}\,\hat{\theta}_{l-1}\,\cdots\hat{\theta}_{i}\\
			\theta_i\cdots\theta_{j-2}\,s_{j-1}\,\hat{\theta}_{j-2}\cdots\hat{\theta}_i\,x_j &= x_i\,\theta_i\cdots\theta_{j-2}\,s_{j-1}\,\hat{\theta}_{j-2}\cdots\hat{\theta}_i - \sum_{\substack{i\leq l \leq j-2\\v_{l+1}=0}} \theta_i\cdots\theta_{l-1}\,\theta_{l+1}\cdots\theta_{j-2}\,s_{j-1}\,\hat{\theta}_{j-2}\cdots\hat{\theta}_i
		\end{align*}so \begin{align*}
			[\xi_i,x_i + x_j] &= \sum_{\substack{i \leq l \leq j-2\\v_{l+1}=1}} (\theta_i\cdots\theta_{l-1}\,s_l\,\hat{\theta}_{l-1}\cdots\hat{\theta}_{i})\,(\theta_{l+1}\cdots\theta_{j-2}\, s_{j-1}\, \hat{\theta}_{j-2}\cdots\hat{\theta}_{l+1})\\
			&\hspace{50pt} - \sum_{\substack{i\leq l \leq j-2\\v_{l+1}=0}} (\theta_{l+1}\cdots\theta_{j-2}\,s_{j-1}\,\hat{\theta}_{j-2}\cdots\hat{\theta}_{l+1})\,(\theta_i\cdots\theta_{l-1}\,s_l\,\hat{\theta}_{l-1}\cdots\hat{\theta}_i)
		\end{align*}For each $l \in \{i,\ldots,j-2\}$ with $v_{l+1} = 1$, let $u$ be the vertex obtained from $v$ by swapping its $(l+1)$th and $j$th entries. The expression in the first sum indexed by $l$ is precisely the negative of the composite of $\xi_j\colon R(v) \to R(u)$ followed by $\xi_i\colon R(u) \to R(w)$. So the first sum is exactly $-\xi_i\,\xi_j\colon R(v) \to R(w)$. Similar reasoning shows that the second sum is exactly $-\xi_j\,\xi_i\colon R(v) \to R(w)$ so $[\xi_i,x_i+x_j] = [\xi_j,\xi_i]$ as required. 
		\item[$\bullet$] Case $l = i$ and $m < j$. The components of $[x_i,\xi_j]$ and $[\xi_i,\xi_j]$ are zero, and the component of $[\xi_i,x_j]$ is zero by far commutativity.
		\item[$\bullet$] Case $l = i$ and $j < m$. If $v_j = 1$, then $\xi_i = \theta_i\cdots\theta_{j-2}\,s_{j-1}\,\theta_j\cdots\theta_{m-2}\,s_{m-1}\,\hat{\theta}_{m-2}\cdots\hat{\theta}_j\,\partial_{j-1}\,\hat{\theta}_{j-2}\cdots\hat{\theta}_i$ so \begin{align*}
			[\xi_i,x_j] &= - \,\theta_i\cdots\theta_{j-2}\,s_{j-1}\,\theta_j\cdots\theta_{m-2}\,s_{m-1}\,\hat{\theta}_{m-2}\cdots\hat{\theta}_j\,\hat{\theta}_{j-2}\cdots\hat{\theta}_i\\
			&= -\,(\theta_i\cdots\theta_{j-2}\,s_{j-1}\,\hat{\theta}_{j-2}\cdots\hat{\theta}_i)\,(\theta_j\cdots\theta_{m-2}\,s_{m-1}\,\hat{\theta}_{m-2}\cdots\hat{\theta}_j)
		\end{align*}which is precisely the negative of the composite of $\xi_j\colon R(v) \to R(u)$ followed by $\xi_i\colon R(u) \to R(w)$ where $u$ is obtained from $v$ by swapping its $j$th and $m$th entries. There are no other contributions to $\xi_i\,\xi_j\colon R(u) \to R(w)$ and the component map of $[x_i,\xi_j]$ is zero, so $[\xi_i,x_j] + [x_i,\xi_j] + [\xi_i,\xi_j] = 0$ in this case. The case where $v_j = 0$ is handled similarly. 
	\end{itemize}The remaining cases where $i \neq l$ and $j = m$ follow by the same reasoning as the last two cases above. 

	All other component maps of $[\xi_i,x_j] + [x_i,\xi_j] + [\xi_i,\xi_j]$ are easily seen to be zero, so our proof is complete.
\end{proof}

\begin{lem}\label{lem:nilCoxeterRelations}
	The nil-Coxeter relations for $\Delta_1,\ldots,\Delta_{r-1}$ hold. Explicitly, they are $\Delta_i\,\Delta_i = 0$, the braid relation $\Delta_i\,\Delta_{i+1}\,\Delta_i = \Delta_{i+1}\,\Delta_i\,\Delta_{i+1}$, and far commutativity $\Delta_i\,\Delta_j = \Delta_j\,\Delta_i$ for $j\notin \{i,i+1\}$.
\end{lem}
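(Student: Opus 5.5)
The plan is to verify each relation component by component, exploiting the very simple structure of $\Delta_i$. By Definition~\ref{def:nilHeckeActionOnC}, $\Delta_i$ only involves the $i$th and $(i+1)$th entries of a vertex. It acts by $\partial_i$ on $R(v)$ when $v_i = v_{i+1}$. When $(v_i,v_{i+1}) = (0,1)$, it maps $R(v)$ to $R(w)$ by $s_i$, where $w$ is $v$ with these two entries swapped. It kills $R(v)$ when $(v_i,v_{i+1}) = (1,0)$. Every component of a product of $\Delta$'s is therefore a word in the $\partial_j$ and $s_j$, and the relations reduce to identities in $\cl{H}_r$ listed at the start of this appendix.

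For $\Delta_i\,\Delta_i = 0$, we consider the three cases for $(v_i,v_{i+1})$. If $v_i = v_{i+1}$, the composite is $\partial_i\partial_i = 0$. If $(v_i,v_{i+1}) = (0,1)$, the first application lands in a vertex with pattern $(1,0)$, which $\Delta_i$ kills. If the pattern is $(1,0)$, the first application is already zero. Far commutativity for $|i-j|>1$ is handled the same way. The entries indexed by $i,i+1$ and $j,j+1$ are disjoint, so both composites $\Delta_i\Delta_j$ and $\Delta_j\Delta_i$ send $R(v)$ to the same target vertex. Each component is the product of one operator from $\{\partial_i,s_i\}$ with one from $\{\partial_j,s_j\}$, and these commute by far commutativity in $\cl{H}_r$. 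If either factor vanishes, both composites vanish.

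The braid relation is the main step. We fix the entries of $v$ outside positions $i,i+1,i+2$ and examine the eight possibilities for the triple $(v_i,v_{i+1},v_{i+2})$. For each triple, we trace both words $\Delta_i\Delta_{i+1}\Delta_i$ and $\Delta_{i+1}\Delta_i\Delta_{i+1}$, recording at each step whether the operator applied is $\partial$ or $s$, or whether it kills the summand. The two sides move the entries in the same way, since swaps of adjacent $(0,1)$ patterns compose to the same permutation. In the constant cases $000$ and $111$, both sides are $\partial_i\partial_{i+1}\partial_i = \partial_{i+1}\partial_i\partial_{i+1}$. In the remaining cases, the surviving components should be mixed words such as $s_i\,\partial_{i+1}\,s_i$ against $s_{i+1}\,\partial_i\,s_{i+1}$, or $\partial_i\,s_{i+1}\,s_i$ against $s_{i+1}\,s_i\,\partial_{i+1}$. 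These are matched by the mixed braid relations $s_i\partial_{i+1}s_i = s_{i+1}\partial_i s_{i+1}$ and $s_is_{i+1}\partial_i = \partial_{i+1}s_is_{i+1}$, together with $s_i\partial_i = \partial_i$ and $\partial_is_i = -\partial_i$.

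I expect the main obstacle to be bookkeeping in this case analysis. Some triples give zero on one side only after a step reaches a $(1,0)$ pattern. For example, for $001$ one side may die while the other reduces to a word containing $\partial_j\partial_j$ or $\partial_j s_j\partial_j$. Such a word vanishes because the image and kernel of $\partial_j$ coincide. I would tabulate all eight cases explicitly, as in Figure~\ref{fig:nilHeckeAction} for $r=3$, and check that each pair of words agrees in $\cl{H}_r$, using also $\partial_i s_{i+1} s_i = s_{i+1}s_i\partial_{i+1}$ and its variants derived from the listed relations.
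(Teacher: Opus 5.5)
Your proposal is correct and is essentially the paper's proof: a componentwise check in which the braid relation is verified over the eight vertices $v^{abc}$. You also rightly read far commutativity as $|i-j|>1$ (the case $j=i-1$ is the braid relation, not commutation). When you carry out your table, the only nonzero components are the following:
\begin{itemize}
\item $v^{000}\to v^{000}$ and $v^{111}\to v^{111}$, both equal to $\partial_i\partial_{i+1}\partial_i$;
\item $v^{001}\to v^{100}$, matched by $s_is_{i+1}\partial_i=\partial_{i+1}s_is_{i+1}$;
\item $v^{011}\to v^{110}$, matched by $\partial_is_{i+1}s_i=s_{i+1}s_i\partial_{i+1}$.
\end{itemize}
The other four triples die on both sides by reaching a $(1,0)$ pattern. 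So the words $s_i\partial_{i+1}s_i$ and the $\partial_j\partial_j$-type cancellations you anticipate (e.g.\ for $001$) never actually arise.
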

\begin{proof}
	Far commutativity and the relation $\Delta_i\,\Delta_i = 0$ are straightforward to verify. 

	For the braid relation, fix $v_1,\ldots,v_{i-1},v_{i+3},\ldots,v_r\in\{0,1\}$, and let $v^{a,b,c}\in\{0,1\}^r$ be the vertex with the given fixed entries and whose $i$th, $(i+1)$th, and $(i+2)$th entries are $a,b,c \in \{0,1\}$, respectively. The components of $\Delta_i$ and $\Delta_{i+1}$ between these eight vertices for $(a,b,c) \in \{0,1\}^3$ are \[
		\begin{tikzpicture}
			\node[draw,rounded corners,inner sep=2pt] at (0,0) {
				\begin{tikzcd}[column sep=0pt,row sep=10pt]
					& R(v^{011}) \ar[dr,swap,"\textstyle s_i"] & & R(v^{111}) \ar[out=290,in=250,distance=50pt,loop,"\textstyle\partial_i"]\\
					R(v^{001}) \ar[out=70,in=110,distance=50pt,loop,swap,"\textstyle\partial_i"] & & R(v^{101}) &\\
					& R(v^{010}) \ar[dr,swap,"\textstyle s_i"] & & R(v^{110}) \ar[out=290,in=250,distance=50pt,loop,"\textstyle\partial_i"]\\
					R(v^{000}) \ar[out=70,in=110,distance=50pt,loop,swap,"\textstyle\partial_i"] & & R(v^{100}) &
				\end{tikzcd}
			};
			\node[draw,rounded corners,inner sep=2pt] at (6,0) {
				\begin{tikzcd}[column sep=0pt,row sep=10pt]
					& R(v^{011}) \ar[out=170,in=190,distance=50pt,loop,swap,"\textstyle\partial_{i+1}"] & & R(v^{111}) \ar[out=170,in=190,distance=50pt,loop,swap,"\textstyle\partial_{i+1}"]\\
					R(v^{001}) \ar[rd,swap,"\textstyle s_{i+1}"] & & R(v^{101}) \ar[rd,swap,"\textstyle s_{i+1}"] &\\
					& R(v^{010}) & & R(v^{110}) \\
					R(v^{000}) \ar[out=-10,in=10,distance=50pt,loop,swap,"\textstyle\partial_{i+1}"] & & R(v^{100}) \ar[out=-10,in=10,distance=50pt,loop,swap,"\textstyle\partial_{i+1}"] &
				\end{tikzcd} 
			};
		\end{tikzpicture}
	\]All other components of $\Delta_i$ and $\Delta_{i+1}$ having these vertices as their source or target are zero. It is straightforward now to verify that $\Delta_i\,\Delta_{i+1}\,\Delta_i$ and $\Delta_{i+1}\,\Delta_i\,\Delta_{i+1}$ have the same component maps between these eight vertices. There are only four nonzero component maps: $R(v^{111}) \to R(v^{111})$ and $R(v^{000}) \to R(v^{000})$, where the maps are both $\partial_i\,\partial_{i+1}\,\partial_i = \partial_{i+1}\,\partial_i\,\partial_{i+1}$, and $R(v^{011})\to R(v^{110})$ and $R(v^{001}) \to R(v^{100})$, where the maps are $\partial_i\,s_{i+1}\,s_i = s_{i+1}\,s_i\,\partial_{i+1}$ and $s_i\,s_{i+1}\,\partial_i = \partial_{i+1}\,s_i\,s_{i+1}$, respectively.
\end{proof}

\begin{lem}\label{lem:twistedLeibnizRule}
	The relations $\Delta_i\,X_i = \Id + \,X_{i+1}\,\Delta_i$ and $\Delta_i\,X_{i+1} = -\Id + \,X_i\,\Delta_i$ hold.
\end{lem}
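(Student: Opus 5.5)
We verify the first identity $\Delta_i\,X_i = \Id + X_{i+1}\,\Delta_i$ componentwise; the second is deduced from it. Write $X_i = x_i + \xi_i$ and split the identity into its diagonal part and its off-diagonal part. Since $\Delta_i$ has components $R(v)\to R(v)$ (when $v_i = v_{i+1}$) and $R(v)\to R(w)$ with $w$ the $(i,i+1)$-swap of $v$ (when $(v_i,v_{i+1}) = (0,1)$), the identity becomes
\[
	[\Delta_i,x_i] - (x_{i+1}-x_i)\Delta_i - \Id \;=\; \xi_{i+1}\,\Delta_i - \Delta_i\,\xi_i ,
\]
more conveniently stated as $\Delta_i x_i - x_{i+1}\Delta_i - \Id = \xi_{i+1}\Delta_i - \Delta_i\xi_i$. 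We check it component by component, sorting the components by how the source and target vertices differ.

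\textbf{Same vertex.} If $v_i = v_{i+1}$, the component of $\Delta_i$ is $\partial_i$, and $\partial_i x_i - x_{i+1}\partial_i = \Id$ holds in $\cl{H}_r$. The right-hand side has no diagonal component, since each of $\xi_i,\xi_{i+1},\Delta_i$ either fixes the vertex or swaps entries, and a composite returning to $v$ would need two swaps involving $i$ or $i+1$. If $(v_i,v_{i+1}) = (1,0)$, then $\Delta_i$ has no component out of $R(v)$, so the left side is $-\Id$. On the right, $\xi_i$ has the component $R(v)\to R(w)$ given by $s_i$ (the case $j = i+1$ in Definition~\ref{def:nilHeckeActionOnC}, with empty $\theta$'s), and it is followed by the $\Delta_i$ component $s_i\colon R(w)\to R(v)$. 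This gives $-\Delta_i\xi_i = -s_is_i = -\Id$. If $(v_i,v_{i+1}) = (0,1)$, the left side is $\Delta_ix_i - x_{i+1}\Delta_i - \Id$ with $\Delta_i = s_i$ landing in $R(w)$, so on $R(v)$ it is just $-\Id$. On the right, $\xi_{i+1}\colon R(w)\to R(v)$ is $-s_i$ (the case $j=i<i+1$), which gives $\xi_{i+1}\Delta_i = -s_is_i = -\Id$.

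\textbf{Adjacent swap.} For the component $R(v)\to R(w)$ with $(v_i,v_{i+1}) = (0,1)$, the left side is $s_ix_i - x_{i+1}s_i = 0$, and the right side contributions should cancel or vanish by the same kind of bookkeeping.

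\textbf{Swaps with a third index $j\notin\{i,i+1\}$.} The remaining components come from $\xi_i$ or $\xi_{i+1}$ swapping $i$ or $i+1$ with some third index $j$, composed with $\Delta_i$. The targets are vertices differing from $v$ by a transposition involving $j$, possibly composed with the swap of $i,i+1$. For each such target there are at most two contributing composites, namely $\xi_{i+1}\Delta_i$ and $\Delta_i\xi_i$, and we must show they agree. This is the main obstacle.

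We split into the cases $j<i$ and $j>i+1$, and within each case over the values of $v_j,v_i,v_{i+1}$. When $j>i+1$, the $\xi_i$ component is $\theta_i\theta_{i+1}\cdots s_{j-1}\cdots\hat\theta_{i+1}\hat\theta_i$ and the $\xi_{i+1}$ component is the same word with the outer $\theta_i,\hat\theta_i$ removed. After accounting for the differing vertex labels, the required identity reduces to relations of the form $\partial_i\theta_i = 0$, $\partial_i s_i = -\partial_i$, $s_i\partial_i=\partial_i$, or $\hat\theta_i\partial_i = \ldots$, together with far commutativity of $\partial_i$ past the inner word. This is analogous to the computation in Lemma~\ref{lem:DeltaChainMap}. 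When $j<i$, the words end in $\ldots s_{i-1}\ldots$ or $\ldots s_i \ldots$, and we use the braid relations $s_{i-1}s_i\partial_{i-1}=\partial_is_{i-1}s_i$ etc., exactly as in the boxed computation of Lemma~\ref{lem:DeltaChainMap}. The sign conventions for $\xi$ (negative when $j<i$) must match, and checking this carefully is the delicate part.

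\textbf{Second identity.} The second relation follows from the first once we have Lemma~\ref{lem:XsCommute}. Indeed, the components of $\Delta_i$ commute with symmetric expressions: $\Delta_i(X_i+X_{i+1}) = (X_i+X_{i+1})\Delta_i$ can be checked directly by the same case analysis (it is simpler, since $x_i+x_{i+1}$ commutes with $s_i,\partial_i$). Subtracting the first identity from this gives $\Delta_iX_{i+1} = -\Id + X_i\Delta_i$.
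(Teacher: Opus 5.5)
Your reduction is exactly the paper's. With $X_i = x_i + \xi_i$, the first identity says that $\Delta_i\xi_i - \xi_{i+1}\Delta_i$ equals $\Id + x_{i+1}\Delta_i - \Delta_i x_i$. That map is the identity on $R(v)$ when $v_i \neq v_{i+1}$ and zero on every other component. Your two diagonal computations ($\Delta_i\xi_i = s_is_i$ at a $(1,0)$ vertex, $\xi_{i+1}\Delta_i = -s_is_i$ at a $(0,1)$ vertex) are the ones the paper uses.

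The gap is the part you yourself flag as the main obstacle: the off-diagonal components are never computed. Moreover, the mechanism you sketch for $j>i+1$ would not work. You cannot far-commute $\partial_i$ past $W = \theta_{i+1}\cdots\theta_{j-2}\,s_{j-1}\,\hat{\theta}_{j-2}\cdots\hat{\theta}_{i+1}$, because its outer letters lie in $\{s_{i+1},\partial_{i+1}\}$. Nothing of that kind is needed. Sorting by $(v_i,v_{i+1})$ at the source vertex $v$, the cases are:
\begin{itemize}
\item $(1,1)$, $j>i+1$, $v_j=0$: $\xi_i = s_i\,W\,\partial_i$ lands at a $(0,1)$ vertex. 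There $\Delta_i = s_i$ carries it to the same target as $\xi_{i+1}\Delta_i = W\,\partial_i$, and the two agree by $s_is_i=\Id$.
\item $(1,0)$, $j>i+1$, $v_j=0$: $\xi_i = \partial_i\,W\,s_i$ lands at a $(0,0)$ vertex, where $\Delta_i = \partial_i$, so $\Delta_i\xi_i = 0$. Also $\xi_{i+1}\Delta_i = 0$, since $\Delta_i$ vanishes on $R(v)$.
\item $(0,1)$, $j<i$, $v_j=1$: both composites carry the minus sign of $\xi$, so the signs are not delicate. One needs $\partial_i\,\theta_j\cdots\theta_{i-2}\,s_{i-1}\,\hat{\theta}_{i-2}\cdots\hat{\theta}_j = \theta_j\cdots\theta_{i-2}\,s_{i-1}\,s_i\,\partial_{i-1}\,\hat{\theta}_{i-2}\cdots\hat{\theta}_j\,s_i$. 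This follows from far commutativity, $s_is_i=\Id$, and $s_{i-1}s_i\partial_{i-1} = \partial_i s_{i-1}s_i$.
\item $(0,0)$, $j<i$, $v_j=1$: $\Delta_i\xi_i = 0$ because $\xi_i$ lands at a vertex with entries $(1,0)$ in positions $i,i+1$. Moving $\partial_i$ leftward by far commutativity, $\xi_{i+1}\Delta_i$ contains the factor $\partial_{i-1}\,s_i\,s_{i-1}\,\partial_i = \partial_{i-1}\,\partial_{i-1}\,s_i\,s_{i-1} = 0$.
\end{itemize}
No other components occur. Out of a vertex with $v_i=1$, the map $\xi_i$ only swaps $i$ with indices to its right; with $v_i=0$, only with indices to its left; and likewise for $\xi_{i+1}$ after $\Delta_i$. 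The adjacent-swap component of the right-hand side vanishes because no composite lands there, not by cancellation. Until this enumeration is carried out, the first identity has not been proved.

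For the second identity, your reduction to $[\Delta_i,X_i+X_{i+1}]=0$ is algebraically correct, but it is not a shortcut. Only the $x_i+x_{i+1}$ part commutes with $\Delta_i$ for free. Showing $[\Delta_i,\xi_i+\xi_{i+1}]=0$ involves the composites $\Delta_i\xi_{i+1}$ and $\xi_i\Delta_i$, none of which appeared in the first verification. That is the same amount of casework as checking $\Delta_iX_{i+1} = -\Id + X_i\Delta_i$ directly by the analogous enumeration, which is what the paper does. Lemma~\ref{lem:XsCommute} plays no role in either version.
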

\begin{proof}
	It is straightforward to compute that $\Id + \,x_{i+1}\,\Delta_i - \Delta_i\,x_i$ has the following component maps. If $v$ is a vertex with $v_i \neq v_{i+1}$, then its component map $R(v) \to R(v)$ is the identity, and all other component maps are zero. To show that $\Delta_i\,X_i = \Id + \,X_{i+1}\,\Delta_i$, we must show that $\Delta_i\,\xi_i - \xi_{i+1}\,\Delta_i$ has the same component maps. Let $v$ be a vertex. \begin{itemize}[noitemsep]
		\item[$\bullet$] Case $(v_i,v_{i+1}) = (1,1)$. Suppose $j$ is an index for which $i+1 < j$ and $v_j = 0$, and let $w$ be obtained from $v$ by swapping its $(i+1)$th and $j$th entries. It is straightforward to compute that the component maps of $\Delta_i\,\xi_i$ and $\xi_{i+1}\,\Delta_i$ from $R(v)$ to $R(w)$ are both $\theta_{i+1}\cdots\theta_{j-2}\,s_{j-1}\,\hat{\theta}_{j-2}\cdots\hat{\theta}_{i+1}\,\partial_i$, and that all of their other component maps with source $R(v)$ are zero. So $\Delta_i\,\xi_i - \xi_{i+1}\,\Delta_i$ is zero on $R(v)$. 
		\item[$\bullet$] Case $(v_i,v_{i+1}) = (1,0)$. Then $\xi_{i+1}\,\Delta_i$ is zero on $R(v)$. Let $j$ be an index for which $i +1 < j$ and $v_j = 0$, and let $w$ be obtained from $v$ by swapping its $i$th and $j$th entries. Then $\xi_i\colon R(v) \to R(w)$ is $\partial_i\,\theta_{i+1}\cdots\theta_{j-2}\,s_{j-1}\,\hat{\theta}_{j-2}\cdots\hat{\theta}_{i+1}\,s_i$. Since $\Delta_i\colon R(w) \to R(w)$ is $\partial_i$, we see that $\Delta_i\,\xi_i\colon R(v) \to R(w)$ is zero. As for $\Delta_i\,\xi_i\colon R(v) \to R(v)$, let $w$ be obtained from $v$ by swapping its $i$th and $(i+1)$th entries, and note that $\xi_i\colon R(v) \to R(w)$ and $\Delta_i\colon R(w) \to R(v)$ are both equal to $s_i$, so $\Delta_i\,\xi_i\colon R(v) \to R(v)$ is $s_i\,s_i = \Id$. All other components of $\Delta_i\,\xi_i$ with source $R(v)$ are zero. 
		\item[$\bullet$] Case $(v_i,v_{i+1}) = (0,1)$. Suppose $j$ is an index for which $j < i$ and $v_j = 1$, and let $w$ be obtained from $v$ by swapping its $j$th and $i$th entries. It is straightforward to compute that the component maps of $\Delta_i\,\xi_i$ and $\xi_{i+1}\,\Delta_i$ from $R(v)$ to $R(w)$ are $(\partial_i)\,(\theta_{j}\cdots\theta_{i-2}\,s_{i-1}\,\hat{\theta}_{i-2}\cdots\hat{\theta}_j)$ and $(\theta_j\cdots\theta_{i-2}\,s_{i-1}\,s_i\,\partial_{i-1}\,\hat{\theta}_{i-2}\cdots\hat{\theta}_j)\,(s_i)$, respectively, which are equal. All other component maps of $\Delta_i\,\xi_i$ with source $R(v)$ are zero. Letting $w$ be the vertex obtained from $v$ by swapping its $i$th and $(i+1)$th entries, we see that $\Delta_i\colon R(v) \to R(w)$ is $s_i$ and $\xi_{i+1}\colon R(w) \to R(v)$ is $-s_i$ so the component of $\Delta_i\,\xi_i - \xi_{i+1}\,\Delta_i$ from $R(v)$ to $R(v)$ is the identity as required.
		\item[$\bullet$] Case $(v_i,v_{i+1}) = (0,0)$. Let $j$ be an index for which $j < i$ and $v_j = 1$, and let $w$ be obtained from $v$ by swapping its $j$th and $(i+1)$th entries. Then $\xi_{i+1}\,\Delta_i\colon R(v) \to R(w)$ is $(\theta_j\cdots\theta_{i-2}\,\partial_{i-1}\,s_i\,s_{i-1}\,\hat{\theta}_{i-2}\cdots\hat{\theta}_j)\,(\partial_i) = 0$. All other components of $\xi_{i+1}\,\Delta_i$ and $\Delta_i\,X_i$ with source $R(v)$ are zero as well. 
	\end{itemize}
	This completes the proof that $\Delta_i\,X_i = \Id + \,X_{i+1}\,\Delta_i$. Similar reasoning shows that $\Delta_i\,X_{i+1} = -\Id+ \,X_i\,\Delta_i$ as well. 
\end{proof}

\begin{lem}\label{lem:farCommutativity}
	If $j \notin \{i,i+1\}$, then $\Delta_i$ and $X_j$ commute.
\end{lem}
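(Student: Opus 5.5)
Since $X_j = x_j + \xi_j$, I will prove $[\Delta_i,x_j]=0$ and $[\Delta_i,\xi_j]=0$ separately.

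The first commutator is immediate. Every component of $\Delta_i$ is either $\partial_i$ (from $R(v)$ to itself) or $s_i$ (from $R(v)$ to the vertex with entries $i,i+1$ swapped). For $j\notin\{i,i+1\}$, both $\partial_i$ and $s_i$ commute with multiplication by $x_j$ by far commutativity, so $[\Delta_i,x_j]=0$.

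For $[\Delta_i,\xi_j]$, I will argue componentwise, as in the proof of Lemma~\ref{lem:XChainMap}. Fix a source vertex $v$ and consider a nonzero component of $\xi_j$ from $R(v)$ to $R(w)$, where $w$ swaps $v_j$ with $v_l$ for some $l$. The component map is a word
\[
\pm\,\theta_a\cdots\theta_{b-2}\,s_{b-1}\,\hat\theta_{b-2}\cdots\hat\theta_a,
\]
where $\{a,b\}=\{j,l\}$ and each $\theta$ or $\hat\theta$ is determined by the entries of $v$ strictly between $a$ and $b$. I split into cases by the position of $\{i,i+1\}$ relative to the interval $[a,b]$.

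\emph{Disjoint case.} Suppose $\{i,i+1\}$ is disjoint from $[a-1,b]$. Then all subscripts in the word differ from $i$ by at least $2$, so the word commutes with $\partial_i$ and with $s_i$ by far commutativity. Moreover, swapping entries $i,i+1$ does not change which $\theta$'s appear. Hence both composites around the square agree.

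\emph{Boundary case.} Suppose $i+1=a$ or $i=b$; for example $i+1=j<l$. Then $\Delta_i$ and $\xi_j$ act on adjacent index positions, and the squares commute by the same kind of rewriting used for $\Delta_i$ commuting with $d$ in Lemma~\ref{lem:DeltaChainMap}. Concretely, I will push $\partial_i$ or $s_i$ past the factors with subscripts greater than $i+1$ using far commutativity. At the junction I will use the braid relations $\partial_i\,s_{i+1}\,s_i=s_{i+1}\,s_i\,\partial_{i+1}$, $s_i\,s_{i+1}\,\partial_i=\partial_{i+1}\,s_i\,s_{i+1}$ and their analogues. Note that since $j\notin\{i,i+1\}$, only the endpoint $l$ (not $j$) can coincide with $i$ or $i+1$.

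\emph{Interior case.} This is the main obstacle: $a<i<i+1<b$, so $\{i,i+1\}$ lies strictly inside the interval. Here $\Delta_i$ may change the pair $(v_i,v_{i+1})$, which changes the factors $\theta_{i-1},\theta_i$ and $\hat\theta_{i-1},\hat\theta_i$ in the word.

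\begin{itemize}
\item If $v_i=v_{i+1}$, then $\Delta_i$ acts by $\partial_i$ on both source and target. I will check that $\partial_i$ passes through the word using a boxed chain of braid relations, exactly as in Lemma~\ref{lem:DeltaChainMap}. With $(v_i,v_{i+1})=(1,1)$, for instance, the word contains $s_{i-1}\,s_i\cdots\partial_i\,\partial_{i-1}$, and $\partial_i$ migrates past it to become $\partial_{i+1}$ and then $\partial_i$ again. Each consecutive pair of equal-valued entries is handled by the identity $\hat\theta_{m,-1}\hat\theta_m\partial_{m-1}=\partial_m\hat\theta_{m,-1}\hat\theta_m$ from the proof of Lemma~\ref{lem:XChainMap}.
\item If $(v_i,v_{i+1})=(0,1)$, then the square pairs $\xi_j$ on $v$ followed by $s_i$ with $s_i$ followed by $\xi_j$ on the swapped vertex. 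The two words differ by conjugating the $\theta$-pattern at positions $i,i+1$. Equality follows from the braid relations $s_is_j\partial_i=\partial_js_is_j$ and $s_i\partial_js_i=s_j\partial_is_j$ for $|i-j|=1$.
\item If $(v_i,v_{i+1})=(1,0)$, then $\Delta_i$ is zero on $R(v)$. One must show that the other composite vanishes, that is, $\xi_j$ followed by $\Delta_i$ into a vertex with pattern $(1,0)$ is zero. I expect this to reduce to $\partial_i\partial_i=0$ after braid moves, as in the second-to-last case of Lemma~\ref{lem:XChainMap}.
\end{itemize}

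Finally, all components of $[\Delta_i,\xi_j]$ between vertices that differ in more than the allowed entries vanish automatically, because each operator changes at most one designated pair of entries. Checking the few sign conventions (for $j>l$, $\xi_j$ carries a minus sign) completes the plan.
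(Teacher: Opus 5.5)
Your overall plan matches the paper's: dispose of $[\Delta_i,x_j]$, then analyze $[\Delta_i,\xi_j]$ componentwise according to where $\{i,i+1\}$ sits relative to the swapped pair $\{j,l\}$. Your disjoint and interior cases are the paper's first two cases. The boundary case, however, has a genuine gap, and it is where the real content of the lemma lies.

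First, write $\{a,b\}=\{j,l\}$ with $a<b$. As written, your split omits the configurations $i=a$ (that is, $l=i<i+1<j$) and $i+1=b$ (that is, $j<i<i+1=l$). Your sample configuration $i+1=j$ is excluded by hypothesis, and the disjoint condition should read $\{i,i+1\}\cap[a,b]=\emptyset$. Second, and more seriously, no configuration with $l\in\{i,i+1\}$ can be handled as a commuting square built on a single component of $\xi_j$. The reason is that $\Delta_i$ changes which index $\xi_j$ swaps with $j$.

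For instance, take $j<i$ and $(v_i,v_{i+1})=(0,0)$. Let $w$ and $u$ be obtained from $v$ by swapping the $j$th entry with the $i$th, respectively $(i+1)$th, entry. Since $\Delta_i$ vanishes on $R(w)$ (pattern $(1,0)$), your square on $\xi_j\colon R(v)\to R(w)$ would assert $\xi_j\,\partial_i=0$ there. That is false: for $j=i-1$ it is $s_{i-1}\partial_i$. What actually happens is that the $R(v)\to R(w)$ component of $\Delta_i\xi_j$ runs through $R(u)$, so one needs
\[
	s_i\,(\theta_j\cdots\theta_{i-2}\,\partial_{i-1}\,s_i\,s_{i-1}\,\hat\theta_{i-2}\cdots\hat\theta_j) = (\theta_j\cdots\theta_{i-2}\,s_{i-1}\,\hat\theta_{i-2}\cdots\hat\theta_j)\,\partial_i,
\]
which follows from $s_i\partial_{i-1}s_i=s_{i-1}\partial_is_{i-1}$. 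In addition, the $R(v)\to R(u)$ component of $\xi_j\Delta_i$ must vanish on its own. This uses $s_is_{i-1}\partial_i=\partial_{i-1}s_is_{i-1}$ together with $\partial_{i-1}^2=0$.

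The other patterns behave the same way. For $(v_i,v_{i+1})=(0,1)$, the $l=i$ word out of $R(v)$ must be matched against the $l=i+1$ word out of the swapped vertex. For $(1,0)$, the only composite that is not obviously zero is $\partial_i\,(\theta_j\cdots\theta_{i-2}\,s_{i-1}\,s_i\,\partial_{i-1}\,\hat\theta_{i-2}\cdots\hat\theta_j)$, and it vanishes by $\partial_i^2=0$ after a braid move.

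This pairing of the $l=i$ and $l=i+1$ components of $\xi_j$ is the missing idea, and it is why the paper treats $l\in\{i,i+1\}$ jointly; the case $j>i+1$ is symmetric. By contrast, your interior $(1,0)$ subcase needs no computation. Interior components of $\xi_j$ preserve entries $i$ and $i+1$, and $\Delta_i$ is zero on every vertex with pattern $(1,0)$.
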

\begin{proof}
	It suffices to show that $[\Delta_i,\xi_j] = 0$. Let $v$ be a vertex, and suppose $l$ is an index for which $j \neq l$ and the subsequence of $v$ consisting of its $j$th and $l$th entries is $(1,0)$. So if $j < l$, then $(v_j,v_l) = (1,0)$ while if $l < j$, then $(v_l,v_j) = (1,0)$. \begin{itemize}[noitemsep]
		\item[$\bullet$] Case $i+1 < \min(j,l)$ or $\max(j,l) < i$. The restrictions of $\Delta_i\,\xi_j$ and $\xi_j\,\Delta_i$ to $R(v)$ are equal by far commutativity. 
		\item[$\bullet$] Case $\min(j,l) < i < i+1 < \max(j,l)$. Let $w$ be obtained from $v$ by swapping its $i$th and $(i+1)$th entries and its $j$th and $l$th entries. If $(v_i,v_{i+1}) = (1,1)$ and $j < l$, then the components of $\Delta_i\,\xi_j$ and $\xi_j\,\Delta_i$ from $R(v)$ to $R(w)$ are \begin{align*}
			\Delta_i\,\xi_j &= (\partial_i)\,(\theta_j\cdots\theta_{i-2}\,\partial_{i-1}\,\partial_i\,\theta_{i+1}\cdots\,\theta_{l-2}\,s_{l-1}\,\hat{\theta}_{l-2}\cdots\,\hat{\theta}_{i+1}\,s_i\,s_{i-1}\,\hat{\theta}_{i-2}\cdots\hat{\theta}_j)\\
			\xi_j\,\Delta_i &= (\theta_j\cdots\theta_{i-2}\,\partial_{i-1}\,\partial_i\,\theta_{i+1}\cdots\,\theta_{l-2}\,s_{l-1}\,\hat{\theta}_{l-2}\cdots\,\hat{\theta}_{i+1}\,s_i\,s_{i-1}\,\hat{\theta}_{i-2}\cdots\hat{\theta}_j)\,(\partial_i)
		\end{align*}
		which are equal by far commutativity and braid relations. All other cases where $v_i = v_{i+1}$ are handled similarly. If $(v_i,v_{i+1}) = (0,1)$ and $j < l$, the two composites are \begin{align*}
			\Delta_i\,\xi_j &= (s_i)\,(\theta_j\cdots\theta_{i-2}\,\partial_{i-1}\,s_i\,\theta_{i+1}\cdots\,\theta_{l-2}\,s_{l-1}\,\hat{\theta}_{l-2}\cdots\,\hat{\theta}_{i+1}\,s_i\,\partial_{i-1}\,\hat{\theta}_{i-2}\cdots\hat{\theta}_j)\\
			\xi_j\,\Delta_i &= (\theta_j\cdots\theta_{i-2}\,s_{i-1}\,\partial_i\,\theta_{i+1}\cdots\,\theta_{l-2}\,s_{l-1}\,\hat{\theta}_{l-2}\cdots\,\hat{\theta}_{i+1}\,\partial_i\,s_{i-1}\,\hat{\theta}_{i-2}\cdots\hat{\theta}_j)\,(s_i)
		\end{align*}
		which are again equal by far commutativity and braid relations. The case where $l < j$ is the same. Finally, if $(v_i,v_{i+1}) = (1,0)$, both composites are zero. 
		\item[$\bullet$] Case $j < i$ and $l \in \{i,i+1\}$. If $(v_i,v_{i+1}) = (0,0)$, then let $u$ be obtained from $v$ by swapping its $j$th and $(i+1)$th entries, and let $w$ be obtained from $v$ by swapping its $j$th and $i$th entries. Then $\Delta_i\,\xi_j\colon R(v) \to R(u)$ is zero while $\xi_j\,\Delta_i\colon R(v) \to R(u)$ is the composite of $\Delta_i\colon R(v) \to R(v)$ followed by $\xi_j\colon R(v) \to R(u)$, which is $(\theta_j\cdots\theta_{i-2}\,\partial_{i-1}\,s_i\,s_{i-1}\,\hat{\theta}_{i-2}\cdots\hat{\theta}_j)\,(\partial_i) = 0$. On the other hand, the $\Delta_i\,\xi_j\colon R(v) \to R(w)$ is the composite of $\xi_j\colon R(v) \to R(u)$ followed by $\Delta_i\colon R(u) \to R(w)$, while $\xi_j\,\Delta_i\colon R(v) \to R(w)$ is the composite of $\Delta_i\colon R(v) \to R(v)$ followed by $\xi_j\colon R(v) \to R(w)$. These two composites are the left- and right-hand sides of the following equation \[
			(s_i)\,(\theta_j\cdots\theta_{i-2}\,\partial_{i-1}\,s_i\,s_{i-1}\,\hat{\theta}_{i-2}\cdots\hat{\theta}_j) = (\theta_j\cdots\theta_{i-2}\,s_{i-1}\,\hat{\theta}_{i-2}\cdots\hat{\theta}_j)\,(\partial_{i}).
		\]If $(v_i,v_{i+1}) = (1,0)$, let $w$ be obtained from $v$ by swapping its $j$th and $(i+1)$th entries. Then $\xi_j\,\Delta_i\colon R(v) \to R(w)$ is zero while $\Delta_i\,\xi_j\colon R(v) \to R(w)$ is $(\partial_i)\,(\theta_j\cdots\theta_{i-2}\,s_{i-1}\,s_i\,\partial_{i-1}\,\hat{\theta}_{i-2}\cdots\hat{\theta}_j) = 0$. If $(v_i,v_{i+1}) = (0,1)$, let $w$ be obtained from $v$ by swapping its $j$th and $i$th entries. Then $[\Delta_i,\xi_j]\colon R(v) \to R(w)$ is zero because \[
			(\partial_i)\,(\theta_j\cdots\theta_{i-2}\,s_{i-1}\,\hat{\theta}_{i-2}\cdots\hat{\theta}_j) = (\theta_j\cdots\theta_{i-2}\,s_{i-1}\,s_i\,\partial_{i-1}\,\hat{\theta}_{i-2}\cdots\hat{\theta}_j)\,(s_i). 
		\]
		\item[$\bullet$] Case $l \in \{i,i+1\}$ and $i+1 < j$. This case is handled in the same way as the previous case. 
	\end{itemize}
	The analysis of these cases completes the argument. 
\end{proof}

\raggedright
\bibliography{freeloop}
\bibliographystyle{alpha}

\vfill

\textit{Department of Mathematics, Princeton University, Princeton NJ 08540}

\textit{School of Mathematics, Institute for Advanced Study, Princeton NJ 08540}

\textit{Email addresses:} \hspace{2pt} {joshuaxw@princeton.edu} \hspace{2pt} {jxwang@ias.edu}

\end{document}